\algrenewcommand{\algorithmicrequire}{\textbf{Input:}}
\algrenewcommand{\algorithmicensure}{\textbf{Output:}}
\NewDocumentCommand{\algorithmicor}{}{\ \textbf{or}\ }
\NewDocumentCommand{\IfThen}{m m}{\algorithmicif\ #1\ \algorithmicthen\ #2}
\DeclareMathOperator*{\argmax}{arg\,max}
\DeclareMathOperator{\finitefield}{GF}
\DeclareMathOperator{\order}{\mathit{O}}
\DeclareMathOperator{\rank}{rank}
\DeclareMathOperator{\sequence}{\mathit{S}}
\DeclareMathOperator{\confun}{\mathit{\lambda}}
\DeclareMathOperator{\galebasis}{gb}
\DeclareMathOperator{\pathwidth}{pw}
\DeclareMathOperator{\rankm}{\mathit{r}}
\DeclareMathOperator{\childz}{\textsf{child}_0}
\DeclareMathOperator{\childo}{\textsf{child}_1}
\DeclareMathOperator{\labeldd}{\textsf{label}}
\DeclareMathOperator{\rootdd}{\textsf{root}}
\newtheorem{theorem}{Theorem}[section]
\newtheorem{corollary}[theorem]{Corollary}
\newtheorem{lemma}[theorem]{Lemma}
\newtheorem{proposition}[theorem]{Proposition}
\theoremstyle{definition}
\newtheorem{example}[theorem]{Example}
\newtheorem{remark}[theorem]{Remark}
\NewDocumentEnvironment{myexample}{o}{\IfValueTF{#1}{\begin{example}[#1]}{\begin{example}}}{\qed\end{example}}
\NewDocumentEnvironment{myremark}{o}{\IfValueTF{#1}{\begin{remark}[#1]}{\begin{remark}}}{\qed\end{remark}}
\crefname{enumi}{}{}
\Crefname{enumi}{}{}
\setlist[enumerate]{align=left, leftmargin=*, noitemsep}
\setlist[itemize]{leftmargin=*, noitemsep}
\DeclarePairedDelimiterX{\set}[2]{\lbrace}{\rbrace}{#1\mathrel{:}#2}
\DeclarePairedDelimiterXPP{\fun}[2]{#1}{\lparen}{\rparen}{}{#2}
\DeclarePairedDelimiter{\pqty}{\lparen}{\rparen}
\DeclarePairedDelimiter{\bqty}{\lbrack}{\rbrack}
\DeclarePairedDelimiter{\Bqty}{\lbrace}{\rbrace}
\DeclarePairedDelimiter{\abs}{\lvert}{\rvert}
\NewDocumentCommand{\edgelabel}{m}{{\footnotesize #1}}
\tikzset{
  ddnode/.style={circle, draw, minimum width=0.5cm},
  reducearrow/.style={-Stealth, line width=1mm},
  terminal/.style={draw, rectangle},
  triangle/.style={anchor=apex, draw, isosceles triangle, minimum width=1cm, shape border rotate=90},
  zedge/.style={-Latex, dashed},
  oedge/.style={-Latex}
}
\def\ddsep{1.3}
\NewDocumentCommand{\qbinom}{m m}{\genfrac{[}{]}{0pt}{}{#1}{#2}}
\NewDocumentCommand{\galeorder}{m}{#1_{\mathrm{G}}}
\NewDocumentCommand{\gset}{}{E}
\NewDocumentCommand{\ei}{O{\preceq}}{\gset_{#1, i}}
\NewDocumentCommand{\eivar}{O{\preceq}}{\gset \setminus \ei[#1]}
\NewDocumentCommand{\leftjustified}{s}{\IfBooleanTF{#1}{\vartriangleleft}{\trianglelefteq}}
\NewDocumentCommand{\contraction}{m m}{#1 \mathrel{/} #2}
\NewDocumentCommand{\deletion}{m m}{#1 \setminus #2}
\NewDocumentCommand{\minor}{m m m}{\contraction{\deletion{#1}{#2}}{#3}}
\NewDocumentCommand{\minori}{m}{\minor{M}{\pqty{\ei \setminus #1}}{#1}}
\NewDocumentCommand{\restrict}{m m}{#1 | #2}
\NewDocumentCommand{\bases}{}{\mathcal{B}}
\NewDocumentCommand{\isets}{}{\mathcal{I}}
\NewDocumentCommand{\bdd}{s m o o}{\IfBooleanTF{#1}{\fun*}{\fun}{\textsf{B}}{#2\IfValueT{#3}{,\pqty{#3,#4}}}}
\NewDocumentCommand{\zdd}{s m o o}{\IfBooleanTF{#1}{\fun*}{\fun}{\textsf{Z}}{#2\IfValueT{#3}{,\pqty{#3,#4}}}}
\NewDocumentCommand{\labelroot}{s m}{\IfBooleanTF{#1}{\labeldd\pqty*{\rootdd\pqty*{#2}}}{\labeldd\pqty{\rootdd\pqty{#2}}}}
\title{On the sizes of BDDs and ZDDs representing matroids}
\author{Hiromi Emoto\thanks{Graduate School of Informatics, Kyoto University}\textsuperscript{\phantom{*},}\thanks{E-mail: \texttt{emoto.hiromi.63w@kyoto-u.jp}} \and Yuni Iwamasa\footnotemark[1]\textsuperscript{\phantom{*},}\thanks{E-mail: \texttt{iwamasa@i.kyoto-u.ac.jp}} \and Shin-ichi Minato\footnotemark[1]\textsuperscript{\phantom{*},}\thanks{E-mail: \texttt{minato@i.kyoto-u.ac.jp}}}
\date{\today}
\begin{document}
\maketitle

\begin{abstract}
  Matroids are often represented as oracles since there are no unified and compact representations for general matroids.
  This paper initiates the study of binary decision diagrams (BDDs) and zero-suppressed binary decision diagrams (ZDDs) as relatively compact data structures for representing matroids in a computer.
  This study particularly focuses on the sizes of BDDs and ZDDs representing matroids.
  First, we compare the sizes of different variations of BDDs and ZDDs for a matroid.
  These comparisons involve concise transformations between specific decision diagrams.
  Second, we provide upper bounds on the size of BDDs and ZDDs for several classes of matroids.
  These bounds are closely related to the number of minors of the matroid and depend only on the connectivity function or pathwidth of the matroid, which deeply relates to the classes of matroids called strongly pigeonhole classes.
  In essence, these results indicate upper bounds on the number of minors for specific classes of matroids and new strongly pigeonhole classes.
\end{abstract}
\begin{quote}
	{\bf Keywords: }
 Binary decision diagram, Zero-suppressed binary decision diagram, Matroid
\end{quote}

\section{Introduction}\label{sec:introduction}

\emph{Matroids}~\cite{b2d08917-c364-3337-af1c-57f8ed647c1c,23e983ed-3432-3b97-b3be-0381d88d5f62,97ddae92-4833-382b-b222-4cea46100541,37dbf464-6053-35fb-a8a6-c5f70977014e} are combinatorial structures that abstract linear independence in vector spaces.
This concept appears in various fields of mathematics and theoretical computer science such as graph theory, geometry, and combinatorial optimization~\cite{10.1093/acprof:oso/9780198566946.001.0001}.
In combinatorial optimization, for instance, matroids appear as the discrete structure behind several combinatorial optimization problems solvable in a greedy manner.

The formal definition of matroids is as follows.
Let \(\gset\) be a finite set and \(\isets \subseteq 2^{\gset}\) be a subset family of \(\gset\).
The pair \(\pqty{\gset, \isets}\) is called a \emph{matroid} if the following conditions \cref{item:i1,item:i2,item:i3} hold:
\begin{enumerate}[label=\textbf{(I\arabic*)}, ref=(I\arabic*)]
  \item\label{item:i1} \(\emptyset \in \isets\).
  \item\label{item:i2} If \(I \in \isets\) and \(I^\prime \subseteq I\), then \(I^\prime \in \isets\).
  \item\label{item:i3} For \(I_1, I_2 \in \isets\) with \(\abs{I_1} < \abs{I_2}\), there exists \(e \in I_2 \setminus I_1\) such that \(I_1 \cup \Bqty{e} \in \isets\).
\end{enumerate}
In this context, \(\gset\) is called the \emph{ground set} of \(M\), a member of \(\isets\) is called an \emph{independent set} of \(M\), and a maximal independent set is called a \emph{basis} of \(M\).

By \cref{item:i2}, the bases contain sufficient information to represent \(\isets\).
Indeed, the collection \(\bases\) of bases can characterize a matroid;
a subset family \(\bases \subseteq 2^{\gset}\) of the ground set \(\gset\) is the collection of bases of some matroid if and only if the following conditions \cref{item:b1,item:b2} hold:
\begin{enumerate}[label=\textbf{(B\arabic*)}, ref=(B\arabic*), series=base]
  \item\label{item:b1} \(\bases \neq \emptyset\).
  \item\label{item:b2} For \(B_1, B_2 \in \bases\) and \(x \in B_1 \setminus B_2\), there exists \(y \in B_2 \setminus B_1\) such that \(\pqty{B_1 \setminus \Bqty{x}} \cup \Bqty{y} \in \bases\).
\end{enumerate}
Hence the pair \(\pqty{\gset, \bases}\) is also referred to as a matroid.
For a matroid \(M\), we denote by \(\fun{\gset}{M}\), \(\fun{\isets}{M}\), and \(\fun{\bases}{M}\) the ground set, the collection of independent sets, and that of bases of \(M\), respectively.

The following are examples of matroids.

\begin{figure}[tb]
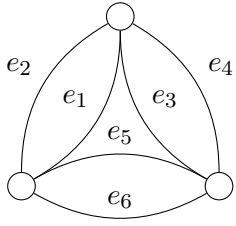

  \centering
  \tikz \graph {
    subgraph I_n [counterclockwise, empty nodes, nodes={draw, circle}, radius=1.5cm, V={a,b,c}];
    a --[bend left, "\(e_1\)"'] b;
    a --[bend right, "\(e_2\)"'] b;
    a --[bend right, "\(e_3\)"] c;
    a --[bend left, "\(e_4\)"] c;
    b --[bend left, "\(e_5\)"] c;
    b --[bend right, "\(e_6\)"] c;
  };
  \caption{An undirected graph \(G\)}
  \label{fig:transversal_not_laminar}
\end{figure}

\begin{myexample}[Cycle matroid]\label{eg:cycle}
  For an undirected graph \(G\), let \(\fun{V}{G}\) and \(\fun{E}{G}\) denote the vertex and edge sets of \(G\), respectively.
  Let \(\fun{\isets}{G}\) denote the family of edge subsets \(F \subseteq \fun{E}{G}\) such that the subgraph \(\pqty{\fun{V}{G}, F}\) of \(G\) is a forest.
  Then \(\pqty{\fun{E}{G}, \fun{\isets}{G}}\) forms a matroid, called the \emph{cycle matroid} \(\fun{M}{G}\) of \(G\).
  In this case, the collection \(\fun{\bases}{\fun{M}{G}}\) of bases of \(\fun{M}{G}\) is the family of edge subsets \(F \subseteq \fun{E}{G}\) such that \(F \in \fun{\isets}{G}\) and \(\abs{F} = \abs{\fun{V}{G}} - \omega\pqty{G}\), where \(\omega\pqty{G}\) is the number of connected components of \(G\).
  For example, we have \(\fun{E}{G} = \Bqty{e_1, \ldots, e_6}\), \(\fun{\bases}{\fun{M}{G}} = \set{B \subseteq \fun{E}{G}}{\text{\(\abs{B} = 2\), and \(B\) is not \(\Bqty{e_1, e_2}\), \(\Bqty{e_3, e_4}\), or \(\Bqty{e_5, e_6}\)}}\), and \(\fun{\isets}{G} = \fun{\bases}{\fun{M}{G}} \cup \Bqty{\emptyset, \Bqty{e_1}, \ldots, \Bqty{e_6}}\) for the undirected graph \(G\) in \Cref{fig:transversal_not_laminar}.
\end{myexample}

\begin{example}[\(\mathbb{F}\)-representable matroid]\label{eg:vector}
  Let \(A\) be an \(m \times \abs{\gset}\) matrix over a field \(\mathbb{F}\) with the set \(\gset\) of column labels and \(\isets\) be the family of subsets \(I \subseteq \gset\) such that the set of columns labeled by \(I\) in \(A\) is linearly independent in \(\mathbb{F}^m\).
  Then \(\pqty{\gset, \isets}\) forms a matroid, called the \emph{vector matroid} \(M \bqty{A}\) of \(A\).
  A matroid \(M\) is said to be \emph{\(\mathbb{F}\)-representable} if \(M = M \bqty{A}\) for some matrix \(A\) over \(\mathbb{F}\).
  In this case, \(\fun{\bases}{M \bqty{A}}\) is the family of subsets \(B \subseteq \gset\) such that \(B \in \isets\) and \(\abs{B} = \rank{A}\).
  For example, let \(\gset \coloneqq \Bqty{e_1, e_2, e_3, e_4}\), \(\bases \coloneqq \Bqty{\Bqty{e_1, e_2}, \Bqty{e_1, e_4}, \Bqty{e_2, e_4}}\), and \(\isets \coloneqq \bases \cup \Bqty{\emptyset, \Bqty{e_1}, \Bqty{e_2}, \Bqty{e_4}}\).
  Then \(\pqty{\gset, \isets}\) and \(\pqty{\gset, \bases}\) are \(\finitefield\pqty{2}\)-representable, where \(\finitefield\pqty{2}\) is the finite field of order \(2\).
  Indeed, they coincide with the vector matroid arising from the following matrix over \(\finitefield\pqty{2}\):
  \[
    \pushQED{\qed}
    \begin{pNiceMatrix}[first-row]
      e_1 & e_2 & e_3 & e_4 \\
      1 & 0 & 0 & 1 \\
      0 & 1 & 0 & 1
    \end{pNiceMatrix}\text{.}\qedhere
    \popQED
  \]
\end{example}

Several classes of matroids have polynomial representations in the sizes of their ground sets (see \cref{eg:cycle,eg:vector}).
However, since the number \(m_n\) of matroids whose size of the ground set is \(n\) satisfies that \(\log_2{\log_2{m_n}} \geq n - \frac{3}{2} \log_2{n} + \frac{1}{2} \log_2{\frac{2}{\pi}} - \fun{o}{1}\)~\cite{Bansal2015}, there are no unified and polynomial-size representations for general matroids.
In the field of combinatorial optimization, matroids are often represented as oracles as an indirect approach (see \cite{robinson_welsh_1980,Hausmann1981}).
However, when we actually solve a combinatorial optimization problem related to a matroid in a computer, we need to implement the oracle arising from the matroid.

In this paper, we initiate the study of \emph{binary decision diagrams} (\emph{BDDs})~\cite{10.1109/TC.1986.1676819} and \emph{zero-suppressed binary decision diagrams} (\emph{ZDDs})~\cite{10.1145/157485.164890} as relatively compact data structures for representing matroids uniformly.

BDDs and ZDDs are special types of directed acyclic graphs, which are obtained from binary decision trees by applying certain reductions (see \cref{sec:decision_diagrams} for the formal definition).
Informally, nodes in BDDs/ZDDs are labeled with the elements of the ground set, and a directed path in the graph represents a subset family of the ground set.
The size of a BDD/ZDD means the number of nodes in the graph.
The smaller its size, the better the compression.

In constructing a BDD/ZDD for a subset family of \(E\), we are required to fix a total order on \(E\), and the size of the BDD/ZDD heavily depends on the fixed total order.
For a totally ordered set \(\pqty{E, \preceq}\) and a set family \(\mathcal{S} \subseteq 2^{\gset}\), we denote by \(\bdd{\mathcal{S}}[E][\preceq]\) (resp. \(\zdd{\mathcal{S}}[E][\preceq]\)) the BDD (resp. ZDD) for \(\mathcal{S}\) with respect to \(\pqty{E, \preceq}\).
In some contexts, \(\bdd{\mathcal{S}}[E][\preceq]\) (resp. \(\zdd{\mathcal{S}}[E][\preceq]\)) itself means the family \(\mathcal{S}\).
We often abbreviate it to \(\bdd{\mathcal{S}}\) (resp. \(\zdd{\mathcal{S}}\)).
This study focuses on various properties, especially the size, of BDDs/ZDDs representing a collection of independent sets or bases of a matroid.

\subsection*{Our contribution}
\addcontentsline{toc}{subsection}{Our contribution}

In \cref{sec:structural_relations}, we compare different variations of BDDs/ZDDs for a matroid.
Since BDDs and ZDDs have different reduction rules, their structures can differ even when they represent the same set family.
In addition, as mentioned above, a single matroid \(M\) can be represented through various families such as the collection of independent sets \(\fun{\isets}{M}\) and bases \(\fun{\bases}{M}\).
The structure of the decision diagram can differ depending on the choices.

In this paper, we show the following relations on the sizes of the decision diagrams.
Here \(M^\ast\) denotes the \emph{dual} matroid of \(M\), whose ground set is \(\fun{\gset}{M}\) and whose collection of bases is \(\set{\fun{\gset}{M} \setminus B}{B \in \fun{\bases}{M}}\).

\begin{theorem}\label{thm:intro_structure}
  Let \(M\) be a matroid on the ground set \(\gset\) and \(\preceq\) be a total order on \(\gset\).
  Then the following hold:
  \begin{itemize}
    \item The sizes of both the ZDD \(\zdd{\fun{\bases}{M}}\) and the BDD \(\bdd{\fun{\isets}{M}}\) are never greater than that of the BDD \(\bdd{\fun{\bases}{M}}\) (proved by \cref{thm:clutter_zdd_and_clutter_bdd,thm:isets_and_bases}).
    \item The BDDs \(\bdd{\fun{\bases}{M}}\) and \(\bdd{\fun{\bases}{M^\ast}}\) are of equal size (proved by \cref{thm:clutter_bdd}).
    \item The ZDDs \(\zdd{\fun{\isets}{M}}\) and \(\zdd{\fun{\bases}{M}}\) and the BDD \(\bdd{\fun{\isets}{M^\ast}}\) all have the same size (proved by \cref{thm:isets_and_bases,thm:bases_zdd_and_dual_isets_bdd}).
  \end{itemize}
\end{theorem}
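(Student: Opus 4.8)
The plan is to read \cref{thm:intro_structure} as a bookkeeping corollary of four structural facts and to assemble them. The single observation that drives everything is that $\fun{\bases}{M}$ is a \emph{clutter} (an inclusion antichain): all bases share the cardinality $\rank M$, since if $B_1, B_2 \in \fun{\bases}{M}$ had $\abs{B_1} < \abs{B_2}$, then \cref{item:i3} applied to these two independent sets would extend $B_1$, contradicting its maximality. Consequently no basis contains another, which is exactly the hypothesis needed to invoke the clutter-specific comparison and duality lemmas below. I fix the order $\preceq$ throughout and describe each diagram by the \emph{cofactors} it records: for a prefix of in/out decisions on an initial segment of $\pqty{\gset, \preceq}$, the cofactor is the subfamily of completions, and the nodes of a reduced diagram are in bijection with the cofactors surviving its reduction rule.

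For the first bullet I prove two inequalities. For $\abs{\zdd{\fun{\bases}{M}}} \le \abs{\bdd{\fun{\bases}{M}}}$ (\cref{thm:clutter_zdd_and_clutter_bdd}) the key point is that the clutter property rules out the only situation in which zero-suppression keeps a node that redundancy reduction deletes: a node deleted by the BDD rule but kept by the ZDD rule would have its two branch-cofactors equal and nonempty, hence would exhibit two completions differing only in the current element, i.e.\ two comparable members of $\fun{\bases}{M}$, which is impossible in a clutter. Packaging this into an injection from ZDD nodes to BDD nodes gives the bound. For $\abs{\bdd{\fun{\isets}{M}}} \le \abs{\bdd{\fun{\bases}{M}}}$ (\cref{thm:isets_and_bases}) I use that $\fun{\isets}{M}$ is downward closed, so at every node the $1$-cofactor is contained in the $0$-cofactor; this extra structure only forces further coincidences of cofactors, which the BDD reduction removes, so the independent-set BDD is never larger than the basis BDD.

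For the second bullet I use the complementation symmetry of BDDs (\cref{thm:clutter_bdd}). Replacing every member $S$ of a family by $\gset \setminus S$ is realized on a BDD by swapping the $0$- and $1$-edge of every node uniformly, and this operation preserves both reduction conditions, hence the node count. Since $\fun{\bases}{M^\ast} = \set{\gset \setminus B}{B \in \fun{\bases}{M}}$, this yields $\abs{\bdd{\fun{\bases}{M}}} = \abs{\bdd{\fun{\bases}{M^\ast}}}$. For the third bullet I chain two equalities: first $\abs{\zdd{\fun{\isets}{M}}} = \abs{\zdd{\fun{\bases}{M}}}$, again from \cref{thm:isets_and_bases} and reflecting a bijection between the cofactors surviving zero-suppression for the downward closure and for its maximal antichain; then $\abs{\zdd{\fun{\bases}{M}}} = \abs{\bdd{\fun{\isets}{M^\ast}}}$ from \cref{thm:bases_zdd_and_dual_isets_bdd}.

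I expect the last identity, \cref{thm:bases_zdd_and_dual_isets_bdd}, to be the main obstacle, since it crosses both the diagram type and the matroid: it equates the zero-suppressed diagram of the clutter $\fun{\bases}{M}$ with the redundancy-reduced diagram of the downward-closed family $\fun{\isets}{M^\ast}$. The natural route is to compose the basis-complementation bijection $B \mapsto \gset \setminus B$ with the observation that zero-suppression on a clutter and redundancy reduction on the dual downward-closed family prune nodes at exactly the complementary decision positions, so that the surviving cofactors match up one to one. The clutter comparison and the complementation symmetry are comparatively routine once the cofactor viewpoint is in place; it is this cross-diagram, cross-matroid matching that needs the most care.
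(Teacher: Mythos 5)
Your top-level decomposition coincides with the paper's: \cref{thm:intro_structure} is assembled from \cref{thm:clutter_zdd_and_clutter_bdd,thm:clutter_bdd,thm:isets_and_bases,thm:bases_zdd_and_dual_isets_bdd}, and your treatment of the two purely clutter-theoretic ingredients is sound. The observation that bases form a clutter, that the clutter property forbids any \cref{item:node_deletion_bdd} deletion (so \(\bdd{\fun{\bases}{M}}\) is a ZDD in which \cref{item:node_deletion_zdd} has not been fully applied, giving \(\abs{\zdd{\fun{\bases}{M}}} \leq \abs{\bdd{\fun{\bases}{M}}}\)), and that uniformly swapping \(0\)- and \(1\)-arcs realizes complementation while preserving both BDD reduction rules (giving \(\abs{\bdd{\fun{\bases}{M}}} = \abs{\bdd{\fun{\bases}{M^\ast}}}\)) are exactly the arguments behind \cref{lem:clutter_bdd,thm:clutter_zdd_and_clutter_bdd,thm:clutter_bdd}.

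The gap is in every step that relates \(\fun{\isets}{M}\) to \(\fun{\bases}{M}\). You justify \(\abs{\bdd{\fun{\isets}{M}}} \leq \abs{\bdd{\fun{\bases}{M}}}\) and \(\abs{\zdd{\fun{\isets}{M}}} = \abs{\zdd{\fun{\bases}{M}}}\) by asserting that passing from a clutter to its downward closure ``only forces further coincidences of cofactors'' and induces ``a bijection between the cofactors surviving zero-suppression''. Neither assertion holds for a general clutter, so no argument that uses only downward-closedness can succeed. The paper's own counterexample (\cref{fig:counterexample_of_isets_and_bases}) makes this concrete: for \(\mathcal{C} = \Bqty{\Bqty{1,2},\Bqty{1,4},\Bqty{3,4}}\) the ZDD of \(\mathcal{C}\) has four nodes while the ZDD of its downward closure has five (two distinct nodes labelled \(2\) appear), so the sizes genuinely diverge once the exchange axiom is unavailable. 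What the paper actually does in \cref{thm:isets_and_bases} is exhibit an explicit local transformation --- redirect every \(0\)-arc whose head is \(\bot\) to the head of the corresponding \(1\)-arc --- and the entire content of the proof is that the transformed diagram represents exactly \(\fun{\isets}{M}\); this is where matroid theory enters, via the lexicographically largest basis containing a given independent set together with axiom \cref{item:b2_ast}. The same mechanism, with \cref{item:b2} and lexicographically smallest bases, is what proves the cross-dual identity \(\abs{\zdd{\fun{\bases}{M}}} = \abs{\bdd{\fun{\isets}{M^\ast}}}\) of \cref{thm:bases_zdd_and_dual_isets_bdd}, which you correctly single out as the hard step but leave open. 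In short, your cofactor bookkeeping carries the second bullet and half of the first, but the first and third bullets cannot be completed without locating and using basis exchange.
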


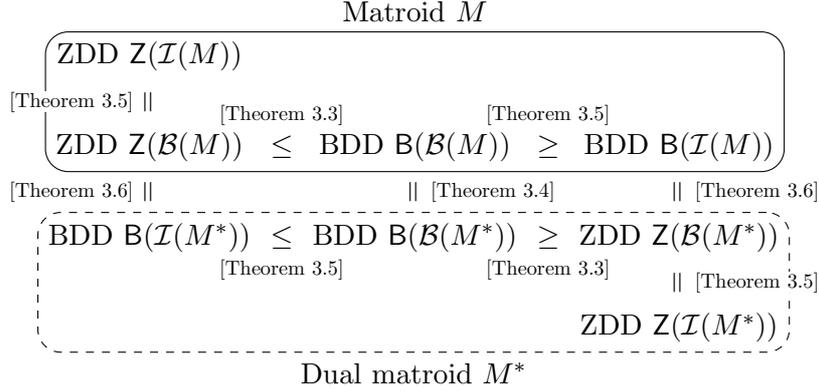
\begin{figure}[tb]
  \centering
  \begin{tikzpicture}
    \matrix [matrix of nodes] {
      |(zdd_i)| ZDD \(\zdd{\fun{\isets}{M}}\) & & & & \\
      |(zdd_i-to-zdd_b)| \rotatebox[origin=c]{90}{\(=\)} & & & & \\
      |(zdd_b)| ZDD \(\zdd{\fun{\bases}{M}}\) & |(zdd_b-to-bdd_b)| \(\leq\) & |(bdd_b)| BDD \(\bdd{\fun{\bases}{M}}\) & |(bdd_b-to-bdd_i)| \(\geq\) & |(bdd_i)| BDD \(\bdd{\fun{\isets}{M}}\) \\
      |(zdd_b-to-bdd_dual_i)| \rotatebox[origin=c]{90}{\(=\)} & & |(bdd_b-to-bdd_dual_b)| \rotatebox[origin=c]{90}{\(=\)} & & |(bdd_i-to-zdd_dual_b)| \rotatebox[origin=c]{90}{\(=\)} \\
      |(bdd_dual_i)| BDD \(\bdd{\fun{\isets}{M^\ast}}\) & |(bdd_dual_i-to-bdd_dual_b)| \(\leq\) & |(bdd_dual_b)| BDD \(\bdd{\fun{\bases}{M^\ast}}\) & |(bdd_dual_b-to-zdd_dual_b)| \(\geq\) & |(zdd_dual_b)| ZDD \(\zdd{\fun{\bases}{M^\ast}}\) \\
      & & & & |(zdd_dual_i-to-zdd_dual_b)| \rotatebox[origin=c]{90}{\(=\)} \\
      & & & & |(zdd_dual_i)| ZDD \(\zdd{\fun{\isets}{M^\ast}}\) \\
    };
    \node [draw, fit=(bdd_i)(bdd_b)(zdd_i)(zdd_b), inner sep=0cm, rounded corners=0.3cm] (matroid) {};
    \node [anchor=south] at (matroid.north) {Matroid \(M\)};
    \node [draw, dashed, fit=(zdd_dual_b)(zdd_dual_i)(bdd_dual_b)(bdd_dual_i), inner sep=0cm, rounded corners=0.3cm] (dual_matroid) {};
    \node [anchor=north] at (dual_matroid.south) {Dual matroid \(M^\ast\)};
    \begin{scope}[every node/.style={inner sep=0cm, scale=0.7}]
      \node [anchor=east, fill=white] at (zdd_i-to-zdd_b.west) {[\Cref{thm:isets_and_bases}]};
      \node [anchor=south] at (zdd_b-to-bdd_b.north) {[\Cref{thm:clutter_zdd_and_clutter_bdd}]};
      \node [anchor=south] at (bdd_b-to-bdd_i.north) {[\Cref{thm:isets_and_bases}]};
      \node [anchor=east] at (zdd_b-to-bdd_dual_i.west) {[\Cref{thm:bases_zdd_and_dual_isets_bdd}]};
      \node [anchor=west] at (bdd_b-to-bdd_dual_b.east) {[\Cref{thm:clutter_bdd}]};
      \node [anchor=west] at (bdd_i-to-zdd_dual_b.east) {[\Cref{thm:bases_zdd_and_dual_isets_bdd}]};
      \node [anchor=north] at (bdd_dual_i-to-bdd_dual_b.south) {[\Cref{thm:isets_and_bases}]};
      \node [anchor=north] at (bdd_dual_b-to-zdd_dual_b.south) {[\Cref{thm:clutter_zdd_and_clutter_bdd}]};
      \node [anchor=west, fill=white] at (zdd_dual_i-to-zdd_dual_b.east) {[\Cref{thm:isets_and_bases}]};
    \end{scope}
  \end{tikzpicture}
  \caption{Relations of the sizes between different decision diagrams}
  \label{fig:dd_repr_matroid}
\end{figure}

The results in \cref{thm:intro_structure} are summarized in \cref{fig:dd_repr_matroid}.
The proof of this theorem involves concise transformations between specific decision diagrams.
In this paper, the four types of BDDs/ZDDs representing either a collection of independent sets or bases of a matroid will be referred to as BDDs/ZDDs for a matroid.

In \cref{sec:width}, we provide upper bounds on the size (or the \emph{width}) of BDDs/ZDDs in terms of the value of the \emph{connectivity function} for several classes of matroids.
Now let \(E\) be a set \(\Bqty{e_1, \ldots, e_n}\) and \(\preceq\) be a total order on \(E\) such that \(e_1 \prec \cdots \prec e_n\).
For \(i \in \Bqty{0, \ldots, n - 1}\), the \emph{width of the \(i\)th level} (\emph{\(i\)th width}) of a BDD/ZDD for a subset family of \(E\) with respect to \(\pqty{E, \preceq}\) is defined as the number of nodes labeled with \(e_{i + 1}\).
The \emph{width} of a BDD/ZDD is defined as the maximum width among all levels.
For a matroid \(M\), its \emph{rank function} \(\rankm_M \colon 2^{\fun{\gset}{M}} \to \mathbb{N}\) is defined by \(\rankm_{M}\pqty{X} \coloneqq \max\set{\abs{I}}{\text{\(I \subseteq X\) and \(I \in \isets\)}}\) for \(X \subseteq \fun{\gset}{M}\).
The \emph{connectivity function}~\cite{tutte_1966,SEYMOUR198825} \(\lambda_M \colon 2^{\fun{\gset}{M}} \to \mathbb{N}\) of \(M\) is defined by
\[
  \confun_{M}\pqty{X} \coloneqq \rankm_{M}\pqty{X} + \rankm_{M}\pqty{\fun{\gset}{M} \setminus X} - \rankm\pqty{M}
\]
for \(X \subseteq \fun{\gset}{M}\).

\begin{theorem}\label{thm:intro_width}
  Let \(\gset\) be a set \(\Bqty{e_1, \ldots, e_n}\), \(M\) be a matroid on \(\gset\), and \(\preceq\) be a total order on \(\gset\) such that \(e_1 \prec \cdots \prec e_n\).
  We denote \(\Bqty{e_1, \ldots, e_i}\) by \(\ei\).
  Then the following hold for the \(i\)th width of a BDD/ZDD for \(M\) with respect to \(\pqty{\gset, \preceq}\):
  \begin{itemize}
    \item If \(M\) is free, then the \(i\)th width is at most \(1\) (proved by \cref{cor:width_free}).
    \item If \(M\) is uniform, then the \(i\)th width is at most \(\confun_{M}\pqty{\ei} + 1\) (proved by \cref{thm:width_uniform}).
    \item If \(M\) is a partition or nested matroid, then the \(i\)th width is at most \(2^{\confun_{M}\pqty{\ei}}\) (proved by \cref{thm:width_partition,thm:width_nested}).
  \end{itemize}
\end{theorem}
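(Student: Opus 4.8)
The statement collects four width bounds, and the strategy I would use to prove them (the content carried out by the cited results) is to reduce each bound to a single count: the number of distinct \emph{minors} of $M$ that arise across the separation $\pqty{\ei, \eivar}$. The common starting observation is that every node at the $i$th level of a reduced BDD or ZDD corresponds to a distinct nontrivial residual set family obtained after fixing the membership of $e_1, \ldots, e_i$. If $A \subseteq \ei$ records the elements chosen to lie in the set, then for the independent-set representation this residual family is exactly $\fun{\isets}{\minori{A}}$, since $A \cup X \in \fun{\isets}{M}$ with $X \subseteq \eivar$ holds iff $X \in \fun{\isets}{\minori{A}}$; dependent $A$ yield the empty family and collapse to a terminal, so only independent $A$ produce live nodes. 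An analogous computation identifies the residual family of the bases representation with $\fun{\bases}{\minori{A}}$ whenever $A$ extends to a basis inside $\eivar$. Because the reduction rules of both BDDs and ZDDs only merge nodes carrying identical residual families (or eliminate trivial ones), the $i$th width of any of the four decision diagrams for $M$ is at most the number of distinct minors $\minori{A}$ over $A \subseteq \ei$; it therefore suffices to bound this number in each class.

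Next I would dispatch the two symmetric classes. For a free matroid every $A \subseteq \ei$ is independent and every $\minori{A}$ is again free on $\eivar$, so there is a single minor and the width is at most $1$ (consistent with $\confun_M\pqty{\ei} = 0$). For a uniform matroid the minor $\minori{A}$ is uniform and is determined by its rank, which equals $\min\Bqty{\rankm\pqty{M} - \abs{A},\, \abs{\eivar}}$; hence $\minori{A}$ depends only on $\abs{A}$. Counting the feasible cardinalities $\abs{A}$ — those with $A$ independent and extendable within $\eivar$ — and matching the count against $\confun_M\pqty{\ei} = \rankm_M\pqty{\ei} + \rankm_M\pqty{\eivar} - \rankm\pqty{M}$ shows the number of live minors is exactly $\confun_M\pqty{\ei} + 1$.

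The partition case is where the connectivity enters through a product. Writing $\gset$ as a disjoint union of blocks $P_1, \ldots, P_k$ with capacities $d_j$, the minor $\minori{A}$ is again a partition matroid on $\eivar$ whose residual capacities are $d_j - \abs{A \cap P_j}$; thus the state is the vector of these residual capacities over the blocks split by the separation. For block $j$ put $c_j \coloneqq \rankm_M\pqty{\ei \cap P_j} + \rankm_M\pqty{\pqty{\eivar} \cap P_j} - d_j$, so that $\confun_M\pqty{\ei} = \sum_j c_j$. A direct count of the feasible values of $\abs{A \cap P_j}$ gives exactly $c_j + 1$ possibilities per block, whence the number of distinct minors is $\prod_j \pqty{c_j + 1}$. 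The bound then follows from the elementary inequality $c_j + 1 \le 2^{c_j}$, which yields $\prod_j \pqty{c_j + 1} \le 2^{\sum_j c_j} = 2^{\confun_M\pqty{\ei}}$.

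The nested case is the main obstacle, and I expect it to require the most work. Here the minors $\minori{A}$ are again nested matroids, but contraction and deletion interact with the defining chain in a less symmetric way than for partition matroids, so the state is no longer a simple per-block capacity vector. The plan is to encode a nested matroid by its chain of cyclic flats (equivalently, by a monotone lattice-path / staircase presentation), to describe explicitly how contracting $A$ and deleting $\ei \setminus A$ transforms this staircase on $\eivar$, and to isolate the coordinates of the staircase that can actually vary with $A$ across the separation. The crux is to show that the number of such varying coordinates — equivalently, the number of independent binary choices distinguishing the reachable minors — is bounded by $\confun_M\pqty{\ei}$, so that at most $2^{\confun_M\pqty{\ei}}$ distinct minors arise. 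Establishing this correspondence between the free coordinates of the staircase and the connectivity, rather than the counting inequality itself, is the delicate part.
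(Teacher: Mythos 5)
Your reduction of all four bullets to counting distinct minors of \(M\) on \(\eivar\), and your handling of the free, uniform, and partition cases, follow essentially the same route as the paper: it likewise identifies sub-diagrams at level \(i\) with minors on \(\eivar\), shows a uniform matroid has exactly \(\confun_{M}\pqty{\ei} + 1\) such minors (\cref{thm:width_uniform}), proves a product formula for direct sums (\cref{lem:direct_sum_width}), and derives the partition bound from \(c + 1 \leq 2^{c}\) (\cref{thm:width_partition}). Up to that point your proposal is sound.

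The gap is the nested case, which you yourself flag as the crux and then leave as a plan rather than a proof: the step you defer --- establishing that the ``varying coordinates'' of your cyclic-flat/staircase encoding number at most \(\confun_{M}\pqty{\ei}\) --- is precisely the mathematical content of that bullet, and nothing in your sketch guarantees it can be carried out. The paper closes this differently (\cref{lem:order_in_nested,thm:width_nested}): it fixes a left-justified order \(\leftjustified\) of \(M\), under which a set of size \(\rankm\pqty{M}\) is a basis iff it lies below the Gale basis \(\galebasis\pqty{M, \leftjustified}\) in the Gale order; it shows every minor of a nested matroid is again determined by its Gale basis with respect to the same \(\leftjustified\); and it then proves the key sandwich \(\galebasis\pqty{\contraction{M}{\ei}, \leftjustified} \subseteq \galebasis\pqty{\minori{X}, \leftjustified} \subseteq \galebasis\pqty{\deletion{M}{\ei}, \leftjustified}\) for every \(X \subseteq \ei\), via a delicate element-by-element analysis showing \(\galebasis\pqty{\contraction{M}{\Bqty{e}}, \leftjustified} \subseteq \galebasis\pqty{\deletion{M}{\Bqty{e}}, \leftjustified}\). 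Since the two bounding Gale bases differ in size by \(\rankm_{M}\pqty{\eivar} - \pqty{\rankm\pqty{M} - \rankm_{M}\pqty{\ei}} = \confun_{M}\pqty{\ei}\), at most \(2^{\confun_{M}\pqty{\ei}}\) Gale bases, hence minors, can occur. This monotonicity is where the single-chain structure of nested matroids is genuinely used --- the paper shows the analogous bound \emph{fails} for transversal and laminar matroids (\cref{thm:width_transversal,thm:width_laminar}) --- so some argument of this strength is indispensable, and your proposal does not yet supply one.
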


\begin{figure}[tb]
  \centering
  \begin{tikzpicture}[scale=0.9, every node/.style={transform shape}]
    \def\refscale{0.7}
    \NewDocumentCommand{\classnode}{m m}{%
      \begin{tabular}{@{}c@{}} #1 \\[-3pt] \scalebox{\refscale}{[#2]} \end{tabular}}
    \tikzset{framestyle/.style={draw, ellipse}}
    \node (free_width) {\classnode{\(1\)}{\Cref{cor:width_free}}};
    \node [fit=(free_width), framestyle, inner sep=-0.08cm] (free_frame) {};
    \node [fill=white] at (free_frame.north) (free_text) {Free};
    \node [above=-0.2cm of free_text] (uniform_width) {\classnode{\(\confun\pqty{\ei} + 1\)}{\Cref{thm:width_uniform}}};
    \node [framestyle, fit=(free_frame)(uniform_width), inner sep=-0.1cm] (uniform_frame) {};
    \node [below=-0.03cm, fill=white] at (uniform_frame.north) {Uniform};
    \node [left=1.3cm of uniform_frame, outer sep=-0.55cm] (partition_width) {\classnode{\(2^{\confun\pqty{\ei}}\)}{\Cref{thm:width_partition}}};
    \node [framestyle, fit=(uniform_frame)(partition_width), inner sep=-0.25cm] (partition_frame) {};
    \node [right=-0.2cm, fill=white] at (partition_frame.north west) {Partition};
    \node [right=1.3cm of uniform_frame, outer sep=-0.55cm] (nested_width) {\classnode{\(2^{\confun\pqty{\ei}}\)}{\Cref{thm:width_nested}}};
    \node [framestyle, fit=(uniform_frame)(nested_width), inner sep=-0.25cm] (nested_frame) {};
    \node [left=-0.2cm, fill=white] at (nested_frame.north east) {Nested};
    \node [anchor=south, outer sep=-0.05cm] at ([yshift=2.3cm] uniform_frame.center) (transversal_width) {Unbounded \scalebox{\refscale}{[\Cref{thm:width_transversal}]}};
    \node [framestyle, inner sep=-0.45cm, fit=(partition_frame)(nested_frame)(transversal_width)] (transversal_frame) {};
    \node [fill=white] at (transversal_frame.north) {Transversal};
    \node [anchor=north, outer sep=-0.1cm] at ([yshift=-2.4cm] uniform_frame.center) (laminar_width) {Unbounded \scalebox{\refscale}{[\Cref{thm:width_laminar}]}};
    \node [framestyle, inner sep=-0.45cm, fit=(partition_frame)(nested_frame)(laminar_width)] (laminar_frame) {};
    \node [fill=white] at (laminar_frame.south) {Laminar};
  \end{tikzpicture}
  \caption{Upper bounds on the \(i\)th width depending only on \(\confun\pqty{\ei}\)}
  \label{fig:width_repr_matroid}
\end{figure}

These results, along with others (\cref{thm:width_transversal,thm:width_laminar}), are summarized in \cref{fig:width_repr_matroid}.
The proof of this theorem utilizes the fact that the number of minors of \(M\) on \(\eivar\) bounds the \(i\)th width.
Actually, we show \cref{thm:intro_width} by providing upper bounds on the number of minors on \(\eivar\) for specific classes of matroids.
Classes of matroids for which the number of minors can be bounded with their connectivity functions are called strongly pigeonhole classes~\cite[Definition 2.10]{Funk2022}, and some classes are known to be strongly pigeonhole~\cite{Funk2023}.
Our results reveal new strongly pigeonhole classes.

While the total order \(\preceq\) has been arbitrary so far, certain orders may result in relatively smaller sizes of BDDs/ZDDs.
We show that, for a partition or nested matroid, a certain total order results in an exponentially smaller upper bound on the width than that given in \cref{thm:intro_width}.
Here, for a matroid \(M\), the \emph{pathwidth}~\cite[Section 4]{GEELEN2006405} \(\pathwidth\pqty{M}\) of \(M\) on an \(n\)-element set \(\gset\) is defined by
\[
  \pathwidth\pqty{M} \coloneqq \min\set{\max\set{\confun_{M}\pqty{\ei}}{i \in \Bqty{1, \ldots, n}}}{\text{\(\preceq\) is a total order on \(\gset\)}}\text{.}
\]

\begin{theorem}\label{thm:intro_pathwidth}
  There exists a total order on the ground set such that the width of the BDD/ZDD for a partition or nested matroid \(M\) is at most \(\pathwidth\pqty{M} + 1\) (proved by \cref{thm:pathwidth_partition,thm:pathwidth_nested}).
\end{theorem}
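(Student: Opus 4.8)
The plan is to prove the two cases, \cref{thm:pathwidth_partition,thm:pathwidth_nested}, separately but through a common mechanism. Recall from the discussion preceding \cref{thm:intro_width} that the \(i\)th width of any BDD/ZDD for \(M\) is bounded by the number of distinct minors of \(M\) with ground set \(\eivar\), namely those of the form \(\minori{S}\) as \(S\) ranges over the independent sets contained in \(\ei\) (we contract the chosen elements and delete the rejected ones). Hence, for a cleverly chosen total order \(\preceq\), it suffices to establish two facts: (a) the number of such distinct minors at each level \(i\) is at most \(\confun_{M}\pqty{\ei} + 1\); and (b) \(\max_i \confun_{M}\pqty{\ei} = \pathwidth\pqty{M}\), i.e.\ the chosen order is pathwidth-optimal. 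Combining (a) and (b) yields a width of at most \(\pathwidth\pqty{M} + 1\). The conceptual point behind (a) is that, for the right order, these minors form a \emph{chain} (parameterised by the number \(\abs{S}\) of selected elements, equivalently by a residual capacity) rather than a Boolean lattice; this is exactly what turns the bound \(2^{\confun_{M}\pqty{\ei}}\) of \cref{thm:intro_width} into the linear bound \(\confun_{M}\pqty{\ei} + 1\).

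For a partition matroid with blocks \(B_1, \ldots, B_k\) of capacities \(c_1, \ldots, c_k\), I would take any total order in which each block is consecutive. The connectivity function then splits as a sum over blocks, \(\confun_{M}\pqty{X} = \sum_j \confun^{(j)}\pqty{\abs{X \cap B_j}}\), where each summand is unimodal in \(\abs{X \cap B_j}\), vanishes when the block is untouched or fully taken, and attains its maximum value \(\min\pqty{c_j, \abs{B_j} - c_j}\) in between. For a block-consecutive order every prefix meets at most one block partially, so all but one summand vanish and \(\max_i \confun_{M}\pqty{\ei} = \max_j \min\pqty{c_j, \abs{B_j} - c_j}\); since any order must drive each \(\abs{X \cap B_j}\) from \(0\) to \(\abs{B_j}\) and hence pass through the peak of the largest summand, this value is a lower bound for every order, giving (b). For (a), while processing a prefix \(P\) of the active block \(B_j\), the residual minor on \(\eivar\) depends only on the residual capacity \(c_j - \abs{S \cap B_j}\) capped by \(\abs{B_j \setminus P}\); counting the distinct capped values yields exactly \(\confun_{M}\pqty{\ei} + 1\), which is a short computation.

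For a nested matroid I would use its description as a chain laminar matroid: a chain \(L_1 \subsetneq \cdots \subsetneq L_m = \gset\) with strictly increasing capacities \(c_1 < \cdots < c_m\) and independence given by \(\abs{I \cap L_t} \le c_t\) for all \(t\). The analogue of the block-consecutive order is the \emph{inside-out} order that lists the layers \(L_1, L_2 \setminus L_1, \ldots, L_m \setminus L_{m-1}\) in increasing order. Its decisive feature is that every prefix \(\ei\) is contained in the current layer's set \(L_s\), so each still-open constraint \(L_t\) (\(t \ge s\)) sees \(\abs{S \cap L_t} = \abs{S}\); consequently the residual minor on \(\eivar\) depends on the single parameter \(\abs{S}\) and is again chain laminar, with capacities \(c_t - \abs{S}\) capped below by \(0\) and above by the residual layer sizes. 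As in the partition case, counting the distinct capped profiles gives (a), and for (b) I would prove by an exchange argument that interleaving layers can only raise some prefix's connectivity, together with a matching lower bound from the connectivity forced at the transition between consecutive layers.

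The main obstacle I expect is the nested case. Establishing (a) there requires care in the capping analysis: one must show that, despite the interaction of the nested constraints, the distinct residual profiles are linearly ordered and number exactly \(\confun_{M}\pqty{\ei} + 1\), which amounts to matching the collapse of the \(\abs{S}\)-values (at both the lower and the upper cap) to the value \(\rankm_{M}\pqty{\ei} + \rankm_{M}\pqty{\eivar} - \rankm\pqty{M}\). Proving the optimality (b) of the inside-out order is the other delicate point, since the connectivity function of a chain laminar matroid is not additive across layers and the clean sum-of-trapezoids argument of the partition case is unavailable; here an uncrossing/exchange argument on prefixes is needed. Finally, since \cref{thm:intro_structure} and the minor-counting bound apply uniformly to all four BDD/ZDD variants, and since passing from independent sets to bases only discards the non-extendable states and never increases the count, the bound transfers to every representation of \(M\) without extra work.
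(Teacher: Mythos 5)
Your overall strategy---bound the \(i\)th width by the number of minors on \(\eivar\), pick an order making those minors a chain so the count is \(\confun_{M}\pqty{\ei}+1\), and separately certify that the order is pathwidth-optimal---is exactly the paper's, and your partition-matroid argument (block-consecutive order, additivity of \(\confun\) over blocks, at most one partially-met block per prefix, lower bound from the peak \(\min\Bqty{r_j, n_j - r_j}\) of the widest block) coincides with the proof of \cref{thm:pathwidth_partition} via \cref{thm:width_uniform,lem:direct_sum_width}. Where you genuinely diverge is the nested case: the paper works with a \emph{left-justified order} and the Gale-order machinery, showing via \cref{lem:order_in_nested} that every minor on \(\eivar\) is determined by its Gale basis, that these Gale bases are nested suffixes \(\Bqty{g_{\rankm\pqty{M}-r'+1},\ldots,g_{\rankm\pqty{M}}}\) of \(\galebasis\pqty{M,\leftjustified}\), and hence that the minors are in bijection with their ranks, an interval of length \(\confun_{M}\pqty{\ei}+1\); the pathwidth lower bound comes from a direct rank count around the positions \(e_{\rankm\pqty{M}-p+1},\ldots,e_{\rankm\pqty{M}+p}\). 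Your route instead uses the chain-of-capacities description and an ``inside-out'' layer order (essentially the reverse of the paper's left-justified order, which is harmless since \(\confun_{M}\pqty{X}=\confun_{M}\pqty{\gset\setminus X}\) makes an order and its reverse equally good). Your observation that every prefix sits between consecutive layers, so the residual minor depends only on \(\abs{S}\), is a correct and arguably more transparent substitute for the Gale-basis suffix lemma. However, the two steps you defer are not routine polish---they are the entire content of the paper's argument: (a) you still must show that two values of \(\abs{S}\) yielding residual matroids of equal rank yield the \emph{same} matroid (this is what collapses the \(\rankm_{M}\pqty{\ei}+1\) capacity profiles down to \(\confun_{M}\pqty{\ei}+1\) distinct minors; in the paper it is exactly the ``Gale basis is determined by the rank'' claim), and (b) the pathwidth lower bound, which the paper obtains by an explicit counting argument rather than an uncrossing/exchange argument, and which you would need to carry out for the capacity description. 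So: same skeleton, same partition proof, a genuinely different and workable formalism for the nested case, but with its two load-bearing lemmas stated rather than proved.
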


\section{Preliminaries}

Let \(2^E\) denote the power set of a set \(E\), and \(\mathbb{N}\) the set of all non-negative integers.

\subsection{Matroids}

In addition to the definitions of matroids in \cref{sec:introduction}, a subset family \(\bases \subseteq 2^{\gset}\) of the ground set \(\gset\) is the collection of bases of some matroid if and only if it satisfies \cref{item:b1} and the following:
\begin{enumerate}[resume*=base]
  \item\label{item:b2_ast} For \(B_1, B_2 \in \bases\) and \(x \in B_2 \setminus B_1\), there exists \(y \in B_1 \setminus B_2\) such that \(\pqty{B_1 \setminus \Bqty{y}} \cup \Bqty{x} \in \bases\).
\end{enumerate}

For a matroid \(M\), an element \(e \in \fun{\gset}{M}\) is called a \emph{loop} if \(\Bqty{e}\) is not independent and called a \emph{coloop} if every basis of \(M\) contains \(e\).
Recall from \cref{sec:introduction} that \(\rankm_M \colon 2^{\fun{\gset}{M}} \to \mathbb{N}\) denotes the rank function of \(M\).
We often abbreviate \(\rankm_{M}\pqty{\fun{\gset}{M}}\) to \(\rankm\pqty{M}\) and refer to it as the \emph{rank} of \(M\).
For \(X \subseteq E\), we write \(\rankm_{M}\pqty{X}\) as \(\rankm\pqty{X}\) if \(M\) is clear from the context.

Some operations applied to matroids produce different matroids.
Let \(M\) be a matroid on the ground set \(\gset\).
For \(X \subseteq \gset\), let \(\restrict{\isets}{X}\) be the collection of independent sets included in \(X\).
Then the pair \(\pqty{X, \restrict{\isets}{X}}\) forms a matroid, called the \emph{restriction} \(\restrict{M}{X}\) of \(M\) to \(X\) or \emph{deletion} \(\deletion{M}{\pqty{\gset \setminus X}}\) of \(\gset \setminus X\) from \(M\).
Similarly, let \(\isets_X\) be the collection of independent sets \(I\) such that \(I \cup B \in \fun{\isets}{M}\) for some \(B \in \fun{\bases}{\restrict{M}{X}}\).
Then the pair \(\pqty{\gset \setminus X, \isets_X}\) forms a matroid, called the \emph{contraction} \(\contraction{M}{X}\) of \(X\) from \(M\).
It is known that the resulting matroid is uniquely determined regardless of the order of deletions and contractions, that is, \(\contraction{\pqty{\deletion{M}{X}}}{Y} = \deletion{\pqty{\contraction{M}{Y}}}{X}\) for disjoint \(X, Y \subseteq \gset\).
Thus a matroid obtained from \(M\) by a sequence of deletions and contractions can be represented as \(\contraction{\deletion{M}{X}}{Y}\), which is called the \emph{minor} of \(M\) on \(\gset \setminus \pqty{X \cup Y}\).
Let \(M_1, M_2\) be matroids such that \(\fun{\gset}{M_1} \cap \fun{\gset}{M_2} = \emptyset\).
Then \(\pqty{\fun{\gset}{M_1} \cup \fun{\gset}{M_2}, \set{I_1 \cup I_2}{\text{\(I_1 \in \fun{\isets}{M_1}\) and \(I_2 \in \fun{\isets}{M_2}\)}}}\) forms a matroid, called the \emph{direct sum} \(M_1 \oplus M_2\) of \(M_1\) and \(M_2\).

In \cref{sec:introduction}, cycle matroid and \(\mathbb{F}\)-representable matroid are mentioned as classes of matroids.
The following are other classes of matroids appearing in this paper.

\begin{myexample}[Free matroid]\label{eg:free}
  A matroid \(M\) is called \emph{free} if its collection of independent sets is \(2^{\fun{\gset}{M}}\).
\end{myexample}

\begin{myexample}[Uniform matroid and partition matroid]\label{eg:partition}
  A matroid \(M\) is called \emph{uniform} if its collection of independent sets is \(\set{I \subseteq \fun{\gset}{M}}{\abs{I} \leq \rankm\pqty{M}}\).
  A uniform matroid of rank \(r\) on an \(n\)-element set is denoted by \(U_{r, n}\).
  Moreover, a matroid that can be expressed as a direct sum of uniform matroids is called a \emph{partition matroid}.
\end{myexample}

\begin{myexample}[Transversal matroid and nested matroid]\label{eg:nested}
  A matroid \(M\) is called \emph{transversal}~\cite[Section 1]{Edmonds1965} if there exists a sequence \(\pqty{A_1, \ldots, A_m}\) of subsets of \(\fun{\gset}{M}\) such that, for all \(I \subseteq \fun{\gset}{M}\), \(I\) is independent in \(M\) if and only if there is an injection \(\varphi \colon I \to \Bqty{1, \ldots, m}\) satisfying \(e \in A_{\varphi\pqty{e}}\) for every \(e \in I\).
  Such a sequence \(\pqty{A_1, \ldots, A_m}\) is called a \emph{presentation} of \(M\).
  In particular, \(M\) is called \emph{nested}~\citetext{\citealp[Section 8]{Crapo1965}; \citealp[Section 2]{Bonin2008}} if there exists a presentation \(\pqty{A_1, \ldots, A_m}\) such that \(A_1 \subseteq \cdots \subseteq A_m\).
  Note that nested matroids are also called \emph{Schubert matroids}~\cite[Section 3]{Sohoni1999}, \emph{shifted matroids}~\cite[Section 4]{ARDILA200349}, \emph{generalized Catalan matroids}~\cite[Definition 3.7]{BONIN200363}, \emph{freedom matroids}~\cite[Section 5]{CRAPO20051066}, and \emph{PI-matroids}~\cite[Appendix A.1]{BILLERA20091727} (see \cite[Section 4]{BONIN2006701}).
\end{myexample}

\begin{myexample}[Laminar matroid]\label{eg:laminar}
  A subset family \(\mathcal{A} \subseteq 2^E\) of a set \(E\) is called \emph{laminar} if \(A \subseteq B\) or \(B \subseteq A\) for all \(A, B \in \mathcal{A}\) with \(A \cap B \neq \emptyset\).
  A matroid \(M\) is called \emph{laminar} (see \cite{FIFE2017206}) if there exists a laminar family \(\mathcal{A} \subseteq 2^{\fun{\gset}{M}}\) and a function \(c \colon \mathcal{A} \to \mathbb{N}\) such that, for all \(I \subseteq \fun{\gset}{M}\), \(I\) is independent in \(M\) if and only if \(\abs{I \cap A} \leq \fun{c}{A}\) for every \(A \in \mathcal{A}\).
\end{myexample}

\subsection{BDDs and ZDDs}\label{sec:decision_diagrams}

\emph{Binary decision diagrams} (\emph{BDDs})~\cite{10.1109/TC.1986.1676819} and \emph{zero-suppressed binary decision diagrams} (\emph{ZDDs})~\cite{10.1145/157485.164890} are directed acyclic graphs under specific conditions and represent Boolean functions and set families, respectively.
Note that a Boolean function can be viewed as a set family if the set of true variables in an assignment of input variables for which the function evaluates to true belongs to the family.

\begin{figure}[tb]
  \centering
  \begin{tikzpicture}
    \node [terminal] at (2, 0) (bot) {\(\bot\)};
    \node [terminal] at (4, 0) (top) {\(\top\)};
    \foreach \index in {1,2,...,4}
      \node [ddnode] at ({2*(\index-1)}, \ddsep) (node3\index) {\(3\)};
    \draw [zedge] (node31.335) -- (top.north) node [above, pos=0.06] {\edgelabel{0}};
    \draw [oedge] (node31.305) -- (bot.north) node [below, at start] {\edgelabel{1}};
    \draw [zedge] (node32.south west) -- (bot.north) node [left, near start] {\edgelabel{0}};
    \draw [oedge] (node32.south east) -- (bot.north) node [right, near start] {\edgelabel{1}};
    \draw [zedge] (node33) -- (top.north) node [right, pos=0.2] {\edgelabel{0}};
    \draw [oedge] (node33) -- (bot.north) node [left, at start] {\edgelabel{1}};
    \draw [zedge] (node34.205) -- (top.north) node [above, very near start] {\edgelabel{0}};
    \draw [oedge] (node34.235) -- (top.north) node [below, at start] {\edgelabel{1}};
    \foreach \index [
      evaluate=\index as \l using int(2*\index-1),
      evaluate=\index as \r using int(2*\index)
    ] in {1,2} {
      \node [ddnode] at ({1+4*(\index-1)}, 2*\ddsep) (node2\index) {\(2\)};
      \draw [zedge] (node2\index) -- (node3\l) node [left, very near start] {\edgelabel{0}};
      \draw [oedge] (node2\index) -- (node3\r) node [right, very near start] {\edgelabel{1}};
    }
    \node [ddnode] at (3, 3*\ddsep) (node11) {\(1\)};
    \draw [zedge] (node11) -- (node21) node [left, at start] {\edgelabel{0}};
    \draw [oedge] (node11) -- (node22) node [right, at start] {\edgelabel{1}};
  \end{tikzpicture}
  \caption{A binary decision tree for \(\Bqty{\emptyset, \Bqty{1}, \Bqty{1, 2}, \Bqty{1, 2, 3}}\)}
  \label{fig:decision_tree}
\end{figure}
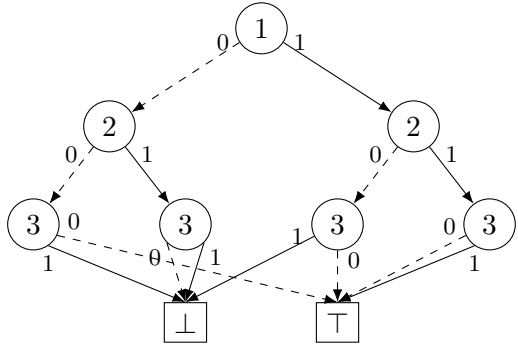

Let \(E\) be a finite set, \(\preceq\) be a total order on \(E\), and \(\mathcal{S} \subseteq 2^E\) be a subset family of \(E\).
The BDD \(\bdd{\mathcal{S}}[E][\preceq]\) and ZDD \(\zdd{\mathcal{S}}[E][\preceq]\) for \(\mathcal{S}\) with respect to \(\pqty{E, \preceq}\) are defined as follows.
Given the family \(\mathcal{S}\) and the totally ordered set \(\pqty{E, \preceq}\), a binary decision tree corresponding to them is uniquely determined.
For example, when \(\mathcal{S}\) is \(\Bqty{\emptyset, \Bqty{1}, \Bqty{1, 2}, \Bqty{1, 2, 3}}\) and \(\pqty{E, \preceq}\) is \(\pqty{\Bqty{1, 2, 3}, \leq}\), \cref{fig:decision_tree} is the corresponding binary decision tree.
The directed acyclic graph obtained by applying the following reduction rules to the binary decision tree is a BDD or ZDD.
The applied rules differentiate BDDs and ZDDs.

As a common condition, the directed acyclic graph has at most two nodes with outdegree zero, called \emph{terminals}.
One of two terminals is called the \emph{0-terminal} \(\bot\) and the other the \emph{1-terminal} \(\top\).
Each non-terminal node \(\nu\) has a label \(\labeldd\pqty{\nu}\) representing an element of \(E\) and exactly two arcs whose tails are \(\nu\).
These arcs are called the \emph{0-arc} and \emph{1-arc}, and their heads are denoted by \(\nu_0\) and \(\nu_1\), respectively.
Since there is a unique 1-arc (resp. 0-arc) whose tail is the same as a given 0-arc (resp. 1-arc), in this paper, it is referred to as the corresponding 1-arc (resp. 0-arc).
The \emph{size} of a BDD/ZDD is the number of non-terminal nodes in it.

\begin{figure}[tb]
  \centering
  \begin{subcaptionblock}{\textwidth}
    \centering
    \begin{tikzpicture}
      \def\newddsep{1.5}
      \node [triangle] at (0, 0) (zddll) {};
      \node [triangle] at (\newddsep, 0) (zddlr) {};
      \node [ddnode] at (0, 1) (nodell) {\(e\)};
      \draw [oedge] (0, 2) -- (nodell);
      \draw [zedge] (nodell) -- (zddll.apex) node [left, near start] {\edgelabel{0}};
      \draw [oedge] (nodell) -- (zddlr.apex) node [right, at start] {\edgelabel{1}};
      \node [ddnode] at (\newddsep, 1) (nodelr) {\(e\)};
      \draw [oedge] (\newddsep, 2) -- (nodelr);
      \draw [zedge] (nodelr) -- (zddll.apex) node [left, at start] {\edgelabel{0}};
      \draw [oedge] (nodelr) -- (zddlr.apex) node [right, near start] {\edgelabel{1}};
      \coordinate (midy) at ($(zddlr.lower side)!0.5!(\newddsep,2)$);
      \draw [reducearrow] ($(midy)+(1,0)$) -- ($(midy)+(2,0)$);
      \node [triangle] at (3+\newddsep, 0) (zddrl) {};
      \node [triangle] at (3+2*\newddsep, 0) (zddrr) {};
      \node [ddnode] at ($(zddrl.apex)!0.5!(zddrr.apex)+(0,1)$) (noder) {\(e\)};
      \draw [oedge] ($(zddrl.apex)+(0,2)$) -- (noder);
      \draw [oedge] ($(zddrr.apex)+(0,2)$) -- (noder);
      \draw [zedge] (noder) -- (zddrl.apex) node [left, very near start] {\edgelabel{0}};
      \draw [oedge] (noder) -- (zddrr.apex) node [right, very near start] {\edgelabel{1}};
    \end{tikzpicture}
    \caption{Node sharing}\label{fig:node_sharing}
    \vspace{0.5\baselineskip}
  \end{subcaptionblock}
  \begin{subcaptionblock}{0.49\textwidth}
    \centering
    \begin{tikzpicture}
      \node [triangle] at (0, 0) (zddl) {};
      \node [ddnode] at (0, 1) (nodel) {\(e\)};
      \draw [oedge] (0, 2) -- (nodel);
      \draw [zedge] (nodel.south west) -- (zddl.apex) node [left, near start] {\edgelabel{0}};
      \draw [oedge] (nodel.south east) -- (zddl.apex) node [right, near start] {\edgelabel{1}};
      \coordinate (midy) at ($(zddl.lower side)!0.5!(0,2)$);
      \draw [reducearrow] ($(midy)+(1,0)$) -- ($(midy)+(2,0)$);
      \node [triangle] at (3, 0) (zddr) {};
      \draw [oedge] (3, 2) -- (zddr.apex);
    \end{tikzpicture}
    \caption{Node deletion for BDDs}\label{fig:node_deletion_bdd}
  \end{subcaptionblock}
  \begin{subcaptionblock}{0.49\textwidth}
    \centering
    \begin{tikzpicture}
      \node [triangle] at (0, 0) (zddl) {};
      \node [anchor=south, terminal] at ($(zddl.lower side)+(1,0)$) (bot) {\(\bot\)};
      \node [ddnode] at (0.5, 1) (nodel) {\(e\)};
      \draw [oedge] (0.5, 2) -- (nodel);
      \draw [zedge] (nodel.south west) -- (zddl.apex) node [left, very near start] {\edgelabel{0}};
      \draw [oedge] (nodel.south east) -- (bot.north) node [right, very near start] {\edgelabel{1}};
      \coordinate (midy) at ($(bot.south)!0.5!(1,2)$);
      \draw [reducearrow] ($(midy)+(1,0)$) -- ($(midy)+(2,0)$);
      \node [triangle] at (4, 0) (zddr) {};
      \draw [oedge] (4, 2) -- (zddr.apex);
    \end{tikzpicture}
    \caption{Node deletion for ZDDs}\label{fig:node_deletion_zdd}
  \end{subcaptionblock}
  \caption{Reduction rules for BDDs and ZDDs}
\end{figure}

The reduction rules are as follows.
\Cref{item:node_sharing} is the common rule as shown in \cref{fig:node_sharing}:
\begin{enumerate}[label=\textbf{(NS)}, ref=(NS)]
  \item\label{item:node_sharing} Non-terminal nodes \(\nu, \nu^\prime\) whose labels are the same and which satisfy both \(\nu_0 = \nu^\prime_0\) and \(\nu_1 = \nu^\prime_1\) must be merged into a single node.
\end{enumerate}
Additionally, BDDs (resp. ZDDs) must satisfy the following rule \cref{item:node_deletion_bdd} (resp. \cref{item:node_deletion_zdd}) as illustrated in \cref{fig:node_deletion_bdd} (resp. \cref{fig:node_deletion_zdd}):
\begin{enumerate}[label=\textbf{(B-ND)}, ref=(B-ND)]
  \item\label{item:node_deletion_bdd} A non-terminal node \(\nu\) with \(\nu_0 = \nu_1\) must be removed, and the arcs whose head is \(\nu\) must be redirected to \(\nu_0\) (equal to \(\nu_1\)).
\end{enumerate}
\vspace{-\baselineskip}
\begin{enumerate}[label=\textbf{(Z-ND)}, ref=(Z-ND)]
  \item\label{item:node_deletion_zdd} A non-terminal node \(\nu\) with \(\nu_1 = \bot\) must be removed, and the arcs whose head is \(\nu\) must be redirected to \(\nu_0\).
\end{enumerate}
Removed nodes are referred to as redundant nodes.
The resulting BDD/ZDD is uniquely determined regardless of the order in which these rules are applied\footnote{Actually, a BDD where the reduction rules are fully applied is originally called a \emph{reduced ordered binary decision diagram} (\emph{ROBDD}), and a directed acyclic graph obtained by applying the reduction rules to the binary decision tree partially is also originally called a BDD. However, it is common to refer to ROBDDs as BDDs, and this paper follows that convention.}, though a BDD and ZDD representing the same set family may not have the same structure.

\begin{figure}[tb]
  \centering
  \begin{subcaptionblock}{0.49\textwidth}
    \centering
    \begin{tikzpicture}
      \node [terminal] at (0, 0) (bot) {\(\bot\)};
      \node [terminal] at (2, 0) (top) {\(\top\)};
      \foreach \id in {2,3,...,5}
        \node [ddnode] at (2, {(6-\id)*\ddsep}) (node\id1) {\(\id\)};
      \node [ddnode] at (0, 3*\ddsep) (node32) {\(3\)};
      \node [ddnode] at (-1, 4*\ddsep) (node22) {\(2\)};
      \node [ddnode] at (1, 5*\ddsep) (node11) {\(1\)};
      \draw [zedge] (node11) -- (node22) node [left, pos=-0.02] {\edgelabel{0}};
      \draw [oedge] (node11) -- (node21) node [right, at start] {\edgelabel{1}};
      \draw [zedge] (node21) -- (bot.north) node [left, pos=0.02] {\edgelabel{0}};
      \draw [oedge] (node21) -- (node31) node [right, near start] {\edgelabel{1}};
      \draw [zedge] (node22) -- (node32) node [right, at start] {\edgelabel{0}};
      \draw [oedge] (node22) -- (bot.north) node [left, pos=0.04] {\edgelabel{1}};
      \draw [zedge] (node31) -- (node41) node [right, near start] {\edgelabel{0}};
      \draw [oedge] (node31) -- (bot.north) node [left, at start] {\edgelabel{1}};
      \draw [zedge] (node32) -- (node51) node [right, at start] {\edgelabel{0}};
      \draw [oedge] (node32) -- (bot.north) node [left, pos=0.04] {\edgelabel{1}};
      \draw [zedge] (node41) -- (node51) node [right, near start] {\edgelabel{0}};
      \draw [oedge] (node41) -- (bot.north) node [left, at start] {\edgelabel{1}};
      \draw [zedge] (node51) -- (bot.north) node [left, pos=-0.04] {\edgelabel{0}};
      \draw [oedge] (node51) -- (top.north) node [right, near start] {\edgelabel{1}};
    \end{tikzpicture}
    \caption{BDD}
  \end{subcaptionblock}
  \begin{subcaptionblock}{0.49\textwidth}
    \centering
    \begin{tikzpicture}
      \node [terminal] at (0, 0) (bot) {\(\bot\)};
      \node [terminal] at (2, 0) (top) {\(\top\)};
      \foreach \x/\id in {2/5,0/4,2/2,1/1}
        \node [ddnode] at (\x, {(6-\id)*\ddsep}) (node\id) {\(\id\)};
      \draw [zedge] (node1) -- (node4) node [left, pos=0.02] {\edgelabel{0}};
      \draw [oedge] (node1) -- (node2) node [right, at start] {\edgelabel{1}};
      \draw [zedge] (node2) -- (bot.north) node [left, pos=0.02] {\edgelabel{0}};
      \draw [oedge] (node2) -- (node5) node [right, pos=0.04] {\edgelabel{1}};
      \draw [zedge] (node4.290) -- (node5) node [left, near start] {\edgelabel{0}};
      \draw [oedge] (node4.340) -- (node5) node [right, at start] {\edgelabel{1}};
      \draw [zedge] (node5) -- (bot.north) node [left, at start] {\edgelabel{0}};
      \draw [oedge] (node5) -- (top.north) node [right, near start] {\edgelabel{1}};
    \end{tikzpicture}
    \caption{ZDD}
  \end{subcaptionblock}
  \caption{Decision diagrams for \(\Bqty{\Bqty{1, 2, 5}, \Bqty{4, 5}, \Bqty{5}}\)}\label{fig:dd_examples}
\end{figure}

A directed path in the BDD/ZDD represents a subset (family) of \(E\).
Now there is only one node with indegree zero, called the \emph{root}.
We denote the root of a BDD \(\bdd{\mathcal{S}}\) (resp. ZDD \(\zdd{\mathcal{S}}\)) as \(\rootdd\pqty{\bdd{\mathcal{S}}}\) (resp. \(\rootdd\pqty{\zdd{\mathcal{S}}}\)).
Moreover, for all non-terminal nodes \(\nu\), both \(\labeldd\pqty{\nu} \prec \labeldd\pqty{\nu_0}\) and \(\labeldd\pqty{\nu} \prec \labeldd\pqty{\nu_1}\) are satisfied, where \(\labeldd\pqty{\bot}\) and \(\labeldd\pqty{\top}\) are special labels that are larger than any element of \(E\).
These conditions ensure that the sequence of elements in any directed path from the root to a terminal is strictly monotonically increasing with respect to \(\preceq\).
In particular, a path from the root to 1-terminal is called a \emph{1-path}, and a set (family) in \(\mathcal{S}\) is represented by a 1-path.
Precisely, for ZDDs, a set corresponding to a 1-path is defined as the label set of the tails of the traversed 1-arcs, and such a set is considered to belong to \(\mathcal{S}\).
The same holds for BDDs, but a single 1-path often represents multiple sets due to \cref{item:node_deletion_bdd}.
A 1-path in the ZDD represents the set
\[
  \set{e \in E}{\text{\(e\) is the tail label of some 1-arc in the 1-path}}\text{.}
\]
On the other hand, that in the BDD represents the family of subsets \(X \subseteq E\) such that
\[
  \set{e \in E}{\text{\(e\) is the tail label of some 1-arc in the 1-path}} \subseteq X
\]
and \(X\) has no member of
\[
  \set{e \in E}{\text{\(e\) is the tail label of some 0-arc in the path}}\text{.}
\]
In other words, the redundancy in BDDs means don't-care.
For example, \cref{fig:dd_examples} shows a BDD and ZDD representing \(\Bqty{\Bqty{1, 2, 5}, \Bqty{4, 5}, \Bqty{5}}\) with respect to \(\pqty{\Bqty{1, 2, 3, 4, 5}, \leq}\).
In this paper, a directed path is treated as a sequence of labeled arcs.

The subgraph rooted by a node in a BDD \(\bdd{\mathcal{S}}\) (resp. ZDD \(\zdd{\mathcal{S}}\)) is also a BDD (resp. ZDD), called the \emph{sub-BDD} of \(\bdd{\mathcal{S}}\) (resp. \emph{sub-ZDD} of \(\zdd{\mathcal{S}}\)).
In particular, if the root of \(\bdd{\mathcal{S}}\) is non-terminal, the sub-BDD rooted by \(\rootdd\pqty{\bdd{\mathcal{S}}}_0\) (resp. \(\rootdd\pqty{\bdd{\mathcal{S}}}_1\)) is called the \emph{0-child} \(\childz\pqty{\bdd{\mathcal{S}}}\) (resp. \emph{1-child} \(\childo\pqty{\bdd{\mathcal{S}}}\)) of \(\bdd{\mathcal{S}}\).
The same definitions apply to ZDDs for 0-child and 1-child.
For a ZDD \(\zdd{\mathcal{S}}\) with a non-terminal root, given the conditions described above, the 0-child and 1-child of \(\zdd{\mathcal{S}}\) represent \(\set{S \in \mathcal{S}}{\labelroot{\zdd{\mathcal{S}}} \notin S}\), \(\set{S \setminus \Bqty{\labelroot{\zdd{\mathcal{S}}}}}{\labelroot{\zdd{\mathcal{S}}} \in S \in \mathcal{S}}\), respectively.

The following property is known for BDDs/ZDDs:

\begin{proposition}\label{prop:reachablity_dd}
  Every non-terminal node in a BDD/ZDD can reach 1-terminal.
\end{proposition}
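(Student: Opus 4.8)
The plan is to argue by contradiction, exploiting the strict monotonicity of labels along arcs together with the node-deletion reduction rules. Suppose some non-terminal node fails to reach the 1-terminal \(\top\). Since the diagram has only finitely many non-terminal nodes, each carrying a label in the finite set \(E\), I would choose among all such ``bad'' nodes one, say \(\nu\), whose label \(\labeldd\pqty{\nu}\) is maximum with respect to \(\preceq\); the total order on \(E\) guarantees such a maximum exists. (If no non-terminal node exists at all, the statement holds vacuously.)

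The first key step is to observe that neither child of \(\nu\) can reach \(\top\): if a directed path from \(\nu_0\) or \(\nu_1\) reached \(\top\), then prepending the corresponding arc out of \(\nu\) would yield a path from \(\nu\) to \(\top\), contradicting the choice of \(\nu\).

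The second key step rules out either child being a non-terminal node. Every non-terminal node satisfies \(\labeldd\pqty{\nu} \prec \labeldd\pqty{\nu_0}\) and \(\labeldd\pqty{\nu} \prec \labeldd\pqty{\nu_1}\), so if \(\nu_0\) (or \(\nu_1\)) were non-terminal it would be a non-terminal node unable to reach \(\top\) yet carrying a strictly larger label than \(\nu\), contradicting the maximality of \(\labeldd\pqty{\nu}\). Hence both children are terminals. Moreover neither can be \(\top\) itself, since an arc from \(\nu\) directly into \(\top\) would make \(\nu\) reach \(\top\); thus \(\nu_0 = \nu_1 = \bot\).

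The final step invokes the relevant node-deletion rule to close the argument. For a BDD, \(\nu_0 = \nu_1 = \bot\) in particular gives \(\nu_0 = \nu_1\), so \(\nu\) should have been removed by \cref{item:node_deletion_bdd}; for a ZDD, \(\nu_1 = \bot\) means \(\nu\) should have been removed by \cref{item:node_deletion_zdd}. In either case \(\nu\) cannot occur in the fully reduced diagram, which is the desired contradiction. I expect the only delicate point to be handling the base case uniformly—verifying that a node whose children are both \(\bot\) is genuinely forbidden in \emph{both} diagram types—but this is immediate from the statements of the two node-deletion rules, and no induction beyond the single maximal-label choice is required.
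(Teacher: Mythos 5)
Your proposal is correct and follows essentially the same route as the paper's proof: pick a non-terminal node of maximal label among those that cannot reach \(\top\), use the label-monotonicity of arcs to force \(\nu_0 = \nu_1 = \bot\), and derive a contradiction with \cref{item:node_deletion_bdd} or \cref{item:node_deletion_zdd}. The only difference is that you spell out the intermediate steps (children cannot reach \(\top\), hence cannot be non-terminal, hence are \(\bot\)) that the paper compresses into a single sentence.
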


\begin{proof}
  Suppose that there exist non-terminal nodes that cannot reach 1-terminal.
  Let \(\nu\) be one of such nodes with the largest label.
  Since \(\labeldd\pqty{\nu} \prec \labeldd\pqty{\nu_0}\) and \(\labeldd\pqty{\nu} \prec \labeldd\pqty{\nu_1}\), we have \(\nu_0 = \nu_1 = \bot\).
  This contradicts \cref{item:node_deletion_bdd} for BDDs and \cref{item:node_deletion_zdd} for ZDDs.
\end{proof}

\section{Structural relations between different representations}\label{sec:structural_relations}

There are several variations of BDDs and ZDDs representing matroids.
In addition to the choice between BDDs and ZDDs, the represented collection of independent sets or bases must be chosen.
Moreover, there may be some relations between the BDD/ZDD for a matroid and its dual.
This section compares these three choices and proves \cref{thm:intro_structure}.

Fix a matroid \(M\) and a total order \(\preceq\) on the ground set of \(M\).
In this section, we denote by \(\gset\), \(\isets\), and \(\bases\) the ground set, the collections of independent sets, and that of bases of \(M\), respectively.

Before moving to the proof, we introduce clutters.
A set family is called a \emph{clutter} if no member of it contains another member.
We can show that each 1-path corresponds to a member of a clutter one-to-one in a BDD for the clutter.

\begin{lemma}\label{lem:clutter_bdd}
  Let \(\mathcal{C} \subseteq 2^E\) be a clutter on \(E\).
  In the BDD \(\bdd{\mathcal{C}}[E][\preceq]\), every 1-path traverses exactly \(\abs{E}\) non-terminal nodes.
\end{lemma}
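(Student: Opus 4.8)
The plan is to read off, from the BDD semantics recalled in \cref{sec:decision_diagrams}, the family of subsets encoded by a single 1-path, and then play it against the defining property of a clutter.

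First I would fix an arbitrary 1-path \(P\) in \(\bdd{\mathcal{C}}[E][\preceq]\) and record the decisions it makes. Let \(A\) be the set of labels of the tails of the 1-arcs traversed by \(P\), and \(B\) the set of labels of the tails of its 0-arcs. Since the labels along any directed path are strictly increasing with respect to \(\preceq\), each element of \(E\) labels at most one node of \(P\); hence \(A\) and \(B\) are disjoint, and the number of non-terminal nodes traversed by \(P\) is exactly \(\abs{A} + \abs{B}\), because precisely one arc leaves each such node. Writing \(C \coloneqq E \setminus (A \cup B)\) for the set of labels \emph{skipped} by \(P\), the assertion \(\abs{A} + \abs{B} = \abs{E}\) is equivalent to \(C = \emptyset\), so it suffices to rule out skipped labels.

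Next I would invoke the stated meaning of a 1-path in a BDD: the path \(P\) represents exactly the family of all \(X \subseteq E\) with \(A \subseteq X\) and \(X \cap B = \emptyset\), and every such \(X\) belongs to \(\mathcal{C}\). Taking \(X = A\) (which satisfies \(A \subseteq A\) and \(A \cap B = \emptyset\)) gives \(A \in \mathcal{C}\). Now suppose \(C \neq \emptyset\) and pick any \(c \in C\). Then \(A \cup \Bqty{c}\) still satisfies \(A \subseteq A \cup \Bqty{c}\) and \((A \cup \Bqty{c}) \cap B = \emptyset\), since \(c \notin B\); hence \(A \cup \Bqty{c} \in \mathcal{C}\) as well. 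But \(A \subsetneq A \cup \Bqty{c}\), so \(\mathcal{C}\) would contain two members one of which contains the other, contradicting the hypothesis that \(\mathcal{C}\) is a clutter. Therefore \(C = \emptyset\), which yields \(\abs{A} + \abs{B} = \abs{E}\), as required. (If \(\mathcal{C} = \emptyset\) there are no 1-paths and the statement holds vacuously.)

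The entire substance of the argument lies in translating ``a skipped level'' into ``a free coordinate of the subcube that \(P\) represents'': once that subcube reading of the deletion rule \cref{item:node_deletion_bdd} is in hand, the clutter condition forbids any free coordinate along a single sweep. The one place that needs care is checking that the two witnesses \(A\) and \(A \cup \Bqty{c}\) genuinely lie in the subcube encoded by \(P\) (so that both are in \(\mathcal{C}\)) and are strictly comparable — this is exactly the point at which the ``don't-care'' interpretation of \cref{item:node_deletion_bdd} collides with the clutter property. Should the subcube phrasing be judged too informal, a fallback would be an induction on \(\abs{E}\) via the 0-child \(\childz\pqty{\bdd{\mathcal{C}}}\) and 1-child \(\childo\pqty{\bdd{\mathcal{C}}}\), but the direct subcube argument is cleaner and avoids the extra bookkeeping.
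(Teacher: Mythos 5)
Your proof is correct and follows essentially the same route as the paper's: the paper also argues by contradiction that a skipped (redundant) label \(e\) on a 1-path forces both some \(C\) and \(C \cup \Bqty{e}\) to lie in \(\mathcal{C}\), violating the clutter condition. Your write-up merely makes the witnesses (\(A\) and \(A \cup \Bqty{c}\)) and the subcube semantics of \cref{item:node_deletion_bdd} explicit, which the paper leaves implicit.
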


\begin{proof}
  Suppose that there exists a 1-path with redundant nodes.
  Let \(e \in E\) be the label of one of such nodes.
  Then, for some \(C \subseteq E \setminus \Bqty{e}\), both \(C\) and \(C \cup \Bqty{e}\) are members of \(\mathcal{C}\); a contraction.
\end{proof}

Similarly, we can show that the heads of the corresponding 0-arc and 1-arc are different in a ZDD for a clutter.

\begin{lemma}\label{lem:clutter_zdd}
  Let \(\mathcal{C} \subseteq 2^E\) be a clutter on \(E\).
  In the ZDD \(\zdd{\mathcal{C}}[E][\preceq]\), every non-terminal node \(\nu\) satisfies that \(\nu_0 \neq \nu_1\).
\end{lemma}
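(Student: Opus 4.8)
The claim is that in the ZDD for a clutter $\mathcal{C}$, every non-terminal node $\nu$ has distinct children, i.e.\ $\nu_0 \neq \nu_1$. The plan is to argue by contradiction in a manner parallel to \cref{lem:clutter_bdd}: suppose some non-terminal node $\nu$ satisfies $\nu_0 = \nu_1$, and then extract from this coincidence two members of $\mathcal{C}$ one of which contains the other, contradicting the clutter property. The key is to translate the structural coincidence $\nu_0 = \nu_1$ into a statement about the sets represented by the sub-ZDDs hanging below $\nu$.

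First I would set $e \coloneqq \labeldd\pqty{\nu}$ and consider the sub-ZDD rooted at $\nu$, which represents some family $\mathcal{S}_\nu \subseteq 2^{\set{e' \in E}{e \preceq e'}}$ (using the ordering condition that all labels below $\nu$ are $\succeq e$). By \cref{prop:reachablity_dd}, $\nu$ can reach the 1-terminal, so $\mathcal{S}_\nu \neq \emptyset$. Recall from the definition of 0-child and 1-child for ZDDs that the 0-child represents $\set{S \in \mathcal{S}_\nu}{e \notin S}$ and the 1-child represents $\set{S \setminus \Bqty{e}}{e \in S \in \mathcal{S}_\nu}$. The hypothesis $\nu_0 = \nu_1$ means these two sub-ZDDs are the \emph{same} node, hence represent the \emph{same} family; call it $\mathcal{T}$. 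Since $\nu$ is non-terminal and reaches the 1-terminal, $\mathcal{T}$ is nonempty, so pick any $T \in \mathcal{T}$.

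Now I would read off the contradiction. From the 0-child description, $T \in \mathcal{S}_\nu$ and $e \notin T$. From the 1-child description, $T = S \setminus \Bqty{e}$ for some $S \in \mathcal{S}_\nu$ with $e \in S$; since $e \notin T$ this forces $S = T \cup \Bqty{e} \in \mathcal{S}_\nu$. Thus both $T$ and $T \cup \Bqty{e}$ lie in $\mathcal{S}_\nu$. Because $\mathcal{S}_\nu$ is exactly the family of subsets realized by extending a fixed 1-path prefix from the root down to $\nu$, both $T$ and $T \cup \Bqty{e}$ extend to genuine members $C, C \cup \Bqty{e} \in \mathcal{C}$ sharing the same prefix $P$ (the set of 1-arc tail labels above $\nu$); concretely $C \coloneqq P \cup T$ and $C \cup \Bqty{e} = P \cup (T \cup \Bqty{e})$. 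These are two members of $\mathcal{C}$ with $C \subsetneq C \cup \Bqty{e}$, contradicting that $\mathcal{C}$ is a clutter.

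The main obstacle is purely bookkeeping rather than conceptual: one must be careful that the family $\mathcal{S}_\nu$ represented by the sub-ZDD at $\nu$ corresponds, via the fixed prefix $P$ of 1-arc labels on the path from the root to $\nu$, to actual members of $\mathcal{C}$, and that membership in $\mathcal{C}$ is recovered by taking the union with $P$. This is exactly the 1-path semantics of ZDDs described before \cref{prop:reachablity_dd}, so once that correspondence is invoked cleanly the contradiction is immediate. In fact the argument can be compressed to essentially one sentence in the style of the proof of \cref{lem:clutter_bdd}: if $\nu_0 = \nu_1$, then for the prefix $C$ reaching $\nu$ via 1-arcs and some continuation, both $C$ and $C \cup \Bqty{e}$ belong to $\mathcal{C}$, a contradiction.
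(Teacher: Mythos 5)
Your proposal is correct and follows essentially the same route as the paper: the paper also argues by contradiction, using \cref{item:node_deletion_zdd} and \cref{prop:reachablity_dd} to obtain a 1-path through the 1-arc of \(\nu\) representing some \(C \in \mathcal{C}\) with \(\labeldd\pqty{\nu} \in C\), and then swaps that 1-arc for the coinciding 0-arc to exhibit \(C \setminus \Bqty{\labeldd\pqty{\nu}} \in \mathcal{C}\), contradicting the clutter property. Your sub-ZDD/child-semantics bookkeeping is just a more explicit rendering of the same one-sentence argument you give at the end.
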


\begin{proof}
  Suppose that there exists a non-terminal node \(\nu\) such that \(\nu_0 = \nu_1\).
  Since \cref{item:node_deletion_zdd} implies that \(\nu_1 \neq \bot\), there is a 1-path in the ZDD traversing the 1-arc \(\pqty{\nu, \nu_1}\) by \cref{prop:reachablity_dd}.
  Let \(C \in \mathcal{C}\) be the corresponding set to the 1-path.
  Since \(\nu_0 = \nu_1\), replacing \(\pqty{\nu, \nu_1}\) in the 1-path with the 0-arc \(\pqty{\nu, \nu_0}\) also results in a 1-path in the ZDD.
  Thus \(C \setminus \Bqty{\labeldd\pqty{\nu}} \in \mathcal{C}\); a contraction.
\end{proof}

A collection of bases is a clutter since each basis has the same size~\cite[Theorem 6]{b2d08917-c364-3337-af1c-57f8ed647c1c}.
Thus \cref{lem:clutter_bdd,lem:clutter_zdd} can be applied to a collection of bases.

The first comparison focuses on the sizes between a BDD and a ZDD for a collection of bases.
This comparison can be generalized to a clutter.

\begin{theorem}\label{thm:clutter_zdd_and_clutter_bdd}
  Let \(\mathcal{C} \subseteq 2^E\) be a clutter on \(E\).
  The size of the ZDD \(\zdd{\mathcal{C}}[E][\preceq]\) is never greater than that of the BDD \(\bdd{\mathcal{C}}[E][\preceq]\).
\end{theorem}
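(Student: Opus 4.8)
The plan is to compare the two diagrams level by level, relying on the fact that the size of a diagram is the sum of its widths. Fix the order $e_1 \prec \cdots \prec e_n$ on $E$. For each index $i$ and each $A \subseteq \Bqty{e_1, \ldots, e_{i-1}}$, I would work with the cofactor
\[
  \mathcal{C}_A^i \coloneqq \set{C \setminus A}{\text{$C \in \mathcal{C}$ and $C \cap \Bqty{e_1, \ldots, e_{i-1}} = A$}} \subseteq 2^{\Bqty{e_i, \ldots, e_n}},
\]
which is again a clutter: if $C_1 \setminus A \subseteq C_2 \setminus A$ for two such members, then $C_1 \subseteq C_2$, hence $C_1 = C_2$. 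The first step is to read off, directly from the reduction rules, that the $i$th level of each diagram is indexed by a set of distinct cofactors $\mathcal{C}_A^i$: since equal represented families are merged by \ref{item:node_sharing}, the nodes labeled $e_i$ correspond to the distinct families $\mathcal{C}_A^i$, each retained precisely when it is not removed by that diagram's deletion rule. Concretely, a node for $\mathcal{C}_A^i$ survives in $\bdd{\mathcal{C}}$ iff $e_i$ is \emph{essential}, i.e. $\set{F \in \mathcal{C}_A^i}{e_i \notin F} \neq \set{F \setminus \Bqty{e_i}}{\text{$e_i \in F \in \mathcal{C}_A^i$}}$ (otherwise \ref{item:node_deletion_bdd} applies), and survives in $\zdd{\mathcal{C}}$ iff $e_i$ is \emph{used}, i.e. some member of $\mathcal{C}_A^i$ contains $e_i$ (otherwise \ref{item:node_deletion_zdd} applies).

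The crux is then a one-line consequence of the clutter property: for the clutter $\mathcal{C}_A^i$, being \emph{used} implies being \emph{essential}. Indeed, if some $F \in \mathcal{C}_A^i$ contains $e_i$, then $F \setminus \Bqty{e_i}$ belongs to the second family above; were the two families equal, $F \setminus \Bqty{e_i}$ would also lie in $\mathcal{C}_A^i$, and together with $F \in \mathcal{C}_A^i$ this contradicts the clutter condition. Hence at each level $i$ the set of distinct cofactors indexing ZDD nodes is contained in the set indexing BDD nodes, so the $i$th width of $\zdd{\mathcal{C}}$ is at most that of $\bdd{\mathcal{C}}$. Summing over $i \in \Bqty{1, \ldots, n}$ yields the claim.

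I expect the main obstacle to be the bookkeeping in the first step rather than the clutter implication itself: one must argue carefully, from the canonicity of the reduced diagrams, that the number of nodes carrying label $e_i$ really equals the number of distinct surviving cofactors $\mathcal{C}_A^i$ — in particular that a cofactor reachable through several prefixes $A$ is counted once, and that the differing meanings of the $1$-terminal in BDDs and ZDDs do not disturb the per-level count (this is why I phrase everything through the intrinsic cofactors $\mathcal{C}_A^i$ rather than trying to transform one diagram into the other). An equivalent and perhaps cleaner way to package the conclusion is as an explicit label-preserving injection sending the ZDD node represented by a used cofactor $\mathcal{C}_A^i$ to the BDD node represented by the same (necessarily essential) cofactor; injectivity is immediate because distinct nodes of a reduced diagram represent distinct families. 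This reuses \cref{lem:clutter_zdd} in spirit, which already guarantees that no ZDD node for a clutter is collapsible by the BDD rule.
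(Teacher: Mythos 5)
Your proposal is correct, but it takes a genuinely different route from the paper. The paper's proof is a one-line global transformation: by \cref{lem:clutter_bdd}, every 1-path of \(\bdd{\mathcal{C}}[E][\preceq]\) traverses all \(\abs{E}\) levels, so the BDD read with ZDD semantics already represents \(\mathcal{C}\) and satisfies \cref{item:node_sharing}; it is therefore a ZDD in which \cref{item:node_deletion_zdd} has merely not been fully applied, and finishing that reduction can only delete nodes. You instead compare the diagrams level by level through the intrinsic cofactors \(\mathcal{C}_A^i\), using the canonical-form characterization that the \(i\)th width of the BDD (resp.\ ZDD) counts the distinct cofactors that are \emph{essential} (resp.\ \emph{used}) at \(e_i\), and then observing that for clutter cofactors ``used'' implies ``essential'' --- which is exactly the content of \cref{lem:clutter_bdd,lem:clutter_zdd} rephrased in cofactor language. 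The trade-off: the paper's argument is shorter and doubles as an explicit conversion procedure (apply \cref{item:node_deletion_zdd} to the BDD to obtain the ZDD), whereas yours needs the Sieling--Wegener-style subfunction count, which the paper never develops formally; in exchange you get the strictly stronger per-level conclusion that the \(i\)th width of \(\zdd{\mathcal{C}}\) is at most the \(i\)th width of \(\bdd{\mathcal{C}}\), which meshes naturally with the width analysis of \cref{sec:width}. Your bookkeeping worries are resolvable in the standard way (distinct surviving cofactors correspond bijectively to nodes of the reduced diagram by canonicity), so there is no gap.
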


\begin{proof}
  By \cref{lem:clutter_bdd}, the BDD \(\bdd{\mathcal{C}}[E][\preceq]\) can be interpreted as a ZDD where \cref{item:node_deletion_zdd} is not fully applied.
\end{proof}

The second comparison focuses on the relation between a matroid and its dual in a BDD for a collection of bases.
This comparison can also be generalized to a clutter.

\begin{theorem}\label{thm:clutter_bdd}
  Let \(\mathcal{C} \subseteq 2^E\) be a clutter on \(E\) and \(\mathcal{C}^\ast\) be the subset family \(\set{E \setminus C}{C \in \mathcal{C}}\) of \(E\).
  The BDD \(\bdd{\mathcal{C}^\ast}[E][\preceq]\) can be obtained from \(\bdd{\mathcal{C}}[E][\preceq]\) by updating \(\pqty{\nu_0, \nu_1} \gets \pqty{\nu_1, \nu_0}\), that is, swapping the 0- and 1-arc for every non-terminal node \(\nu\).
\end{theorem}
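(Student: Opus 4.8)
The plan is to recognize that swapping the two outgoing arcs at every non-terminal node does not alter the underlying directed acyclic graph at all: it only exchanges which arc of each node is tagged ``0'' and which is tagged ``1''. The head set \(\Bqty{\nu_0, \nu_1}\) of every node is unchanged, so the root, the terminals \(\bot\) and \(\top\), every indegree, and acyclicity are all preserved; and since exchanging the two children does not disturb \(\labeldd\pqty{\nu} \prec \labeldd\pqty{\nu_0}\) and \(\labeldd\pqty{\nu} \prec \labeldd\pqty{\nu_1}\), the swapped object \(D\) is again a syntactically valid BDD over \(\pqty{E, \preceq}\). It then suffices to show (i) that \(D\) is fully reduced and (ii) that \(D\) represents \(\mathcal{C}^\ast\); by the uniqueness of the reduced BDD for a fixed family and order (established in \cref{sec:decision_diagrams}), these two facts force \(D = \bdd{\mathcal{C}^\ast}[E][\preceq]\).

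For (i), I would check that both reduction rules are invariant under the swap. Since \(\bdd{\mathcal{C}}[E][\preceq]\) is fully reduced, \cref{item:node_deletion_bdd} gives \(\nu_0 \neq \nu_1\) for every node \(\nu\); the swap replaces \(\pqty{\nu_0, \nu_1}\) by \(\pqty{\nu_1, \nu_0}\), which are still distinct, so no node of \(D\) is deletable. Likewise, two distinct nodes sharing a label have equal ordered child pairs in \(D\) precisely when they had equal child pairs in \(\bdd{\mathcal{C}}[E][\preceq]\); as \cref{item:node_sharing} rules out the latter in the reduced diagram, it rules out the former in \(D\).

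For (ii), I would invoke \cref{lem:clutter_bdd}. Because \(\mathcal{C}\) is a clutter, every 1-path of \(\bdd{\mathcal{C}}[E][\preceq]\) traverses all \(\abs{E}\) non-terminal nodes, so along each 1-path the tail labels of the 1-arcs and those of the 0-arcs partition \(E\). Reading off the 1-arc tail labels sets up a bijection between the 1-paths and the members \(C \in \mathcal{C}\), and under it the 0-arc tail labels form exactly \(E \setminus C\). The swap leaves the collection of root-to-\(\top\) paths unchanged as paths in the graph, but now the 1-arcs along such a path are the former 0-arcs, so the set read off is \(E \setminus C\). Hence the family represented by \(D\) is \(\set{E \setminus C}{C \in \mathcal{C}} = \mathcal{C}^\ast\), as required.

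The main obstacle is step (ii), and it is exactly where the clutter hypothesis is used: through \cref{lem:clutter_bdd} it guarantees that no element is a ``don't care'' on any 1-path, so a single 1-path encodes one member together with its complement cleanly, and swapping turns the member into its complement. Without this, a 1-path could skip labels, and reading off the complemented family would require tracking the skipped (don't-care) elements. Step (i), by contrast, is routine once one observes that \cref{item:node_deletion_bdd,item:node_sharing} constrain only the label together with the pair of children, both of which the swap preserves.
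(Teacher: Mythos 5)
Your proposal is correct and follows essentially the same route as the paper: both arguments observe that the swap leaves the underlying graph unchanged and then use \cref{lem:clutter_bdd} to conclude that each 1-path, having no don't-care elements, flips from representing \(C\) to representing exactly \(E \setminus C\). The only difference is that you explicitly verify that \cref{item:node_sharing} and \cref{item:node_deletion_bdd} remain satisfied after the swap, a point the paper's proof leaves implicit in the sentence ``the structure of the BDD remains unchanged except for the labels.''
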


\begin{proof}
  The structure of the BDD remains unchanged except for the labels after the update.
  By \cref{lem:clutter_bdd}, let \(C \in \mathcal{C}\) be the corresponding set to some 1-path in \(\bdd{\mathcal{C}}[E][\preceq]\).
  After the update, this 1-path changes to represent only \(E \setminus C\).
  Therefore, the updated BDD represents \(\set{E \setminus C}{C \in \mathcal{C}}\), which is precisely \(\mathcal{C}^\ast\).
\end{proof}

The third comparison focuses on the relation between decision diagrams for a collection of independent sets and those for a collection of bases.
Here the lexicographic order on \(\bases\) with respect to the total order \(\preceq\) is defined as follows.
For \(B \in \bases\), let \(\sequence\pqty{B}\) be the sequence of elements in \(B\) arranged in ascending order with respect to \(\preceq\) and \({\sequence\pqty{B}}_j\) be the \(j\)th element of \(\sequence\pqty{B}\).
For \(B_1, B_2 \in \bases\), we say that \(B_1\) is lexicographically smaller than \(B_2\) if there exists \(k \in \Bqty{1, \ldots, \rankm\pqty{M}}\) such that \({\sequence\pqty{B_1}}_k \prec {\sequence\pqty{B_2}}_k\) and \({\sequence\pqty{B_1}}_j = {\sequence\pqty{B_2}}_j\) for all \(j \in \Bqty{1, \ldots, k - 1}\).

\begin{theorem}\label{thm:isets_and_bases}
  Let \(\isets^\prime\) be a set family represented by the BDD obtained from \(\bdd{\bases}[\gset][\preceq]\) by updating \(\nu_0 \gets \nu_1\) for every non-terminal node \(\nu\) with \(\nu_0 = \bot\) and then applying the reduction rules for BDDs.
  In other words, \(\bdd{\isets^\prime}\) is obtained from \(\bdd{\bases}[\gset][\preceq]\) by replacing every 0-arc whose head is 0-terminal with the corresponding 1-arc and applying the reduction rules for BDDs.
  Then \(\isets^\prime\) is equivalent to \(\isets\).
  The same holds for ZDDs, but the ZDD satisfies the reduction rules immediately after the update.
\end{theorem}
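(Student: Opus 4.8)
The plan is to prove the statement \emph{before} reduction: I claim that the (possibly unreduced) diagram $D$ obtained from $\bdd{\bases}[\gset][\preceq]$ by redirecting every $0$-arc whose head is $\bot$ to the head of the corresponding $1$-arc already represents $\isets$, in the sense that traversing $D$ from the root according to membership in $X$ reaches $\top$ exactly when $X \in \isets$. Since the BDD reduction rules preserve the represented family and yield the unique reduced BDD, this gives $\isets^\prime = \isets$. Two matroid facts drive the argument: (i) $\isets$ is the downward closure of $\bases$, i.e. $I \in \isets$ iff $I \subseteq B$ for some $B \in \bases$, which follows from \cref{item:i2} together with the fact that every independent set extends to a basis; and (ii) the identities $\fun{\isets}{\deletion{M}{\Bqty{e_1}}} = \set{I \in \isets}{e_1 \notin I}$ and, whenever $e_1$ is not a loop, $\fun{\isets}{\contraction{M}{\Bqty{e_1}}} = \set{I \setminus \Bqty{e_1}}{e_1 \in I \in \isets}$.

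Next I would identify the two children of the root. By \cref{lem:clutter_bdd} every $1$-path of $\bdd{\bases}$ visits all $\abs{\gset}$ levels, so the root is labeled $e_1$ and its $0$- and $1$-children represent, as Shannon cofactors, the families $\bases_0 \coloneqq \set{B \in \bases}{e_1 \notin B}$ and $\bases_1 \coloneqq \set{B \setminus \Bqty{e_1}}{e_1 \in B \in \bases}$ over $\pqty{\gset \setminus \Bqty{e_1}, \preceq}$. The key point is that $\bases_0 = \fun{\bases}{\deletion{M}{\Bqty{e_1}}}$ when $e_1$ is not a coloop (and $\bases_0 = \emptyset$, so the $0$-child is $\bot$, otherwise), while $\bases_1 = \fun{\bases}{\contraction{M}{\Bqty{e_1}}}$ when $e_1$ is not a loop (and $\bases_1 = \emptyset$, so the $1$-child is $\bot$, otherwise). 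Hence each non-$\bot$ child is itself the BDD of the collection of bases of a matroid on $\gset \setminus \Bqty{e_1}$, and the redirection operation on $D$ restricts to the same operation on each child.

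The proof is then an induction on $\abs{\gset}$ of the membership statement above, carried out for a fixed $X \subseteq \gset$ by splitting on whether $e_1 \in X$ and whether $e_1$ is a loop or coloop. If $e_1 \notin X$ and $e_1$ is a coloop, the dead $0$-arc has been redirected to the $1$-child, so acceptance reduces by the inductive hypothesis to $X \in \fun{\isets}{\contraction{M}{\Bqty{e_1}}} = \fun{\isets}{\deletion{M}{\Bqty{e_1}}}$; if $e_1 \notin X$ and $e_1$ is not a coloop, the $0$-arc is untouched and acceptance reduces to $X \in \fun{\isets}{\deletion{M}{\Bqty{e_1}}}$. If $e_1 \in X$ and $e_1$ is a loop, the $1$-arc leads to $\bot$ and $X$ is rejected, matching the fact that no member of $\isets$ contains a loop; otherwise acceptance reduces to $X \setminus \Bqty{e_1} \in \fun{\isets}{\contraction{M}{\Bqty{e_1}}}$. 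In each case identity (ii) makes acceptance coincide with $X \in \isets$, completing the induction; applying the reduction rules then produces $\bdd{\isets}$. For the ZDD statement I would rerun the same induction, observing that the $0$- and $1$-children of $\zdd{\bases}$ represent the very same families $\bases_0$ and $\bases_1$; the only structural change is that a loop $e_1$ is suppressed at the root, which reduces directly to $M \setminus \Bqty{e_1}$ and is harmless since loops never lie in $\isets$. For the claim that no further reduction is needed, I would invoke \cref{lem:clutter_zdd}: since $\nu_0 \neq \nu_1$ at every node, a redirected $0$-arc acquires head $\nu_1 \neq \bot$, so \cref{item:node_deletion_zdd} is never triggered, and a short case analysis shows the redirection creates no new violation of \cref{item:node_sharing}; thus the updated ZDD is already reduced and equals $\zdd{\isets}$.

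The main obstacle, I expect, is resisting the natural but false shortcut of arguing that the redirected sub-diagram represents the downward closure of the clutter it came from. Downward closure does not compose along the Shannon split: the $e_1$-free branch would have to absorb the sets arising from the $e_1$-branch, which fails for a general clutter. What rescues the argument is matroid-specific, namely recognizing the children as the bases of $\deletion{M}{\Bqty{e_1}}$ and $\contraction{M}{\Bqty{e_1}}$ and invoking identity (ii) (equivalently, that every basis of the contraction extends to a basis avoiding $e_1$ when $e_1$ is not a coloop). Performing the induction at the level of membership of a single $X$, rather than at the level of whole families, is precisely what sidesteps this difficulty.
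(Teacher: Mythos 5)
Your proposal is correct, but it proves the theorem by a genuinely different route than the paper. You set up a structural induction on \(\abs{\gset}\): using \cref{lem:clutter_bdd} you identify the root's children with the (reduced) BDDs for \(\fun{\bases}{\deletion{M}{\Bqty{e_1}}}\) and \(\fun{\bases}{\contraction{M}{\Bqty{e_1}}}\), observe that the redirection is node-local and hence commutes with passing to sub-diagrams, and then verify membership of a single \(X\) case by case (loop, coloop, neither), with the coloop case resting on \(\deletion{M}{\Bqty{e_1}} = \contraction{M}{\Bqty{e_1}}\). The paper instead argues globally on \(1\)-paths: the inclusion \(\isets^\prime \subseteq \isets\) is obtained by lifting a \(1\)-path of the updated diagram back to a \(1\)-path of \(\bdd{\bases}\), and the harder inclusion \(\isets \subseteq \isets^\prime\) is obtained by taking the lexicographically largest basis \(B \supseteq I\) and using the exchange axiom \cref{item:b2_ast} to show that \(\pqty{\nu_e}_0 = \bot\) for every \(e \in B \setminus I\) along the corresponding path. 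Your induction buys a more conceptual explanation (the operation realizes the recursion of \(\isets\) over deletion and contraction, and the exchange-axiom content is absorbed into the standard facts about bases of minors, which the paper itself invokes at the start of \cref{sec:width}); the paper's argument buys a proof that works directly on the given diagram without needing to re-identify sub-diagrams as reduced BDDs of minors at every level. Your treatment of the ZDD case is slightly more compressed --- you should make explicit that acceptance in a ZDD additionally requires that no element of \(X\) is skipped, and spell out the short \cref{item:node_sharing} case analysis via \cref{lem:clutter_zdd} (which is exactly the paper's closing argument) --- but these are presentational gaps, not mathematical ones. You also correctly identify why the statement fails for general clutters, which the paper illustrates in \cref{fig:counterexample_of_isets_and_bases}.
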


\begin{proof}
  Let \(\mathsf{B}^\prime\) be the structure obtained from \(\bdd{\bases}[\gset][\preceq]\) by updating \(\nu_0 \gets \nu_1\) for every non-terminal node \(\nu\) with \(\nu_0 = \bot\) and not applying the reduction rules.
  Note that \(\mathsf{B}^\prime\) and \(\bdd{\isets^\prime}\) represent the same set family \(\isets^\prime\).

  Take any \(I \in \isets^\prime\).
  Then, by \cref{lem:clutter_bdd}, there is a 1-path \(P\) in \(\mathsf{B}^\prime\) corresponding to \(I\).
  Since the updated 0-arcs have the same head as their corresponding 1-arcs, there is a 1-path \(P^\prime\) in \(\mathsf{B}^\prime\) obtained by replacing each updated 0-arc \(\pqty{\nu, \nu_0}\) in \(P\) with the corresponding 1-arc \(\pqty{\nu, \nu_1}\).
  The corresponding set \(B \in \isets^\prime\) to \(P^\prime\) satisfies that \(I \subseteq B\).
  Moreover, this 1-path \(P^\prime\) also exists in \(\bdd{\bases}[\gset][\preceq]\), implying \(B \in \bases\).
  Therefore, we have \(I \in \isets\).
  This implies that \(\isets^\prime \subseteq \isets\).

  Conversely, take any \(I \in \isets\).
  Let \(B\) be the lexicographically largest basis of \(M\) with respect to \(\preceq\) such that \(I \subseteq B\) and \(P\) be the 1-path in \(\bdd{\bases}[\gset][\preceq]\) corresponding to \(B\).

  Take any \(e \in B \setminus I\).
  By \cref{lem:clutter_bdd}, there exists exactly one non-terminal node \(\nu_e\) in \(P\) such that \(e = \labeldd\pqty{\nu_e}\).
  Suppose to the contrary that \(\pqty{\nu_e}_0 \neq \bot\).
  By \cref{prop:reachablity_dd}, there are 1-paths in \(\bdd{\bases}[\gset][\preceq]\) identical to \(P\) up to \(\nu_e\) but traversing the 0-arc \(\pqty{\nu_e, \pqty{\nu_e}_0}\).
  Let \(B_\succ \in \bases\) be a basis of \(M\) corresponding to such a 1-path so that, among all such bases, \(\abs{B_\succ \cap I}\) is maximal.
  Since \(B_\succ\) is lexicographically larger than \(B\), the choice of \(B\) implies \(I \nsubseteq B_\succ\).
  By \cref{item:b2_ast}, for some \(x \in I \setminus B_\succ \subseteq B \setminus B_\succ\), there exists \(y \in B_\succ \setminus B\) such that \(\pqty{B_\succ \setminus \Bqty{y}} \cup \Bqty{x} \in \bases\).
  Since \(y \notin I \subseteq B\), we have \(\abs{\pqty{\pqty{B_\succ \setminus \Bqty{y}} \cup \Bqty{x}} \cap I} = \abs{B_\succ \cap I} + 1\).
  Moreover, \(x \neq e\) since \(e \notin I\).
  Hence the maximality of \(\abs{B_\succ \cap I}\) is contradicted.
  Thus we obtain that \(\pqty{\nu_e}_0 = \bot\).

  Since \(\pqty{\nu_e}_0 = \bot\) for each \(e \in B \setminus I\), the 1-arcs in \(P\) whose tails have labels in \(B \setminus I\) satisfy that all corresponding 0-arcs are updated.
  Thus there is a 1-path in \(\mathsf{B}^\prime\) representing \(I\), implying that \(I \in \isets^\prime\).
  Therefore, we have \(\isets \subseteq \isets^\prime\).

  Replacing ``by \cref{lem:clutter_bdd}'' with ``by definition'', the same result can be obtained for ZDDs.
  Finally, we show that the updated ZDD \(\zdd{\isets^\prime}\) satisfies the reduction rules.
  Since the update only changes 0-arcs whose head is 0-terminal, all non-terminal nodes in \(\zdd{\isets^\prime}\) are reachable from the root, and no 1-arcs ends in 0-terminal.
  Suppose that there exist different non-terminal nodes \(\nu, \nu^\prime\) in \(\zdd{\bases}[\gset][\preceq]\) that must be shared after the update.
  Since all 1-arcs are not updated, we have \(\nu_1 = \nu^\prime_1\).
  Thus \(\nu_0 \neq \nu^\prime_0\), and either \(\nu_0 = \nu_1\) or \(\nu^\prime_0 = \nu^\prime_1\) must hold, which contradicts \cref{lem:clutter_zdd}.
\end{proof}

\begin{figure}[tb]
  \centering
  \begin{subcaptionblock}{0.41\textwidth}
    \centering
    \begin{tikzpicture}
      \node [terminal] at (-1, 0) (bot) {\(\bot\)};
      \node [terminal] at (1, 0) (top) {\(\top\)};
      \foreach \x/\id in {0/4,-1/3,1/2,0/1}
        \node [ddnode] at (\x, {(5-\id)*\ddsep}) (node\id) {\(\id\)};
      \draw [zedge] (node1) -- (node3) node [left, pos=0.02] {\edgelabel{0}};
      \draw [oedge] (node1) -- (node2) node [right, at start] {\edgelabel{1}};
      \draw [zedge] (node2) -- (node4) node [left, pos=0.02] {\edgelabel{0}};
      \draw [oedge] (node2) -- (top.north) node [right, pos=0.04] {\edgelabel{1}};
      \draw [zedge] (node3) -- (bot.north) node [left, pos=0.08] {\edgelabel{0}};
      \draw [oedge] (node3.340) -- (node4) node [right, at start] {\edgelabel{1}};
      \draw [zedge] (node4.200) -- (bot.north) node [left, at start] {\edgelabel{0}};
      \draw [oedge] (node4.340) -- (top.north) node [right, at start] {\edgelabel{1}};
    \end{tikzpicture}
    \caption{A ZDD for \(\Bqty{\Bqty{1, 2}, \Bqty{1, 4}, \Bqty{3, 4}}\)}
  \end{subcaptionblock}
  \begin{subcaptionblock}{0.58\textwidth}
    \centering
    \begin{tikzpicture}
      \node [terminal] at (1, 0) (top) {\(\top\)};
      \foreach \x/\id in {0/4,-1/3,1/2,0/1}
        \node [ddnode] at (\x, {(5-\id)*\ddsep}) (node\id) {\(\id\)};
      \draw [zedge] (node1) -- (node3) node [left, pos=0.02] {\edgelabel{0}};
      \draw [oedge] (node1) -- (node2) node [right, at start] {\edgelabel{1}};
      \draw [zedge] (node2) -- (node4) node [left, pos=0.02] {\edgelabel{0}};
      \draw [oedge] (node2) -- (top.north) node [right, pos=0.04] {\edgelabel{1}};
      \draw [zedge] (node3.290) -- (node4) node [left, near start] {\edgelabel{0}};
      \draw [oedge] (node3.340) -- (node4) node [right, at start] {\edgelabel{1}};
      \draw [zedge] (node4.290) -- (top.north) node [left, near start] {\edgelabel{0}};
      \draw [oedge] (node4.340) -- (top.north) node [right, at start] {\edgelabel{1}};
    \end{tikzpicture}
    \caption{A ZDD for \(\Bqty{\emptyset, \Bqty{1}, \Bqty{3}, \Bqty{4}, \Bqty{1, 2}, \Bqty{1, 4}, \Bqty{3, 4}}\)}
  \end{subcaptionblock}
  \caption{A counterexample to \cref{thm:isets_and_bases} in clutters}\label{fig:counterexample_of_isets_and_bases}
\end{figure}

The last sentence of this theorem indicates that the size of the ZDD does not change before and after the update.
In other words, the sizes of \(\zdd{\bases}[\gset][\preceq]\) and \(\zdd{\isets}[\gset][\preceq]\) are equal.
Note that applying this operation to the BDD/ZDD for a clutter \(\mathcal{C}\) does not necessarily yield the BDD/ZDD for \(\set{X}{X \subseteq C \in \mathcal{C}}\).
For example, \cref{fig:counterexample_of_isets_and_bases} shows that applying this operation to the ZDD for \(\Bqty{\Bqty{1, 2}, \Bqty{1, 4}, \Bqty{3, 4}}\) yields the ZDD for \(\Bqty{\emptyset, \Bqty{1}, \Bqty{3}, \Bqty{4}, \Bqty{1, 2}, \Bqty{1, 4}, \Bqty{3, 4}}\), which does not contain \(\Bqty{2}\).
Therefore, a BDD (resp. ZDD) for the collection of independent sets can be obtained from a BDD (resp. ZDD) for the collection of bases in time proportional to the size of the latter, without general operations to transform it into the former.

Finally, we focus on the relation between a ZDD for the collection of bases and a BDD for the collection of independent sets of its dual.
As in \cref{thm:clutter_bdd}, the operation of swapping the 0- and 1-arc is also important in this theorem.
Let \(\isets^\ast\) denote the collection of independent sets of \(M^\ast\).

\begin{theorem}\label{thm:bases_zdd_and_dual_isets_bdd}
  The BDD \(\bdd{\isets^\ast}[\gset][\preceq]\) can be obtained from the ZDD \(\zdd{\bases}[\gset][\preceq]\) by updating \(\pqty{\nu_0, \nu_1} \gets \pqty{\nu_1, \nu_0}\) for every non-terminal node \(\nu\).
\end{theorem}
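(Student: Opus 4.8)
The plan is to obtain this identity compositionally from the three transformations already proved, so as to avoid recomputing the represented family by hand. Abbreviate $\mathsf{B} \coloneqq \bdd{\bases}[\gset][\preceq]$ and let $\sigma$ denote the arc-swap $\pqty{\nu_0,\nu_1}\gets\pqty{\nu_1,\nu_0}$ performed at every non-terminal node. Two ingredients are available at once. By \cref{thm:clutter_bdd} applied to the clutter $\bases$, the structure $\sigma\pqty{\mathsf{B}}$ is exactly $\bdd{\fun{\bases}{M^\ast}}[\gset][\preceq]$, since $\fun{\bases}{M^\ast}=\set{\gset\setminus B}{B\in\bases}$. By the proof of \cref{thm:clutter_zdd_and_clutter_bdd}, the ZDD $\zdd{\bases}[\gset][\preceq]$ is obtained from $\mathsf{B}$ by fully applying \cref{item:node_deletion_zdd}, i.e.\ by deleting every non-terminal node $\nu$ with $\nu_1=\bot$ and redirecting its incoming arcs to $\nu_0$.

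The engine of the proof is the observation that $\sigma$ conjugates the rule \cref{item:node_deletion_zdd} into the update of \cref{thm:isets_and_bases}. Deleting a node with $\nu_1=\bot$ and redirecting to $\nu_0$ becomes, after swapping the two children, deleting a node with $\nu_0=\bot$ and redirecting to $\nu_1$, which is precisely the operation \cref{thm:isets_and_bases} uses to pass from the BDD of a collection of bases to the BDD of the associated collection of independent sets. Writing $\rho$ for the \cref{item:node_deletion_zdd}-reduction and $\tau$ for the \cref{thm:isets_and_bases}-update, this reads $\sigma\circ\rho=\tau\circ\sigma$ as an equality of operations. Evaluating on $\mathsf{B}$ and using $\rho\pqty{\mathsf{B}}=\zdd{\bases}[\gset][\preceq]$ together with $\sigma\pqty{\mathsf{B}}=\bdd{\fun{\bases}{M^\ast}}[\gset][\preceq]$ yields
\[
  \sigma\pqty{\zdd{\bases}[\gset][\preceq]}=\tau\pqty*{\bdd{\fun{\bases}{M^\ast}}[\gset][\preceq]}=\bdd{\isets^\ast}[\gset][\preceq]\text{,}
\]
the last equality being \cref{thm:isets_and_bases} applied to $M^\ast$. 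This is the assertion.

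To make the conjugation legitimate I would first check that no spurious reductions intervene. Since $\bases$ is a clutter, \cref{lem:clutter_zdd} gives $\nu_0\neq\nu_1$ at every non-terminal node of $\zdd{\bases}[\gset][\preceq]$, a property that $\sigma$ preserves; hence \cref{item:node_deletion_bdd} is never triggered and $\sigma\pqty{\zdd{\bases}[\gset][\preceq]}$ is already a fully reduced BDD. Moreover $\sigma$ fixes all labels and is symmetric in the two children, so it carries the node-sharing rule \cref{item:node_sharing} to itself; the only nodes removed on either side are those with a child equal to $\bot$, and $\sigma$ matches them one-to-one together with their redirection targets. Thus the two reduction processes stay in lockstep under $\sigma$, and by the confluence (uniqueness) of the reduced forms the two routes from $\mathsf{B}$ agree.

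I expect the main obstacle to be exactly this bookkeeping of the redirections and of any cascading applications of \cref{item:node_sharing}, since a single deletion can create newly shareable nodes on both sides simultaneously. The conjugation route is deliberately chosen to dodge the genuinely combinatorial difficulty that a direct identification of the family would meet: reading $\sigma\pqty{\zdd{\bases}}$ as a BDD, a root-to-$\top$ path encoding the basis $B$ has $1$-arc labels lying in $\gset\setminus B$, $0$-arc labels equal to $B$, and the coordinates suppressed by \cref{item:node_deletion_zdd} as don't-cares, so it contributes the sets $X$ with $T_B\subseteq X\subseteq\gset\setminus B$, where $T_B\subseteq\gset\setminus B$ is the set of coordinates the path actually traverses by a $0$-arc. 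Proving directly that the union of these over all bases equals $\isets^\ast=\set{X\subseteq\gset}{X\cap B=\emptyset\text{ for some }B\in\bases}$ requires a nontrivial exchange argument for the reverse inclusion (handling the lower bounds $T_B$), whereas relying on the conjugation together with uniqueness of reduced BDDs removes the need for it entirely.
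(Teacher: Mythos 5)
Your argument is correct, but it takes a genuinely different route from the paper's. The paper proves the statement directly: it reads \(\zdd{\bases}[\gset][\preceq]\) with swapped arcs as a BDD (legitimate by \cref{lem:clutter_zdd}), shows that each 1-path corresponding to a basis \(B\) represents a family of subsets of \(\gset \setminus B\) (giving \(\isets^\prime \subseteq \isets^\ast\)), and then establishes the hard inclusion \(\isets^\ast \subseteq \isets^\prime\) by taking the lexicographically smallest basis \(B\) with \(I^\ast \subseteq \gset \setminus B\) and running a basis-exchange argument via \cref{item:b2} to show that the corresponding 1-path skips every label in \(\gset \setminus \pqty{B \cup I^\ast}\) --- exactly the ``nontrivial exchange argument for the lower bounds \(T_B\)'' you describe in your last paragraph. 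Your conjugation route instead composes \cref{thm:clutter_bdd}, the reduction step underlying \cref{thm:clutter_zdd_and_clutter_bdd}, and \cref{thm:isets_and_bases}, and trades the combinatorics for a confluence argument; what it buys is that all matroid-theoretic content is outsourced to \cref{thm:isets_and_bases} (which already contains the dual exchange argument via \cref{item:b2_ast}), so no new exchange reasoning is needed. What it costs is the lockstep verification, and here your write-up is thinnest: checking that \(\sigma\pqty{\zdd{\bases}[\gset][\preceq]}\) is fully reduced (which you do, via \cref{lem:clutter_zdd}) is not by itself the whole story --- you must also exhibit one valid BDD-reduction sequence from the \(\tau\)-updated \(\sigma\pqty{\mathsf{B}}\) to \(\sigma\pqty{\zdd{\bases}[\gset][\preceq]}\), so that uniqueness of the reduced form applies. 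This is achievable with the ingredients you name: no new \cref{item:node_deletion_zdd}-redex ever arises during \(\rho\) because redirection targets are never \(\bot\) (by \cref{prop:reachablity_dd}), no node acquires equal children during \(\rho\) (else it would survive into \(\zdd{\bases}[\gset][\preceq]\) and contradict \cref{lem:clutter_zdd}), and \(\sigma\) carries \cref{item:node_sharing}-redexes to \cref{item:node_sharing}-redexes; so the image of the \(\rho\)-sequence is a valid, complete \(\tau\)-reduction. With that bookkeeping written out, your proof is complete and arguably more modular than the paper's, though it is not shorter once the details are included.
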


\begin{proof}
  By \Cref{lem:clutter_zdd}, there are no redundant nodes immediately after the update.
  Hence the updated structure can be interpreted as a BDD.
  Let \(\bdd{\isets^\prime}\) be the updated BDD.

  Let \(P\) be a 1-path in \(\zdd{\bases}[\gset][\preceq]\) and \(B \in \bases\) be the corresponding basis to \(P\).
  Assume that \(J \subseteq \gset \setminus B\) is the set of labels that do not appear in \(P\).
  Then the 1-path in \(\bdd{\isets^\prime}\) corresponding to the updated \(P\) represents the family of sets \(I^\ast\) satisfying \(\gset \setminus \pqty{B \cup J} \subseteq I^\ast \subseteq \gset \setminus B\).
  Note that \(\gset \setminus \pqty{B \cup J}\) means the label set of tails of all 0-arcs traversed by \(P\).
  Since such \(I^\ast\) belongs to \(\isets^\ast\), we have \(\isets^\prime \subseteq \isets^\ast\).

  Conversely, take any \(I^\ast \in \isets^\ast\).
  Let \(B\) be the lexicographically smallest basis of \(M\) with respect to \(\preceq\) such that \(I^\ast \subseteq \gset \setminus B\) and \(P\) be the 1-path in \(\zdd{\bases}[\gset][\preceq]\) corresponding to \(B\).

  Suppose to the contrary that there exists \(e \in \gset \setminus \pqty{B \cup I^\ast}\) with a non-terminal node \(\nu_e\) in \(P\) with \(e = \labeldd\pqty{\nu_e}\).
  Since \(\pqty{\nu_e}_1 \neq \bot\) by \cref{item:node_deletion_zdd}, there are 1-paths in \(\zdd{\bases}[\gset][\preceq]\) identical to \(P\) up to \(\nu_e\) but traversing the 1-arc \(\pqty{\nu_e, \pqty{\nu_e}_1}\) by \cref{prop:reachablity_dd}.
  Let \(B_\prec \in \bases\) be a basis of \(M\) corresponding to such a 1-path so that, among all such bases, \(\abs{B_\prec \cap I^\ast}\) is minimal.
  Since \(B_\prec\) is lexicographically smaller than \(B\), the choice of \(B\) implies \(I^\ast \nsubseteq \gset \setminus B_\prec\), that is, \(B_\prec \cap I^\ast \neq \emptyset\).
  By \cref{item:b2}, for some \(x \in B_\prec \cap I^\ast \subseteq B_\prec \setminus B\), there exists \(y \in B \setminus B_\prec\) such that \(\pqty{B_\prec \setminus \Bqty{x}} \cup \Bqty{y} \in \bases\).
  Since \(y \notin I^\ast \subseteq \gset \setminus B\), we have \(\abs{\pqty{\pqty{B_\prec \setminus \Bqty{x}} \cup \Bqty{y}} \cap I^\ast} = \abs{B_\prec \cap I^\ast} - 1\).
  Moreover, \(x \neq e\) since \(e \notin I^\ast\).
  Hence the minimality of \(\abs{B_\prec \cap I^\ast}\) is contradicted.

  By the above argument, \(P\) traverses no non-terminal nodes with label \(e\) in \(\zdd{\bases}[\gset][\preceq]\) for all \(e \in \gset \setminus \pqty{B \cup I^\ast}\).
  Similar to earlier observations, the 1-path in \(\bdd{\isets^\prime}\) corresponding to the updated \(P\) represents the set family that contains \(I^\ast\).
  Therefore, there is a 1-path in \(\bdd{\isets^\prime}\) representing \(I^\ast\), and we have \(\isets^\ast \subseteq \isets^\prime\).
\end{proof}

\Cref{thm:clutter_zdd_and_clutter_bdd,thm:clutter_bdd,thm:isets_and_bases,thm:bases_zdd_and_dual_isets_bdd} collectively establish \cref{thm:intro_structure}.
\Cref{fig:dd_repr_matroid} focuses on the relations of the sizes among the eight different representations.

\section{Widths of BDDs and ZDDs}\label{sec:width}

Let \(\gset\) be a set \(\Bqty{e_1, \ldots, e_n}\), \(M\) be a matroid on \(\gset\), and \(\preceq\) be a total order on \(\gset\) such that \(e_1 \prec \cdots \prec e_n\).
Recall from \cref{sec:introduction} that, for \(i \in \Bqty{0, \ldots, n - 1}\), the \(i\)th width of a BDD/ZDD for a subset family of \(\gset\) with respect to \(\pqty{\gset, \preceq}\) is defined as the number of non-terminal nodes labeled with \(e_{i + 1}\), and the width of the BDD/ZDD is defined as the maximum width among all levels.
Moreover, we denote \(\Bqty{e_1, \ldots, e_i}\) by \(\ei\).
This section analyzes the widths of BDDs/ZDDs representing matroids.

Before the analysis, we focus on the structure of the ZDD \(\zdd{\fun{\isets}{M}}[\gset][\preceq]\) for the collection \(\fun{\isets}{M}\) of independent sets.
If the label of the root is \(e_1\), then the 0-child of \(\zdd{\fun{\isets}{M}}[\gset][\preceq]\) represents
\[
  \set{I \in \fun{\isets}{M}}{e_1 \notin I}
  = \set{I \subseteq \gset \setminus \Bqty{e_1}}{I \in \fun{\isets}{M}}
  = \fun{\isets}{\deletion{M}{\Bqty{e_1}}}\text{.}
\]
Similarly, the 1-child represents
\[
  \set{I \setminus \Bqty{e_1}}{e_1 \in I \in \fun{\isets}{M}}
  = \set{I \subseteq \gset \setminus \Bqty{e_1}}{I \cup \Bqty{e_1} \in \fun{\isets}{M}}
  = \fun{\isets}{\contraction{M}{\Bqty{e_1}}}
\]
since \(\Bqty{e_1}\) is a basis of \(\restrict{M}{\Bqty{e_1}}\) by \Cref{prop:reachablity_dd}.
Thus, for \(i \in \Bqty{0, \ldots, n - 1}\), the sub-ZDD of \(\zdd{\fun{\isets}{M}}[\gset][\preceq]\) rooted by some non-terminal node with label \(e_{i + 1}\) represents the collection of independent sets of some minor of \(M\) on \(\eivar\).
Moreover, the sub-ZDDs rooted by different non-terminal nodes with label \(e_{i + 1}\) represent different collections of independent sets on \(\eivar\).
Therefore, the \(i\)th width of \(\zdd{\fun{\isets}{M}}[\gset][\preceq]\) is bounded by the number of minors of \(M\) on \(\eivar\).
Similarly, for \(\zdd{\fun{\bases}{M}}[\gset][\preceq]\), \(\bdd{\fun{\isets}{M}}[\gset][\preceq]\), and \(\bdd{\fun{\bases}{M}}[\gset][\preceq]\), the \(i\)th width is also bounded by the number of minors on \(\eivar\).
Note that, as mentioned in \cref{sec:related_work}, this interpretation generalizes the discussion by \citet{10.1007/BFb0015427} to general matroids.

The width can vary depending on the total order \(\preceq\) on \(\gset\).
For several classes of matroids, \cref{sec:width_with_lambda} provides upper bounds on widths with respect to any total order, and \cref{sec:width_with_pathwidth} provides for orders that make the width small.
Moreover, \cref{sec:rank_oracle} shows how to implement a rank oracle using the ZDD structure described above.

\subsection{Upper bounds on widths depending only on connectivity functions}\label{sec:width_with_lambda}

A trivial upper bound for the number of minors on \(\eivar\) is \(2^i\).
In this section, we provide more precise upper bounds in terms of connectivity functions of matroids by proving \cref{thm:intro_width}.
Recall from \cref{sec:introduction} that the connectivity function \(\lambda_M \colon 2^{\gset} \to \mathbb{N}\) of a matroid \(M\) is defined by \(\confun_{M}\pqty{X} \coloneqq \rankm_{M}\pqty{X} + \rankm_{M}\pqty{\gset \setminus X} - \rankm\pqty{M}\) for \(X \subseteq \gset\).
We write \(\lambda_M\) as \(\lambda\) if \(M\) is clear from the context.
This section finds upper bounds on the number of minors that depends only on \(\confun\pqty{\ei}\).

\subsubsection{Related work}\label{sec:related_work}

\Citet{10.1007/BFb0015427} provided an upper bound for cycle matroids.
In the BDD for the family of spanning trees for a simple connected undirected graph \(G\) with the edge set \(E\), each node with label \(e_i\) corresponds to a certain minor of \(G\) on \(\eivar\).
Moreover, limited \(2\)-isomorphic minors correspond to the same node.
Thus the \(i\)th width of the BDD for all spanning trees of \(G\) is equal to the number of minors of \(G\) on \(\eivar\).
This result can be interpreted regarding to BDDs for the collection of bases of the cycle matroid \(\fun{M}{G}\).
Eventually, the number of minors of \(\fun{M}{G}\) on \(\eivar\) is bounded by the \(\pqty{\fun{V}{G \bqty{\ei}} \cap \fun{V}{G \bqty{\eivar}}}\)-th Bell number, where \(G \bqty{X}\) denotes the induced subgraph of \(G\) on \(X \subseteq E\).

The class of matroids known as the strongly pigeonhole class is closely related to this study.
For a matroid \(M\) and \(S \subseteq \fun{\gset}{M}\), define the equivalence relation \(\sim_{M, S}\) on \(2^S\) as follows.
For \(X_1, X_2 \subseteq S\), the sets \(X_1\) and \(X_2\) are equivalent, written \(X_1 \sim_{M, S} X_2\), if and only if the independence of \(X_1 \cup Y\) and \(X_2 \cup Y\) are equivalent in \(M\) for all \(Y \subseteq \fun{\gset}{M} \setminus S\).
A class \(\mathcal{M}\) of matroids is called \emph{strongly pigeonhole}~\cite[Definition 2.10]{Funk2022} if there is a function \(f \colon \mathbb{N} \setminus \Bqty{0} \to \mathbb{N} \setminus \Bqty{0}\) such that, for any matroid \(M \in \mathcal{M}\), positive integer \(\lambda^\prime\), and \(S \subseteq \fun{\gset}{M}\) with \(\confun_{M}\pqty{S} \leq \lambda^\prime\), the size of the quotient set by \(\sim_{M, S}\) is always at most \(\fun{f}{\lambda^\prime}\).
Thus, if a class \(\mathcal{M}\) of matroids is strongly pigeonhole, then the width of BDDs/ZDDs for a matroid \(M \in \mathcal{M}\) can be bounded by a value depending only on the connectivity function of \(M\).
\Citet[Proposition 10 and Theorems 18, 22, and 34]{Funk2023} showed that several classes of matroids are strongly pigeonhole such as fundamental transversal matroids, \(3\)-connected bicircular matroids, and \(3\)-connected \(H\)-gain-graphic matroids for a finite group \(H\).

As a folklore, for a prime power \(q\), the number of minors of a \(\finitefield\pqty{q}\)-representable matroid \(M\) on \(\eivar\) is at most \(\sum_{d = 0}^{\confun_{M}\pqty{\ei}} \qbinom{\confun_{M}\pqty{\ei}}{d}_q\) (see \cite[Theorem 2]{KRAL2012913} and \cite[Theorems 18]{Funk2023}).
Here the Gaussian binomial coefficient \(\qbinom{n}{k}_q\) is defined by
\[
  \qbinom{n}{k}_q \coloneqq \frac{\pqty{q^n - 1} \pqty{q^{n - 1} - 1} \cdots \pqty{q^{n - k + 1} - 1}}{\pqty{q - 1} \pqty{q^2 - 1} \cdots \pqty{q^k - 1}}
\]
for \(n, k \in \mathbb{N}\) with \(0 \leq k \leq n\).

\subsubsection{Proof of \texorpdfstring{\cref{thm:intro_width}}{Theorem \ref{thm:intro_width}}}

The following holds for uniform matroids (see \cref{eg:partition}):

\begin{theorem}\label{thm:width_uniform}
  Let \(M\) be a uniform matroid on the ground set \(\gset\).
  The number of minors of \(M\) on \(\eivar\) is \(\confun_{M}\pqty{\ei} + 1\).
\end{theorem}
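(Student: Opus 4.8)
The plan is to count the distinct minors of $M$ on $\eivar$ directly, using the fact that a uniform matroid has a very rigid structure. Recall that a uniform matroid $U_{r,n}$ is determined entirely by two numbers: the size of its ground set and its rank. Crucially, every minor of a uniform matroid is again uniform. To see this, observe that deleting an element from $U_{r,n}$ yields $U_{r',n-1}$ where $r' = \min\{r, n-1\}$, and contracting an element yields $U_{r-1,n-1}$ when $r \geq 1$ (or $U_{0,n-1}$ when $r = 0$). In either case the result depends only on the current rank and ground-set size, never on \emph{which} element is deleted or contracted. Consequently, a minor of $M$ on the fixed set $\eivar$ is uniform on a ground set of the fixed size $n - i$, so it is completely determined by its rank alone.

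**Counting the achievable ranks.**

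With the ground-set size pinned to $n - i$, the number of distinct minors on $\eivar$ equals the number of distinct \emph{ranks} achievable by performing $i$ operations (each element of $\ei$ is either deleted or contracted). First I would track how the rank evolves: contracting a non-loop element drops the rank by one, while deletion leaves the rank unchanged (as long as enough rank-supporting elements remain). For the uniform matroid $M = U_{r,n}$, if we contract $c$ of the first $i$ elements and delete the remaining $i - c$, the resulting rank is $\max\{0, \min\{r, n-i\} - (\text{drop from contractions})\}$; the key point is that by choosing $c$ to range over its feasible values, the achievable ranks of the minor on $\eivar$ form a contiguous block of integers. The main computation is to identify the smallest and largest achievable rank and verify the count of integers between them is exactly $\confun_M(\ei) + 1$.

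**Relating the count to the connectivity function.**

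I would then compute $\confun_M(\ei) = \rankm_M(\ei) + \rankm_M(\eivar) - \rankm(M)$ explicitly for the uniform matroid, using $\rankm_M(X) = \min\{|X|, r\}$. This gives $\rankm_M(\ei) = \min\{i, r\}$ and $\rankm_M(\eivar) = \min\{n - i, r\}$ and $\rankm(M) = r$, so $\confun_M(\ei) = \min\{i, r\} + \min\{n-i, r\} - r$. The plan is to show this quantity equals exactly (maximum achievable minor rank) $-$ (minimum achievable minor rank), so that the number of distinct ranks, and hence the number of distinct minors, is $\confun_M(\ei) + 1$. The maximum minor rank is attained by deleting all $i$ elements (giving rank $\min\{n-i, r\}$), and the minimum is attained by contracting as many as possible (giving rank $\max\{0, \min\{n-i,r\} - \min\{i,r\}\} = \min\{n-i,r\} - \confun_M(\ei)$ after simplification).

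**The main obstacle.**

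The routine part is the minor-rank bookkeeping; the delicate part will be handling the boundary cases where the various minima and maxima in the rank formula change which argument is active — specifically when $i$ crosses $r$ or $n - i$ crosses $r$, and when contractions would push the rank below zero. In these regimes one must check that the achievable ranks still form an unbroken interval of the claimed length and that no achievable rank is double-counted or omitted. I expect the cleanest route is to argue that the set of achievable minor ranks is precisely the integer interval $[\min\{n-i,r\} - \confun_M(\ei),\ \min\{n-i,r\}]$ and then verify by the explicit $\confun_M(\ei)$ formula that this interval always has the correct cardinality, dispatching the corner cases by the algebraic identity $\min\{i,r\} + \min\{n-i,r\} - r = \confun_M(\ei)$ rather than by separate case analysis wherever possible.
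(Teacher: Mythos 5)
Your proposal is correct and follows essentially the same route as the paper: both arguments reduce to observing that the minor \(\minori{X}\) depends only on \(\abs{X}\) (equivalently, on the resulting rank \(\max\Bqty{\rankm\pqty{M} - \abs{X}, 0}\) truncated at \(n - i\)), that the achievable values form a contiguous integer range, and that its length is \(\confun_{M}\pqty{\ei} + 1\) by the identity \(\min\Bqty{i, \rankm\pqty{M}} + \min\Bqty{n - i, \rankm\pqty{M}} - \rankm\pqty{M} = \confun_{M}\pqty{\ei}\). The paper phrases this as a case analysis on the collection of independent sets rather than on the rank of the minor, but the bookkeeping is the same.
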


\begin{proof}
  For \(X \subseteq \ei\), the collection of bases of \(\restrict{M}{X}\) is the family of subsets \(B \subseteq X\) of size \(\min\Bqty{\abs{X}, \rankm\pqty{M}}\).
  Hence the collection of independent sets of \(\restrict{M}{X}\) is the family of subsets \(I \subseteq \eivar\) such that \(\abs{I} + \min\Bqty{\abs{X}, \rankm\pqty{M}} \leq \rankm\pqty{M}\), that is, \(\abs{I} \leq \max\Bqty{\rankm\pqty{M} - \abs{X}, 0}\).
  The collection of independent sets on \(\eivar\) depends on the size of \(X\):
  \begin{itemize}
    \item If \(\abs{\eivar} \leq \rankm\pqty{M} - \abs{X}\), then the collection is \(2^{\eivar}\).
    \item If \(\rankm\pqty{M} - \abs{\eivar} < \abs{X} < \rankm\pqty{M}\), then the collection is \(\set{I \subseteq \eivar}{\abs{I} \leq \rankm\pqty{M} - \abs{X}}\).
          Note that \(\eivar\) is not independent in \(\minori{X}\) since \(\abs{X} > \rankm\pqty{M} - \abs{\eivar}\), and all subsets of \(\eivar\) of size one are independent in \(\minori{X}\) since \(\abs{X} < \rankm\pqty{M}\).
    \item If \(\abs{X} \geq \rankm\pqty{M}\), then the only independent set in \(\minori{X}\) is \(\emptyset\).
  \end{itemize}
  Therefore, the number of minors on \(\eivar\) is
  \[
    \begin{split}
      &\min\Bqty{\abs{\ei}, \rankm\pqty{M}} - \max\Bqty{\rankm\pqty{M} - \abs{\eivar}, 0} + 1 \\
      &= \min\Bqty{\abs{\ei}, \rankm\pqty{M}} + \min\Bqty{\abs{\eivar}, \rankm\pqty{M}} - \rankm\pqty{M} + 1 \\
      &= \rankm_{M}\pqty{\ei} + \rankm_{M}\pqty{\eivar} - \rankm\pqty{M} + 1 = \confun_{M}\pqty{\ei} + 1\text{.}\qedhere
    \end{split}
  \]
\end{proof}

\begin{myremark}
  This theorem can be derived from \citet[Proposition 10]{Funk2023}.
  However, the proposition focuses on strongly pigeonhole classes and does not directly imply the number of minors.
  \Cref{thm:width_uniform} explicitly evaluates the number of minors of uniform matroids.
\end{myremark}

For a free matroid (see \cref{eg:free}) \(M\) on the ground set \(\gset\) and all \(X \subseteq E\), the value \(\confun_{M}\pqty{X}\) is zero.
Thus we obtain the following since all free matroids are uniform:

\begin{corollary}\label{cor:width_free}
  Let \(M\) be a free matroid on the ground set \(\gset\).
  The number of minors of \(M\) on \(\eivar\) is one.
\end{corollary}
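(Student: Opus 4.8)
The plan is to deduce this directly from \cref{thm:width_uniform} by recognizing that every free matroid is a uniform matroid whose connectivity function vanishes identically. First I would observe that, for a free matroid \(M\), the defining property \(\fun{\isets}{M} = 2^{\gset}\) means every subset of \(\gset\) is independent; in particular \(\rankm_{M}\pqty{X} = \abs{X}\) for all \(X \subseteq \gset\), so \(\rankm\pqty{M} = \abs{\gset}\) and \(M\) coincides with the uniform matroid \(U_{\abs{\gset}, \abs{\gset}}\). This places \(M\) squarely within the scope of \cref{thm:width_uniform}.

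Next I would compute the connectivity function using \(\rankm_{M}\pqty{X} = \abs{X}\). For any \(X \subseteq \gset\) we get
\[
  \confun_{M}\pqty{X} = \rankm_{M}\pqty{X} + \rankm_{M}\pqty{\gset \setminus X} - \rankm\pqty{M} = \abs{X} + \abs{\gset \setminus X} - \abs{\gset} = 0\text{.}
\]
In particular \(\confun_{M}\pqty{\ei} = 0\) for every \(i\).

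Finally, applying \cref{thm:width_uniform} with this value yields that the number of minors of \(M\) on \(\eivar\) equals \(\confun_{M}\pqty{\ei} + 1 = 1\), which is exactly the claim. Since the corollary is an immediate specialization of the uniform case, there is no genuine obstacle; the only point requiring any care is confirming that a free matroid really satisfies the uniform-matroid hypothesis of \cref{thm:width_uniform}, and this follows at once from \(\fun{\isets}{M} = 2^{\gset}\).
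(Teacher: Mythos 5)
Your proposal is correct and follows exactly the paper's route: the paper likewise notes that every free matroid is uniform with \(\confun_{M}\pqty{X} = 0\) for all \(X \subseteq \gset\) and then invokes \cref{thm:width_uniform} to conclude the count is \(\confun_{M}\pqty{\ei} + 1 = 1\). Your explicit computation of the rank function and the identification with \(U_{\abs{\gset}, \abs{\gset}}\) just spells out details the paper leaves implicit.
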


Now the following holds regarding the direct sum of matroids:

\begin{lemma}\label{lem:direct_sum_width}
  Let \(M_1\) and \(M_2\) be matroids with \(\fun{\gset}{M_1} \cap \fun{\gset}{M_2} = \emptyset\).
  For \(X_1 \subseteq \fun{\gset}{M_1}\) (resp. \(X_2 \subseteq \fun{\gset}{M_2}\)), let \(m_1\) (resp. \(m_2\)) be the number of minors of \(M_1\) (resp. \(M_2\)) on \(\fun{\gset}{M_1} \setminus X_1\) (resp. \(\fun{\gset}{M_2} \setminus X_2\)).
  Then the number of minors of \(M_1 \oplus M_2\) on \(\pqty{\fun{\gset}{M_1} \cup \fun{\gset}{M_2}} \setminus \pqty{X_1 \cup X_2}\) is \(m_1 m_2\).
\end{lemma}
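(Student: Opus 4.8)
The plan is to exploit the fact that deletion and contraction each distribute over a direct sum, so that every minor of $M_1 \oplus M_2$ on the target set factors \emph{uniquely} as a direct sum of a minor of $M_1$ and a minor of $M_2$; the count $m_1 m_2$ then follows from a bijection between pairs of minors and minors of the direct sum.

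First I would record how the two minor operations interact with a direct sum. Write $M \coloneqq M_1 \oplus M_2$ and, for any $A \subseteq X_1 \cup X_2$, set $A_j \coloneqq A \cap \fun{\gset}{M_j}$. Using that the independent sets of $M$ are exactly the unions $I_1 \cup I_2$ with $I_j \in \fun{\isets}{M_j}$, I would verify the two identities $\deletion{M}{A} = \pqty{\deletion{M_1}{A_1}} \oplus \pqty{\deletion{M_2}{A_2}}$ and $\contraction{M}{A} = \pqty{\contraction{M_1}{A_1}} \oplus \pqty{\contraction{M_2}{A_2}}$. The deletion identity is immediate from the definition of restriction, since an independent set of $M$ contained in $A$ splits along $A_1$ and $A_2$. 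The contraction identity then follows because a basis of $\restrict{M}{A}$ splits as $B_1 \cup B_2$ with $B_j \in \fun{\bases}{\restrict{M_j}{A_j}}$, and $I_1 \cup I_2$ with $B_1 \cup B_2$ is independent in $M$ exactly when each $I_j \cup B_j$ is independent in $M_j$; hence the collection $\isets_A$ defining the contraction decomposes as a direct sum as well.

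Next I would apply these identities to an arbitrary minor on the combined set. Any minor of $M$ on $\pqty{\fun{\gset}{M_1} \setminus X_1} \cup \pqty{\fun{\gset}{M_2} \setminus X_2}$ arises as $\minor{M}{A}{B}$ for some partition $X_1 \cup X_2 = A \sqcup B$; with $A_j = A \cap \fun{\gset}{M_j}$ and $B_j = B \cap \fun{\gset}{M_j}$, the two identities give $\minor{M}{A}{B} = N_1 \oplus N_2$, where $N_j = \minor{M_j}{A_j}{B_j}$ is a minor of $M_j$ on $\fun{\gset}{M_j} \setminus X_j$. This shows the map sending a pair $(N_1, N_2)$ to $N_1 \oplus N_2$ is surjective onto the minors of $M$ on the combined set.

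Finally I would argue injectivity, so that the number of minors equals the number of pairs, namely $m_1 m_2$. The key point is that the direct-sum decomposition is unique once the ground-set partition $\pqty{\fun{\gset}{M_1} \setminus X_1} \sqcup \pqty{\fun{\gset}{M_2} \setminus X_2}$ is fixed: given the matroid $N_1 \oplus N_2$, the independent sets of $N_j$ are recovered as precisely those independent sets of $N_1 \oplus N_2$ contained in $\fun{\gset}{M_j} \setminus X_j$. Hence distinct pairs yield distinct direct sums and the correspondence is a bijection. I expect the main thing to be careful about to be exactly this bookkeeping — that $m_1$ and $m_2$ count \emph{distinct} minors rather than the underlying choices of $(A_j, B_j)$ — but since the bijection is at the level of the resulting matroids, the counts multiply cleanly to give $m_1 m_2$.
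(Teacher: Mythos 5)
Your proposal is correct and follows essentially the same route as the paper: the paper's proof is a chain of set-cardinality equalities that encodes exactly your bijection, namely that every minor of \(M_1 \oplus M_2\) on the target set decomposes as a direct sum of a minor of \(M_1\) and a minor of \(M_2\), and that this decomposition is unique so the counts multiply. Your version merely makes the surjectivity (deletion and contraction distribute over \(\oplus\)) and the injectivity (recovering each \(\fun{\isets}{N_j}\) by restricting to \(\fun{\gset}{M_j} \setminus X_j\)) explicit where the paper compresses them into a single equality of sets.
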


\begin{proof}
  The number of minors on \(\pqty{\fun{\gset}{M_1} \cup \fun{\gset}{M_2}} \setminus \pqty{X_1 \cup X_2}\) is
  \[
    \begin{split}
      &\abs{\set{\fun{\isets}{\minor{M_1 \oplus M_2}{\pqty{\pqty{X_1 \cup X_2} \setminus \pqty{Y_1 \cup Y_2}}}{Y_1 \cup Y_2}}}{\text{\(Y_1 \subseteq X_1\) and \(X_2 \subseteq X_2\)}}} \\
      &= \abs*{%
        \set*{\Bqty*{%
          \begin{array}{@{}l@{}}
            I_1 \cup I_2 \subseteq \pqty{\fun{\gset}{M_1} \cup \fun{\gset}{M_2}} \setminus \pqty{X_1 \cup X_2} \\
            \mathrel{:} \text{\(\restrict{M_1}{Y_1}\) has a basis \(B_1\) such that \(I_1 \cup B_1 \in \fun{\isets}{M_1}\) and} \\
            \phantom{\mathrel{:} \text{}} \text{\(\restrict{M_2}{Y_2}\) has a basis \(B_2\) such that \(I_2 \cup B_2 \in \fun{\isets}{M_2}\)}
          \end{array}%
        }}{\text{\(Y_1 \subseteq X_1\) and \(Y_2 \subseteq X_2\)}}
      } \\
      &= \abs*{%
        \begin{array}{@{}c@{}}
          \set{\set{I_1 \subseteq \fun{\gset}{M_1} \setminus X_1}{\text{\(\restrict{M_1}{Y_1}\) has a basis \(B_1\) such that \(I_1 \cup B_1 \in \fun{\isets}{M_1}\)}}}{Y_1 \subseteq X_1} \\
          \times \\
          \set{\set{I_2 \subseteq \fun{\gset}{M_2} \setminus X_2}{\text{\(\restrict{M_2}{Y_2}\) has a basis \(B_2\) such that \(I_2 \cup B_2 \in \fun{\isets}{M_2}\)}}}{Y_2 \subseteq X_2}
        \end{array}%
      } \\
      &= m_1 m_2\text{.}\qedhere
    \end{split}
  \]
\end{proof}

By this lemma, we can show the result for partition matroids (see \cref{eg:partition}):

\begin{theorem}\label{thm:width_partition}
  Let \(M\) be a partition matroid on the ground set \(\gset\).
  The number of minors of \(M\) on \(\eivar\) is at most \(2^{\confun_{M}\pqty{\ei}}\).
\end{theorem}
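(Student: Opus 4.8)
The plan is to reduce everything to the two results already in hand for this section: the exact count of minors for a uniform matroid (\cref{thm:width_uniform}) and the multiplicativity of minor counts under direct sums (\cref{lem:direct_sum_width}). By definition a partition matroid decomposes as \(M = M_1 \oplus \cdots \oplus M_k\), where each \(M_j\) is a uniform matroid on a block \(\fun{\gset}{M_j}\) of a partition of \(\gset\). First I would set \(X_j \coloneqq \ei \cap \fun{\gset}{M_j}\), so that the prefix \(\ei\) splits as the disjoint union \(X_1 \cup \cdots \cup X_k\), and correspondingly \(\eivar\) splits into the blocks \(\fun{\gset}{M_j} \setminus X_j\). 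Iterating \cref{lem:direct_sum_width} over the \(k\) summands then shows that the number of minors of \(M\) on \(\eivar\) equals the product \(\prod_{j=1}^{k} m_j\), where \(m_j\) is the number of minors of \(M_j\) on \(\fun{\gset}{M_j} \setminus X_j\).

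Next I would invoke \cref{thm:width_uniform} on each uniform summand, which gives \(m_j = \confun_{M_j}\pqty{X_j} + 1\) exactly. The one structural fact I still need is that the connectivity function is additive across a direct-sum decomposition, namely
\[
  \confun_{M}\pqty{\ei} = \sum_{j=1}^{k} \confun_{M_j}\pqty{X_j}\text{.}
\]
This follows directly from the additivity of the rank function over direct sums, \(\rankm_{M}\pqty{Y} = \sum_{j} \rankm_{M_j}\pqty{Y \cap \fun{\gset}{M_j}}\), applied to \(Y = \ei\), to \(Y = \eivar\), and to \(Y = \gset\) inside the definition \(\confun_{M}\pqty{\ei} = \rankm_{M}\pqty{\ei} + \rankm_{M}\pqty{\eivar} - \rankm\pqty{M}\); the cross terms cancel blockwise.

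Combining these, the count becomes \(\prod_{j=1}^{k} \pqty{\confun_{M_j}\pqty{X_j} + 1}\). To finish I would apply the elementary inequality \(a + 1 \leq 2^{a}\), valid for every non-negative integer \(a\) (the connectivity function takes values in \(\mathbb{N}\), so each \(\confun_{M_j}\pqty{X_j} \geq 0\)), factorwise:
\[
  \prod_{j=1}^{k} \pqty{\confun_{M_j}\pqty{X_j} + 1}
  \leq \prod_{j=1}^{k} 2^{\confun_{M_j}\pqty{X_j}}
  = 2^{\sum_{j=1}^{k} \confun_{M_j}\pqty{X_j}}
  = 2^{\confun_{M}\pqty{\ei}}\text{,}
\]
which is the desired bound.

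I do not expect a genuine obstacle here, since the two heavy lemmas do all the work; the proof is essentially a bookkeeping argument. The only two points meriting explicit verification are the additivity of \(\confun\) over the direct sum (which I would state and justify from rank additivity rather than treat as folklore) and the inequality \(a+1 \le 2^{a}\). If a fully rigorous iterate of \cref{lem:direct_sum_width} to \(k\) summands is wanted, I would phrase it as a short induction on \(k\), associating \(M_1 \oplus \cdots \oplus M_{k}\) as \(\pqty{M_1 \oplus \cdots \oplus M_{k-1}} \oplus M_{k}\) so that each step is an application of the two-summand lemma.
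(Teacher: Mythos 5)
Your proposal is correct and follows essentially the same route as the paper: decompose the partition matroid into uniform summands, combine \cref{lem:direct_sum_width} with the exact count from \cref{thm:width_uniform}, and finish with \(a+1\le 2^{a}\). The paper simply carries out your ``additivity of \(\confun\) over direct sums'' step inline, by writing each \(\confun_{U_{r_j,n_j}}\pqty{\ei\cap G_j}\) explicitly via \(\min\)-expressions and summing them back to \(\rankm_{M}\pqty{\ei}+\rankm_{M}\pqty{\eivar}-\rankm\pqty{M}\), which is the same rank-additivity argument you state.
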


\begin{proof}
  There exist uniform matroids \(U_{r_1, n_1}, \ldots, U_{r_k, n_k}\) such that \(M = U_{r_1, n_1} \oplus \cdots \oplus U_{r_k, n_k}\).
  Let \(G_j\) be the ground set of size \(n_j\) of \(U_{r_j, n_j}\) for \(j \in \Bqty{1, \ldots, k}\).
  By \Cref{thm:width_uniform,lem:direct_sum_width}, the number \(m\) of minors on \(\eivar\) satisfies that
  \[
    \begin{split}
      \log_2{m}
      &= \sum_{j = 1}^k \log_2{\pqty{\confun_{U_{r_j, n_j}}\pqty{\ei \cap G_j} + 1}} \\
      &= \sum_{j = 1}^k \log_2{\pqty{\min\Bqty{\abs{\ei \cap G_j}, r_j} + \min\Bqty{n_j - \abs{\ei \cap G_j}, r_j} - r_j + 1}} \\
      &\leq \sum_{j = 1}^k \pqty{\min\Bqty{\abs{\ei \cap G_j}, r_j} + \min\Bqty{n_j - \abs{\ei \cap G_j}, r_j} - r_j} \\
      &= \rankm_{M}\pqty{\ei} + \rankm_{M}\pqty{\eivar} - \rankm\pqty{M} = \confun_{M}\pqty{\ei}\text{.}
    \end{split}
  \]
  Here we use the fact that every \(n \in \mathbb{N}\) satisfies \(n + 1 \leq 2^n\).
\end{proof}

Now we introduce the Gale order.
For a set \(E\) and \(k \in \Bqty{0, \ldots, \abs{E}}\), the \emph{Gale order} \(\galeorder{\preceq}\) on a family of \(k\)-element subsets of \(E\) with respect to a total order \(\preceq\) on \(E\) is defined as follows.
For \(k\)-element subset \(X \subseteq E\), let \(\sequence\pqty{X}\) be the sequence of elements in \(X\) arranged in ascending order with respect to \(\preceq\) and \({\sequence\pqty{X}}_j\) be the \(j\)th element of \(\sequence\pqty{X}\).
Then, for \(k\)-element subsets \(X_1, X_2 \subseteq E\), define \(\galeorder{\preceq}\) such that \(X_1 \galeorder{\preceq} X_2\) if and only if \({\sequence\pqty{X_1}}_j \preceq {\sequence\pqty{X_2}}_j\) for all \(j \in \Bqty{1, \ldots, k}\).
For a matroid \(M\) and a total order \(\preceq\) on \(\fun{\gset}{M}\), it is known that there uniquely exists a maximal basis \(\galebasis\pqty{M, \preceq} \in \fun{\bases}{M}\) with respect to \(\galeorder{\preceq}\)~\cite[Theorem 3]{Gale1968}, and hence \(B \galeorder{\preceq} \galebasis\pqty{M, \preceq}\) for all bases \(B \in \fun{\bases}{M}\).
This is called the \emph{Gale basis}.
In this paper, we denote by \(\galebasis\pqty{M, \preceq}\) the Gale basis of a matroid \(M\) with respect to a total order \(\preceq\).

For a nested matroid (see \cref{eg:nested}) \(M\), there exists a total order \(\leftjustified\) on \(\fun{\gset}{M}\) such that, for all \(B \subseteq \fun{\gset}{M}\) with \(\abs{B} = \rankm\pqty{M}\), the set \(B\) is a basis of \(M\) if and only if \(B \galeorder{\leftjustified} \galebasis\pqty{M, \leftjustified}\) (see \cite[Section 2.2]{bonin2024characterization}).
In this paper, we call such an order \(\leftjustified\) a \emph{left-justified order} of \(M\).
Any left-justified order preserves its properties in the minors of \(M\).
Here, for a totally ordered set \(\pqty{E, \preceq}\) and \(X \subseteq E\), we denote by \(\pqty{X, \preceq}\) the totally ordered set induced from \(\pqty{E, \preceq}\).

\begin{lemma}\label{lem:order_in_nested}
  Let \(M\) be a nested matroid and \(\leftjustified\) be a left-justified order of \(M\).
  Then, for all disjoint \(X, Y \subseteq \fun{\gset}{M}\) and all \(B \subseteq \fun{\gset}{M} \setminus \pqty{X \cup Y}\) with \(\abs{B} = \rankm\pqty{\minor{M}{X}{Y}}\), the set \(B\) is a basis of \(\minor{M}{X}{Y}\) if and only if \(B \galeorder{\leftjustified} \galebasis\pqty{\minor{M}{X}{Y}, \leftjustified}\).
\end{lemma}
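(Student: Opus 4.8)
The plan is to reduce to a single deletion or contraction and induct on $\abs{X \cup Y}$. The key one-step claim I would isolate is: if $\leftjustified$ is a left-justified order of a matroid $N$ — that is, a size-$\rankm\pqty{N}$ set $B$ is a basis of $N$ exactly when $B \galeorder{\leftjustified} \galebasis\pqty{N, \leftjustified}$ — then the order induced on $\fun{\gset}{N} \setminus \Bqty{e}$ is left-justified for both $N' \coloneqq \deletion{N}{\Bqty{e}}$ and $N' \coloneqq \contraction{N}{\Bqty{e}}$. Granting this, the lemma follows by induction: since minors are independent of the order of operations, $\minor{M}{X}{Y}$ is obtained by repeatedly peeling one element of $X$ off by deletion or one element of $Y$ off by contraction, the induced order stays left-justified at each step, and the base case $X = Y = \emptyset$ is the hypothesis on $M$. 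I would phrase the claim purely in terms of possessing a left-justified order rather than nestedness, so that it reapplies verbatim to each intermediate minor; the Gale basis used throughout exists for every matroid by Gale's theorem~\cite[Theorem 3]{Gale1968}.

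For the one-step claim I split on whether $e$ is a loop or a coloop, which fixes the shape of the bases of $N'$. In the \emph{avoiding} case ($\deletion{N}{\Bqty{e}}$ with $e$ not a coloop, or $\contraction{N}{\Bqty{e}}$ with $e$ a loop) the bases of $N'$ are precisely the bases of $N$ not containing $e$, with $\rankm\pqty{N'} = \rankm\pqty{N}$. In the \emph{adding} case ($\contraction{N}{\Bqty{e}}$ with $e$ not a loop, or $\deletion{N}{\Bqty{e}}$ with $e$ a coloop) the bases of $N'$ are precisely the size-$\pqty{\rankm\pqty{N} - 1}$ sets $C$ with $C \cup \Bqty{e} \in \fun{\bases}{N}$. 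In either case the forward implication is immediate: $\galebasis\pqty{N', \leftjustified}$ is the Gale-maximum basis of $N'$, so every basis of $N'$ lies below it. The content is the converse, which I would handle by transitivity against $\galebasis\pqty{N, \leftjustified}$. In the avoiding case, if $B \galeorder{\leftjustified} \galebasis\pqty{N', \leftjustified}$ then, since $\galebasis\pqty{N', \leftjustified}$ is itself a basis of $N$ avoiding $e$ and hence $\galeorder{\leftjustified} \galebasis\pqty{N, \leftjustified}$, transitivity gives $B \galeorder{\leftjustified} \galebasis\pqty{N, \leftjustified}$; thus $B$ is a basis of $N$ avoiding $e$, that is, a basis of $N'$.

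The adding case requires the one genuinely technical ingredient, which I expect to be the main obstacle: a monotonicity lemma stating that inserting a common element preserves the Gale order, namely that if $C' \galeorder{\leftjustified} C$ are equal-size sets with $e \notin C \cup C'$, then $C' \cup \Bqty{e} \galeorder{\leftjustified} C \cup \Bqty{e}$. I would prove this by comparing the sorted sequences of the two augmented sets: inserting $e$ shifts positions by at most one on each side, and since $\sequence\pqty{C'}_j \leftjustified \sequence\pqty{C}_j$ forces at least as many elements below $e$ on the $C'$ side, a short case analysis on the two insertion positions shows that the componentwise inequality is preserved. Given this, the converse in the adding case reads: if $C \galeorder{\leftjustified} \galebasis\pqty{N', \leftjustified}$, then $C \cup \Bqty{e} \galeorder{\leftjustified} \galebasis\pqty{N', \leftjustified} \cup \Bqty{e} \galeorder{\leftjustified} \galebasis\pqty{N, \leftjustified}$, where the last step holds because $\galebasis\pqty{N', \leftjustified} \cup \Bqty{e}$ is a basis of $N$; hence $C \cup \Bqty{e} \in \fun{\bases}{N}$ and $C$ is a basis of $N'$. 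Everything outside this monotonicity lemma is loop/coloop bookkeeping and transitivity, so the proof really hinges on establishing the Gale-order monotonicity cleanly.
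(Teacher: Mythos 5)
Your proposal is correct and follows essentially the same route as the paper: reduce to single-element deletions and contractions, split on whether \(e\) is a loop or coloop, handle the deletion case by transitivity of \(\galeorder{\leftjustified}\), and handle the contraction case via the fact that adjoining a common element preserves the Gale order. The only real difference is presentational: the monotonicity statement you isolate as the key technical ingredient (\(C' \galeorder{\leftjustified} C\) implies \(C' \cup \Bqty{e} \galeorder{\leftjustified} C \cup \Bqty{e}\)) is used implicitly and without proof in the paper's step for \(\contraction{M}{\Bqty{e}}\), so making it explicit is a minor tightening rather than a new approach.
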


\begin{proof}
  Since every minor of nested matroids is nested~\cite[Lemma 6 and 9]{Oxley_Prendergast_Row_1982}, it suffices to show that both \cref{item:contraction_in_lem}~\(\contraction{M}{\Bqty{e}}\) and \cref{item:deletion_in_lem}~\(\deletion{M}{\Bqty{e}}\) satisfy the condition for any \(e \in \fun{\gset}{M}\).
  Note that we only need to show the necessity since the sufficiency is achieved directly by the properties of the Gale basis.
  \begin{enumerate}[label=(\roman*)]
    \item\label{item:contraction_in_lem}
      If \(e\) is a loop of \(M\), then we have \(\fun{\bases}{\contraction{M}{\Bqty{e}}} = \fun{\bases}{M}\).
      If not, then \(\galebasis\pqty{\contraction{M}{\Bqty{e}}, \leftjustified} \cup \Bqty{e}\) is a basis of \(M\) since \(\Bqty{e}\) is a basis of \(\restrict{M}{\Bqty{e}}\).
      Thus, for all \(B \subseteq \fun{\gset}{M} \setminus \Bqty{e}\) with \(B \galeorder{\leftjustified} \galebasis\pqty{\contraction{M}{\Bqty{e}}, \leftjustified}\), we have \(B \cup \Bqty{e} \galeorder{\leftjustified} \galebasis\pqty{\contraction{M}{\Bqty{e}}, \leftjustified} \cup \Bqty{e} \galeorder{\leftjustified} \galebasis\pqty{M, \leftjustified}\), implying that \(B \cup \Bqty{e}\) is a basis of \(M\).
    \item\label{item:deletion_in_lem}
      If \(e\) is a coloop of \(M\), then \(\deletion{M}{\Bqty{e}} = \contraction{M}{\Bqty{e}}\) (see \cite[Corollary 3.1.24]{10.1093/acprof:oso/9780198566946.001.0001}).
      Hence the necessity can be shown by \cref{item:contraction_in_lem}.
      If not, then the rank of \(\deletion{M}{\Bqty{e}}\) is the same as that of \(M\).
      Since the collection of bases of \(\deletion{M}{\Bqty{e}}\) is the family of all maximal elements of \(\set{B \setminus \Bqty{e}}{B \in \fun{\bases}{M}}\), we have \(\fun{\bases}{\deletion{M}{\Bqty{e}}} = \set{B \in \fun{\bases}{M}}{e \notin B}\).
      Thus, for all \(B \subseteq \fun{\gset}{M} \setminus \Bqty{e}\) with \(B \galeorder{\leftjustified} \galebasis\pqty{\deletion{M}{\Bqty{e}}, \leftjustified}\), we have \(B \galeorder{\leftjustified} \galebasis\pqty{\deletion{M}{\Bqty{e}}, \leftjustified} \galeorder{\leftjustified} \galebasis\pqty{M, \preceq}\), implying that \(B\) is a basis of \(M\).\qedhere
  \end{enumerate}
\end{proof}

This lemma allows us to test the equivalence of minors solely with respect to their Gale bases.
We can show the same upper bound for nested matroids as for partition matroids:

\begin{theorem}\label{thm:width_nested}
  Let \(M\) be a nested matroid on the ground set \(\gset\).
  The number of minors of \(M\) on \(\eivar\) is at most \(2^{\confun_{M}\pqty{\ei}}\).
\end{theorem}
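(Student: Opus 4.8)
The plan is to reduce the count of minors to a count of Gale bases and then to confine all of these Gale bases to a single Boolean interval of controlled width. The number of minors of $M$ on $\eivar$ does not depend on which order is used to analyse it, so I may fix a left-justified order $\leftjustified$ of $M$ and work with it throughout, even though $\ei$ was defined via $\preceq$. By \cref{lem:order_in_nested}, every minor $\minor{M}{X}{Y}$ on $\eivar$ (with $X \sqcup Y = \ei$) is completely determined by its Gale basis $\galebasis\pqty{\minor{M}{X}{Y}, \leftjustified} \subseteq \eivar$: the lemma states that a subset of $\eivar$ of size $\rankm\pqty{\minor{M}{X}{Y}}$ is a basis of the minor exactly when it is $\galeorder{\leftjustified}$-below this Gale basis, so the Gale basis recovers the entire collection of bases, hence the minor. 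It therefore suffices to bound the number of \emph{distinct} sets $\galebasis\pqty{N_Y, \leftjustified}$, where $N_Y \coloneqq \minor{M}{\pqty{\ei \setminus Y}}{Y}$ ranges over $Y \subseteq \ei$ (with the deleted part forced to be $\ei \setminus Y$). Note $N_{\emptyset} = \restrict{M}{\eivar}$ is the full deletion of $\ei$ and $N_{\ei} = \contraction{M}{\ei}$ is the full contraction.

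Next I would prove the \emph{sandwich} that, for every $Y \subseteq \ei$,
\[
  \galebasis\pqty{N_{\ei}, \leftjustified} \subseteq \galebasis\pqty{N_{Y}, \leftjustified} \subseteq \galebasis\pqty{N_{\emptyset}, \leftjustified}.
\]
This would follow from a single-element monotonicity stating that, on a common ground set, turning a deletion into a contraction can only remove elements from the Gale basis, i.e.\ $\galebasis\pqty{\contraction{N}{\Bqty{e}}, \leftjustified} \subseteq \galebasis\pqty{\deletion{N}{\Bqty{e}}, \leftjustified}$ for every nested matroid $N$ and $e \in \fun{\gset}{N}$ (the loop and coloop cases being trivial since then $\contraction{N}{\Bqty{e}} = \deletion{N}{\Bqty{e}}$). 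Granting this, I would move the elements of $\ei$ from the deleted side to the contracted side one at a time: passing from $N_{Y}$ to $N_{Y \cup \Bqty{e}}$ amounts, inside $\contraction{M}{Y}$ and before restricting to $\eivar$, to replacing the deletion of $e$ by the contraction of $e$, so each step can only shrink the Gale basis. Chaining these steps first along $\emptyset \subseteq Y$ and then along $Y \subseteq \ei$ yields the displayed inclusions for the given $Y$.

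To finish I would count. Since $N_{\ei} = \contraction{M}{\ei}$ has rank $\rankm\pqty{M} - \rankm_{M}\pqty{\ei}$ and $N_{\emptyset} = \restrict{M}{\eivar}$ has rank $\rankm_{M}\pqty{\eivar}$, the two extreme Gale bases are nested sets whose cardinalities differ by exactly $\rankm_{M}\pqty{\eivar} - \pqty{\rankm\pqty{M} - \rankm_{M}\pqty{\ei}} = \confun_{M}\pqty{\ei}$. By the sandwich, every $\galebasis\pqty{N_Y, \leftjustified}$ lies in the Boolean interval between these two fixed sets, and such an interval contains exactly $2^{\confun_{M}\pqty{\ei}}$ subsets; this bounds the number of distinct Gale bases, hence the number of minors on $\eivar$, which is the claim.

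The main obstacle is the single-element monotonicity $\galebasis\pqty{\contraction{N}{\Bqty{e}}, \leftjustified} \subseteq \galebasis\pqty{\deletion{N}{\Bqty{e}}, \leftjustified}$. A naive coupling of the two greedy computations of the Gale bases—process $\fun{\gset}{N} \setminus \Bqty{e}$ in decreasing $\leftjustified$-order, adding an element whenever it raises the rank—does not obviously go through, because the contraction enlarges the \emph{context} by $e$ while the deletion enlarges the \emph{selected} set, so the two relevant rank conditions are not directly comparable and the selected sets can diverge at intermediate steps. I expect to close this gap by exploiting the nested/Gale structure of \cref{lem:order_in_nested}, repairing any divergence through the exchange property exactly as in the necessity arguments used in \cref{thm:isets_and_bases} and \cref{lem:order_in_nested}, rather than through a purely greedy coupling; establishing this subset relation (not merely a comparison in the Gale order $\galeorder{\leftjustified}$) is where essentially all the work lies.
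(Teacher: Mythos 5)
Your overall strategy is exactly the paper's: use \cref{lem:order_in_nested} to identify each minor on \(\eivar\) with its Gale basis, sandwich every \(\galebasis\pqty{\minori{X}, \leftjustified}\) between \(\galebasis\pqty{\contraction{M}{\ei}, \leftjustified}\) and \(\galebasis\pqty{\deletion{M}{\ei}, \leftjustified}\) by iterating a single-element monotonicity statement, and count the resulting Boolean interval, whose width is \(\rankm_{M}\pqty{\eivar} - \pqty{\rankm\pqty{M} - \rankm_{M}\pqty{\ei}} = \confun_{M}\pqty{\ei}\). The reduction, the chaining argument, and the final count are all correct and match the paper.

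The problem is the one you flag yourself: you never prove the inclusion \(\galebasis\pqty{\contraction{N}{\Bqty{e}}, \leftjustified} \subseteq \galebasis\pqty{\deletion{N}{\Bqty{e}}, \leftjustified}\), and this is where essentially the entire content of the theorem lives, so the proposal has a genuine gap. The paper closes it by \emph{explicitly computing} both Gale bases. Writing \(\galebasis\pqty{M, \leftjustified} = \Bqty{g_1, \ldots, g_{\rankm\pqty{M}}}\) and \(\ell \coloneqq \min\set{j}{e \leftjustified g_j}\) for \(e\) neither a loop nor a coloop, it shows that \(\galebasis\pqty{\contraction{M}{\Bqty{e}}, \leftjustified} = \galebasis\pqty{M, \leftjustified} \setminus \Bqty{g_\ell}\), and that \(\galebasis\pqty{\deletion{M}{\Bqty{e}}, \leftjustified}\) is either \(\galebasis\pqty{M, \leftjustified}\) itself (when \(e \leftjustified* g_\ell\)) or is obtained from \(\galebasis\pqty{M, \leftjustified} \setminus \Bqty{g_\ell}\) by adjoining one new element, found by locating the maximal run of consecutive ground-set elements ending at \(g_\ell\) inside the Gale basis and shifting that whole run down by one position; maximality of the shifted set in \(\galeorder{\leftjustified}\) then requires its own argument (the index \(k\) in the paper's proof). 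In either case the inclusion is read off. This is not a one-step basis exchange, and the exchange-property ``repair'' arguments you point to from \cref{thm:isets_and_bases} and \cref{lem:order_in_nested} prove lexicographic or Gale-order extremality statements, not a set-theoretic containment between two Gale bases of different cardinalities; so your sketch of how to close the gap does not constitute a proof. Until this lemma is established, the sandwich, and with it the bound \(2^{\confun_{M}\pqty{\ei}}\), is unsupported.
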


\begin{proof}
  Let \(\leftjustified\) be a left-justified order of \(M\).
  By \Cref{lem:order_in_nested}, the number of minors of \(M\) on \(\eivar\) coincides with the size of \(\set{\galebasis\pqty{\minori{X}, \leftjustified}}{X \subseteq \ei}\).

  Now we show that \(\galebasis\pqty{\contraction{M}{\Bqty{e}}, \leftjustified} \subseteq \galebasis\pqty{\deletion{M}{\Bqty{e}}, \leftjustified}\) for all \(e \in \gset\).
  If \(e\) is either a loop or coloop of \(M\), then we have \(\deletion{M}{\Bqty{e}} = \contraction{M}{\Bqty{e}}\).
  Hence assume that \(e\) is neither a loop nor a coloop.
  Let \(\galebasis\pqty{M, \leftjustified} \coloneqq \Bqty{g_1, \ldots, g_{\rankm\pqty{M}}}\) such that \(g_1 \leftjustified* \cdots \leftjustified* g_{\rankm\pqty{M}}\).
  Since \(e\) is not a loop, we can define \(\ell \in \Bqty{1, \ldots, \rankm\pqty{M}}\) as \(\min\set{j}{e \leftjustified g_j}\).
  We find the Gale bases of \cref{item:contraction_in_thm}~\(\contraction{M}{\Bqty{e}}\) and \cref{item:deletion_in_thm}~\(\deletion{M}{\Bqty{e}}\) with \(\ell\).
  Note that \(\rankm\pqty{M} = \rankm\pqty{\deletion{M}{\Bqty{e}}} = \rankm\pqty{\contraction{M}{\Bqty{e}}} + 1\).
  Here, for \(X \subseteq \gset\), we denote by \(\sequence\pqty{X}\) the sequence of elements in \(X\) arranged in ascending order with respect to \(\leftjustified\) and by \({\sequence\pqty{X}}_j\) the \(j\)th element of \(\sequence\pqty{X}\).
  \begin{enumerate}[label=(\roman*)]
    \item\label{item:contraction_in_thm}
      Since \(e \leftjustified g_\ell\), we have \(\pqty{\galebasis\pqty{M, \leftjustified} \setminus \Bqty{g_\ell}} \cup \Bqty{e} \galeorder{\leftjustified} \galebasis\pqty{M, \leftjustified}\).
      Hence \(\galebasis\pqty{M, \leftjustified} \setminus \Bqty{g_\ell}\) is a basis of \(\contraction{M}{\Bqty{e}}\).
      Assume that \(B\) is a basis of \(\contraction{M}{\Bqty{e}}\).
      We can show that \(B \cup \Bqty{e} \galeorder{\leftjustified} \galebasis\pqty{M, \leftjustified}\), and there exists exactly one \(j \in \Bqty{1, \ldots, \rankm\pqty{M}}\) such that \({\sequence\pqty{B \cup \Bqty{e}}}_j = e\).
      If \(j < \ell\), then \(e \leftjustified g_j\) contradicts the choice of \(\ell\).
      If \(j = \ell\), then considering the sequences \(\sequence\pqty{B \cup \Bqty{e}}\) and \(\sequence\pqty{\galebasis\pqty{M, \leftjustified}}\) without the \(j\)th element, we have \(B \galeorder{\leftjustified} \galebasis\pqty{M, \leftjustified} \setminus \Bqty{g_\ell}\).
      If \(j > \ell\), then \({\sequence\pqty{B}}_\ell \leftjustified* \cdots \leftjustified* {\sequence\pqty{B}}_{j - 1} \leftjustified* e \leftjustified g_\ell \leftjustified* \cdots \leftjustified* g_j\) implies that \(\pqty{{\sequence\pqty{B}}_\ell, \ldots, {\sequence\pqty{B}}_{j - 1}} \galeorder{\leftjustified} \pqty{g_{\ell + 1}, \ldots, g_j}\).
      Thus \(\galebasis\pqty{M, \leftjustified} \setminus \Bqty{g_\ell}\) is the Gale basis of \(\contraction{M}{\Bqty{e}}\).
    \item\label{item:deletion_in_thm}
      Since \(\fun{\bases}{\deletion{M}{\Bqty{e}}} = \set{B \in \fun{\bases}{M}}{e \notin B}\), if \(e \leftjustified* g_\ell\), then \(\galebasis\pqty{M, \leftjustified}\) is the Gale basis of \(\deletion{M}{\Bqty{e}}\).
      Assume that \(e = g_\ell\) below.\newline
      Let \(\gset\) be a set \(\Bqty{e_1, \ldots, e_n}\) such that \(e_1 \leftjustified* \cdots \leftjustified* e_n\) and \(m \in \Bqty{\ell, \ldots, n}\) be a positive integer such that \(e_m = g_\ell\).
      If \(m = \ell\), then \(\galebasis\pqty{M, \leftjustified} = \Bqty{e_1, \ldots, e_m, g_{\ell + 1}, \ldots, g_{\rankm\pqty{M}}}\) implies that every basis of \(M\) contains \(e_m\), which contradicts that \(e = e_m\) is not a coloop.
      Hence we have \(m > \ell\).
      Now let \(k \coloneqq \min\set{j \in \Bqty{1, \ldots, \ell}}{g_j = e_{m - \pqty{\ell - j}}}\).
      Then, for \(j \in \Bqty{1, \ldots, \rankm\pqty{M}}\), we can define \(h_j\) by
      \[
        h_j \coloneqq \begin{cases}
          e_{m - \pqty{\ell - j} - 1} & \text{if \(j \in \Bqty{k, \ldots, \ell}\),} \\
          g_j & \text{otherwise.} \\
        \end{cases}
      \]
      By the definition of \(k\), we have \(h_1 \leftjustified* \cdots \leftjustified* h_{\rankm\pqty{M}}\).
      Let \(H \coloneqq \Bqty{h_1, \ldots, h_{\rankm\pqty{M}}}\), and then we show that \(H\) is the Gale basis of \(\deletion{M}{\Bqty{e}}\).
      Note that \(e = e_m \notin H\).\newline
      Since \(h_j \leftjustified g_j\) for all \(j \in \Bqty{1, \ldots, \rankm\pqty{M}}\), the set \(H\) is a basis of \(\deletion{M}{\Bqty{e}}\).
      Now suppose that there exists a basis \(B \in \fun{\bases}{\deletion{M}{\Bqty{e}}}\) such that \(H \galeorder{\leftjustified*} B\).
      Since \(B\) is also a basis of \(M\), every \(j \in \Bqty{1, \ldots, \rankm\pqty{M}}\) satisfies that \({\sequence\pqty{B}}_j \leftjustified g_j\).
      Hence there exists \(x \in \Bqty{k, \ldots, \ell - 1}\) such that \(h_x = e_{m - \pqty{\ell - x} - 1} \leftjustified* {\sequence\pqty{B}}_x = g_x = e_{m - \pqty{\ell - x}}\).
      By the definition of \(H\), we can show that \(h_j \leftjustified* {\sequence\pqty{B}}_j = g_j\) for all \(j \in \Bqty{x, \ldots, \ell - 1}\).
      In particular, for \(j = \ell - 1\), we deduce that \({\sequence\pqty{B}}_{\ell - 1} = e_{m - 1}\).
      However, since \(e_m \notin B\), there is no element \({\sequence\pqty{B}}_\ell\) such that \({\sequence\pqty{B}}_{\ell - 1} = e_{m - 1} \leftjustified* {\sequence\pqty{B}}_\ell \leftjustified g_\ell = e_m\); a contradiction.
      Thus \(H\) is the Gale basis of \(\deletion{M}{\Bqty{e}}\).
      Note that \(H\) can be expressed as \(\pqty{\galebasis\pqty{M, \leftjustified} \setminus \Bqty{g_\ell}} \cup \Bqty{e_{m - \pqty{\ell - k} - 1}}\).
  \end{enumerate}

  In summary, \(\galebasis\pqty{M, \leftjustified} \setminus \Bqty{g_\ell}\) is the Gale basis of \(\contraction{M}{\Bqty{e}}\), and \(\galebasis\pqty{M, \leftjustified}\) or \(\pqty{\galebasis\pqty{M, \leftjustified} \setminus \Bqty{g_\ell}} \cup \Bqty{e_{m - \pqty{\ell - k} - 1}}\) is the Gale basis of \(\deletion{M}{\Bqty{e}}\).
  This implies that \(\galebasis\pqty{\contraction{M}{\Bqty{e}}, \leftjustified} \subseteq \galebasis\pqty{\deletion{M}{\Bqty{e}}, \leftjustified}\) for all \(e \in \gset\).
  Then it follows that \(\galebasis\pqty{\contraction{M}{\ei}, \leftjustified} \subseteq \galebasis\pqty{\minori{X}, \leftjustified} \subseteq \galebasis\pqty{\deletion{M}{\ei}, \leftjustified}\) for all \(X \subseteq \ei\).
  Therefore, the number of minors on \(\eivar\) is at most
  \[
    \begin{split}
      2^{\abs{\galebasis\pqty{\deletion{M}{\ei}, \leftjustified} \setminus \galebasis\pqty{\contraction{M}{\ei}, \leftjustified}}}
      &= 2^{\abs{\galebasis\pqty{\deletion{M}{\ei}, \leftjustified}} - \abs{\galebasis\pqty{\contraction{M}{\ei}, \leftjustified}}} \\
      &= 2^{\rankm_{M}\pqty{\eivar} - \pqty{\rankm\pqty{M} - \rankm_{M}\pqty{\ei}}} = 2^{\confun_{M}\pqty{\ei}}\text{.}\qedhere
    \end{split}
  \]
\end{proof}

So far, we have discussed classes of matroids where the width can be bounded by a value depending only on \(\confun\pqty{\ei}\).
However, some classes can not be bounded.
Before showing such classes, we can show the following:

\begin{lemma}\label{lem:exact_width_zdd}
  The \(i\)th width of the ZDD \(\zdd{\fun{\isets}{M}}[\gset][\preceq]\) is equal to the number of minors of \(M\) on \(\eivar\) for which \(e_i\) is not a loop.
\end{lemma}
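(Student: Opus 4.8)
The plan is to set up a bijection between the non-terminal nodes of \(\zdd{\fun{\isets}{M}}[\gset][\preceq]\) carrying the label \(e_{i+1}\) (the least element of \(\eivar\)) and the minors of \(M\) on \(\eivar\) in which \(e_{i+1}\) is not a loop. I would rely on the observation made at the beginning of \cref{sec:width}: each node labeled \(e_{i+1}\) is the root of a sub-ZDD representing \(\fun{\isets}{N}\) for a unique minor \(N\) of \(M\) on \(\eivar\), and distinct such nodes represent distinct families, hence distinct minors (a matroid being determined by its independent sets). It therefore remains to determine \emph{which} minors occur and with \emph{which} label, which splits into an upper and a lower bound.

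For the upper bound I would invoke the reduction rule \cref{item:node_deletion_zdd}. A node representing \(\fun{\isets}{N}\) survives with label \(e_{i+1}\) precisely when its 1-arc does not point to \(\bot\), i.e.\ when its 1-child, which represents \(\set{I \setminus \Bqty{e_{i+1}}}{e_{i+1} \in I \in \fun{\isets}{N}}\), is non-empty. This family is non-empty if and only if \(\Bqty{e_{i+1}} \in \fun{\isets}{N}\), that is, \(e_{i+1}\) is not a loop of \(N\). Combined with the injectivity above, this already shows that the \(i\)th width is at most the number of minors on \(\eivar\) whose least element \(e_{i+1}\) is not a loop.

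For the matching lower bound I would show that every minor of \(M\) on \(\eivar\) actually occurs as a sub-ZDD, proving by induction on \(k \in \Bqty{0, \ldots, i}\) that every minor of \(M\) on \(\gset \setminus \Bqty{e_1, \ldots, e_k}\) is represented by some sub-ZDD of \(\zdd{\fun{\isets}{M}}[\gset][\preceq]\). The base case \(k = 0\) is the whole diagram, representing \(M\). For the step, a minor \(N^\prime\) on \(\gset \setminus \Bqty{e_1, \ldots, e_{k+1}}\) equals \(\deletion{N}{\Bqty{e_{k+1}}}\) or \(\contraction{N}{\Bqty{e_{k+1}}}\) for a minor \(N\) on \(\gset \setminus \Bqty{e_1, \ldots, e_k}\), which by the inductive hypothesis sits at a node \(\mu\). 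If \(e_{k+1}\) is not a loop of \(N\), then \(\mu\) is labeled \(e_{k+1}\) and its 0- and 1-children represent exactly \(\fun{\isets}{\deletion{N}{\Bqty{e_{k+1}}}}\) and \(\fun{\isets}{\contraction{N}{\Bqty{e_{k+1}}}}\), so \(N^\prime\) occurs. Taking \(k = i\) and combining with the loop characterization and injectivity yields the claimed equality.

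The main obstacle is the loop-skipping in the induction step: if \(e_{k+1}\) is a loop of \(N\), then \(\mu\) is \emph{not} labeled \(e_{k+1}\), so there is no \(e_{k+1}\)-arc to descend along. The point to verify carefully is that in this case contracting the loop coincides with deleting it, so \(\deletion{N}{\Bqty{e_{k+1}}} = \contraction{N}{\Bqty{e_{k+1}}} = N^\prime\), and moreover \(\fun{\isets}{N^\prime} = \fun{\isets}{N}\) as families (no independent set of \(N\) uses the loop \(e_{k+1}\)); hence \(N^\prime\) is represented by the very same node \(\mu\), keeping the induction consistent despite the missing level. Once this is settled, the three ingredients---injectivity, the loop characterization of the label, and the surjectivity just proved---assemble into the stated equality.
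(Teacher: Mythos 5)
Your proof is correct and follows essentially the same route as the paper's: identify the nodes at level \(i\) with the minors of \(M\) on \(\eivar\) whose collections of independent sets their sub-ZDDs represent, and observe via the zero-suppression rule \cref{item:node_deletion_zdd} that such a node carries the label \(e_{i+1}\) exactly when that element is not a loop of the corresponding minor. You merely supply details the paper leaves implicit (the induction showing every minor occurs as a sub-ZDD, with the loop-skipping case handled by \(\deletion{N}{\Bqty{e_{k+1}}} = \contraction{N}{\Bqty{e_{k+1}}}\)), and you correctly read the statement's ``\(e_i\)'' as \(e_{i+1}\), which is the reading consistent with the paper's indexing of widths.
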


\begin{proof}
  As mentioned above, the sub-ZDDs of \(\zdd{\fun{\isets}{M}}[\gset][\preceq]\) rooted by a non-terminal node with label \(e_i\) represent the collections of independent sets of some minors of \(M\) on \(\eivar\).
  Conversely, there is a sub-ZDD representing the collection of independent sets for every minor on \(\eivar\).
  If \(e_i\) is not a loop, that is, \(\Bqty{e_i}\) is independent in a certain minor, then the corresponding sub-ZDD contains non-terminal nodes with label \(e_i\), implying that the root label of this sub-ZDD is \(e_i\).
  On the other hand, if \(e_i\) is a loop, then the label \(e_i\) does not appear in non-terminal nodes of the sub-ZDD.
\end{proof}

We also show that the \(i\)th width of the BDD/ZDD for a transversal (see \cref{eg:nested}) or laminar (see \cref{eg:laminar}) matroid cannot be bounded by a value depending only on \(\confun\pqty{\ei}\).

\begin{theorem}\label{thm:width_transversal}
  There does not exist a function \(f \colon \mathbb{N} \to \mathbb{N}\) such that, for any transversal matroid \(M\) on the ground set \(\gset\), total order \(\preceq\) on \(\gset\), and \(i \in \Bqty{0, \ldots, \abs{\gset} - 1}\), the \(i\)th width of the ZDD \(\zdd{\fun{\isets}{M}}[\gset][\preceq]\) is at most \(\fun{f}{\confun_{M}\pqty{\ei}}\).
\end{theorem}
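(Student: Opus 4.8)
The plan is to convert the statement into a counting problem about minors and then to defeat every candidate \(f\) with a single family. By \cref{lem:exact_width_zdd} the \(i\)th width of \(\zdd{\fun{\isets}{M}}\) equals the number of minors of \(M\) on \(\eivar\) in which the splitting element is not a loop, and by the discussion opening \cref{sec:width} this width is sandwiched between that count and the total number of minors on \(\eivar\). Thus it suffices to lower bound the number of distinct minors on \(\eivar\). The first step I would make explicit is the logical reformulation of the theorem: a bounding function \(f\) exists precisely when, for every fixed constant \(c\), the supremum of the \(i\)th width over all transversal \(M\), orders \(\preceq\), and indices \(i\) with \(\confun_{M}\pqty{\ei} = c\) is finite. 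Consequently, to prove the theorem it is enough to fix one constant \(c\) and to construct a sequence of transversal matroids \(M_1, M_2, \ldots\), together with subsets \(S_k \subseteq \fun{\gset}{M_k}\) (which we place as prefixes \(\ei\) by ordering their elements first), such that \(\confun_{M_k}\pqty{S_k} \le c\) for all \(k\) while the number of distinct minors on \(\fun{\gset}{M_k} \setminus S_k\) tends to infinity.

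Second, I would reduce the minor count to a count over contractions. Since deletions and contractions on \(S_k\) commute and deletions outside \(\fun{\gset}{M_k} \setminus S_k\) leave the induced structure untouched, every minor on \(\fun{\gset}{M_k} \setminus S_k\) has the form \(\contraction{M_k}{Y}\) restricted to \(\fun{\gset}{M_k} \setminus S_k\) for some contracted part \(Y \subseteq S_k\); hence the number of minors equals the number of distinct such restricted matroids as \(Y\) ranges over subsets of \(S_k\). I would then design the bipartite presentation of \(M_k\) so that many different choices of \(Y\) yield genuinely different independence families on the suffix — concretely, so that the traced loop set \(\fun{\mathrm{cl}}{Y} \cap \pqty{\fun{\gset}{M_k} \setminus S_k}\) (equivalently, which suffix elements become dependent once a maximum matching of \(Y\) is fixed) takes unboundedly many values. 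Distinctness of two such minors would be certified directly, by exhibiting a test subset of the suffix whose rank, computed from the presentation by a matching argument, differs between the two.

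The two properties then have to be verified independently. To bound the connectivity I would compute \(\confun_{M_k}\pqty{S_k} = \rankm_{M_k}\pqty{S_k} + \rankm_{M_k}\pqty{\fun{\gset}{M_k} \setminus S_k} - \rankm\pqty{M_k}\) directly from the presentation using the K\"onig--Hall (deficiency) formula for the rank of a transversal matroid, arranging the crossing sets so that \(S_k\) and its complement are nearly skew and the overlap term stays at most \(c\). To force unboundedly many minors I would give a matching argument showing that, for each of the designated contraction patterns \(Y\), the prescribed suffix elements can be matched precisely when \(Y\) is left alone and are blocked once \(Y\) is contracted, so that no two patterns collapse to the same restricted matroid.

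The main obstacle is reconciling these two requirements. For representable matroids a bounded value of \(\confun\) forces only boundedly many minors — this is exactly the folklore Gaussian-binomial bound \(\sum_{d} \qbinom{\confun_{M}\pqty{\ei}}{d}_q\) recalled in \cref{sec:related_work} — so the construction cannot rest on any rank- or span-based intuition: a thin cut in a representable matroid genuinely carries little information. The entire content of the theorem is that transversal matroids escape this phenomenon because connectivity is a rank quantity that does not see the full matching structure of the presentation. The delicate point will therefore be to smuggle unboundedly many distinct boundary behaviours across a cut whose connectivity is a fixed constant, and the technical heart of the proof will be the matching/Hall verification that these behaviours remain pairwise distinct — precisely the step where non-representability must be used essentially, since the naive parallel-class construction that would achieve distinctness forces the connectivity to grow linearly and collapses the example to a (pigeonhole) partition matroid.
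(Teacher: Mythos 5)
Your reduction of the theorem to a counting problem about minors is sound and matches the paper's framing: by \cref{lem:exact_width_zdd} and the discussion opening \cref{sec:width}, it suffices to produce transversal matroids with bounded connectivity across a prefix but unboundedly many minors on the suffix (with the minor caveat below). But the proposal stops exactly where the theorem's content begins. You write that you ``would design the bipartite presentation of \(M_k\) so that many different choices of \(Y\) yield genuinely different independence families'' and that ``the technical heart of the proof will be the matching/Hall verification'' --- and then no presentation is given, no family \(M_k\) is exhibited, and no such verification is carried out. Your closing paragraph is an accurate description of why the construction is delicate, not a construction. As it stands, the argument establishes only that \emph{if} transversal matroids admit low-connectivity cuts with unboundedly many distinct boundary behaviours, \emph{then} the widths are unbounded; the antecedent is the whole theorem. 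The paper does not construct such a family from scratch either: it invokes the known fact that the class of transversal matroids is not strongly pigeonhole \cite[Proposition 20]{Funk2023}, which hands over precisely a transversal matroid \(M^\prime\) and a set \(S\) with \(\confun_{M^\prime}\pqty{S} \leq \mu\) whose quotient by \(\sim_{M^\prime, S}\) has more than \(\fun{f}{\mu} + 1\) classes; the remaining work is a padding argument. Your proof needs either that citation or an explicit family with a verified matching argument; without one of the two there is no proof.

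Two smaller points. First, the paper's padding with \(U_{d, 2d}\) forces the connectivity of the cut to equal \(\mu\) exactly, so the contradiction is against \(f(\mu)\) itself; your alternative --- allowing \(\confun_{M_k}\pqty{S_k} \leq c\) and pigeonholing over the finitely many values below \(c\) --- is logically fine, but you should say explicitly that you are passing to an infinite subsequence with constant connectivity value. Second, your ``sandwich'' gives a \emph{lower} bound on the width only in terms of the number of minors in which the splitting element is \emph{not} a loop; a large total minor count does not by itself force a large width. The paper handles this by inserting the coloop \(U_{1, 1}\) summand at the splitting position so that the relevant element is independent in every minor; your construction would need an analogous device, which again cannot be supplied until the construction itself exists.
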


\begin{proof}
  Since the class of transversal matroids is not strongly pigeonhole~\cite[Proposition 20]{Funk2023}, there exists a positive integer \(\mu\) such that, for all positive integers \(\fun{g}{\mu}\) depending only on \(\mu\), there are a transversal matroid \(M^\prime\) and \(S \subseteq \fun{\gset}{M^\prime}\) with \(\confun_{M^\prime}\pqty{S} \leq \mu\) satisfying that the size of the quotient set by \(\sim_{M^\prime, S}\) is greater than \(\fun{g}{\mu}\).
  Suppose that \(\fun{f}{\mu}\) is an upper bound on the width.
  There exist a transversal matroid \(M^\prime\) on the ground set \(\gset^{\pqty{1}}\) and \(S \subseteq \gset^{\pqty{1}}\) with \(\confun_{M^\prime}\pqty{S} \leq \mu\) such that the size of the quotient set by \(\sim_{M^\prime, S}\) is at least \(\fun{f}{\mu} + 2\).
  Let \(d \coloneqq \mu - \confun_{M^\prime}\pqty{S}\).
  Then, since the direct sum \(M^\prime \oplus U_{1, 1} \oplus U_{d, 2d}\) of the matroid \(M^\prime\), a free matroid \(U_{1, 1}\) on the ground set \(\gset^{\pqty{2}}\), and a uniform matroid \(U_{d, 2d}\) on the ground set \(\gset^{\pqty{3}} \cup \gset^{\pqty{4}}\) is transversal (see \cite[Proposition 4.2.11]{10.1093/acprof:oso/9780198566946.001.0001}), it suffices to show that the \(\pqty{m + d}\)-th width of the ZDD \(\zdd{\fun{\isets}{M^\prime \oplus U_{1, 1} \oplus U_{d, 2d}}}[\gset^{\pqty{1}} \cup \gset^{\pqty{2}} \cup \gset^{\pqty{3}} \cup \gset^{\pqty{4}}][\preceq]\) is greater than \(\fun{f}{\mu}\).
  Here let \(S \coloneqq \Bqty{s_1, \ldots, s_m}\), \(\gset^{\pqty{1}} \setminus S \coloneqq \Bqty{\overline{s}_1, \ldots, \overline{s}_{n - m}}\), \(\gset^{\pqty{2}} \coloneqq \Bqty{e_{2, 1}}\), \(\gset^{\pqty{3}} \coloneqq \Bqty{e_{3, 1}, \ldots, e_{3, d}}\), \(\gset^{\pqty{4}} \coloneqq \Bqty{e_{4, 1}, \ldots, e_{4, d}}\), and \(\preceq\) be a total order on \(\gset^{\pqty{1}} \cup \gset^{\pqty{2}} \cup \gset^{\pqty{3}} \cup \gset^{\pqty{4}}\) such that \(s_1 \preceq \cdots \preceq s_m \preceq e_{3, 1} \preceq \cdots \preceq e_{3, d} \preceq e_{2, 1} \preceq \overline{s}_1 \preceq \cdots \preceq \overline{s}_{n - m} \preceq e_{4, 1} \preceq \cdots \preceq e_{4, d}\).

  By \cref{thm:width_uniform,lem:direct_sum_width}, the number of minors of \(M^\prime \oplus U_{1, 1} \oplus U_{d, 2d}\) on \(\pqty{\gset^{\pqty{1}} \setminus S} \cup \gset^{\pqty{2}} \cup \gset^{\pqty{4}}\) is the product of the number of minors of \(M^\prime\) on \(\gset^{\pqty{1}} \setminus S\), of \(U_{1, 1}\) on \(\gset^{\pqty{2}}\), and of \(U_{d, 2d}\) on \(\gset^{\pqty{4}}\).
  Moreover, \(\gset^{\pqty{2}}\) is independent in all minors of \(M^\prime \oplus U_{1, 1} \oplus U_{d, 2d}\) on \(\pqty{\gset^{\pqty{1}} \setminus S} \cup \gset^{\pqty{2}} \cup \gset^{\pqty{4}}\).
  Thus, by \cref{lem:exact_width_zdd}, the \(\pqty{m + d}\)-th width of \(\zdd{\fun{\isets}{M^\prime \oplus U_{1, 1} \oplus U_{d, 2d}}}[\gset^{\pqty{1}} \cup \gset^{\pqty{2}} \cup \gset^{\pqty{3}} \cup \gset^{\pqty{4}}][\preceq]\) is at least \(\pqty{\fun{f}{\mu} + 1} \cdot 1 \cdot \pqty{\confun_{U_{d, 2d}}\pqty{\gset^{\pqty{3}}} + 1} = \pqty{\fun{f}{\mu} + 1}\pqty{d + 1}\).
  However, we have
  \[
    \begin{split}
      \confun_{M^\prime \oplus U_{1, 1} \oplus U_{d, 2d}}\pqty{S \cup \gset^{\pqty{3}}}
      &= \pqty{\rankm_{M^\prime}\pqty{S} + d} + \pqty{\rankm_{M^\prime}\pqty{\gset^{\pqty{1}} \setminus S} + 1 + d} - \pqty{\rankm\pqty{M^\prime} + 1 + d} \\
      &= \confun_{M^\prime}\pqty{S} + d = \mu\text{;}
    \end{split}
  \]
  a contradiction.
\end{proof}

\begin{theorem}\label{thm:width_laminar}
  There does not exist a function \(f \colon \mathbb{N} \to \mathbb{N}\) such that, for any laminar matroid \(M\) on the ground set \(\gset\), total order \(\preceq\) on \(\gset\), and \(i \in \Bqty{0, \ldots, \abs{\gset} - 1}\), the \(i\)th width of the ZDD \(\zdd{\fun{\isets}{M}}[\gset][\preceq]\) is at most \(\fun{f}{\confun_{M}\pqty{\ei}}\).
\end{theorem}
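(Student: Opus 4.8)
The plan is to refute the existence of $f$ directly, by constructing for every integer $m \ge 2$ a laminar matroid $M_m$, a total order $\preceq$, and a level at which $\confun_{M_m}$ equals the fixed value $2$ while the corresponding width of $\zdd{\fun{\isets}{M_m}}$ is exactly $m$. The mechanism is the one feature separating laminar matroids from the partition and nested matroids of \cref{thm:width_partition,thm:width_nested}: a single global capacity constraint (a truncation) can hold the connectivity function bounded, yet still spawn unboundedly many distinct low-rank minors on the right part that are told apart only by \emph{which} element becomes a loop.

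Concretely, I would take $\gset \coloneqq \Bqty{s_1, r_1, \ldots, s_m, r_m}$ and let $M_m$ be the laminar matroid given by the laminar family $\set{\Bqty{s_t, r_t}}{t \in \Bqty{1, \ldots, m}} \cup \Bqty{\gset}$ with capacities $1$ on each pair $\Bqty{s_t, r_t}$ and $2$ on $\gset$; thus $I \subseteq \gset$ is independent precisely when it meets every pair in at most one element and $\abs{I} \le 2$. Ordering the ground set by $s_1 \prec \cdots \prec s_m \prec r_1 \prec \cdots \prec r_m$ and taking $i \coloneqq m$, the left part is $S \coloneqq \Bqty{s_1, \ldots, s_m}$ and the right part is $R \coloneqq \Bqty{r_1, \ldots, r_m}$. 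Since any two of the $s_t$ are independent, $\rankm_{M_m}\pqty{S} = \rankm_{M_m}\pqty{R} = \rankm\pqty{M_m} = 2$, so $\confun_{M_m}\pqty{S} = 2 + 2 - 2 = 2$ for all $m$.

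It then remains to enumerate the minors of $M_m$ on $R$ and apply \cref{lem:exact_width_zdd}. For $Y \subseteq S$ the restriction to $R$ of $\contraction{M_m}{Y}$ depends only on $Y$, and because $\rankm\pqty{M_m} = 2$ there are exactly three kinds: for $Y = \emptyset$ it is $U_{2, m}$ (no loops); for $Y = \Bqty{s_j}$ the partner $r_j$ is the unique loop and the other $r_t$ collapse into one parallel class of rank one; and for $\abs{Y} \ge 2$ the rank is zero, so every $r_t$ is a loop. These are $m + 2$ pairwise distinct matroids on $R$. By \cref{lem:exact_width_zdd} the width at the nodes labelled $r_1$ counts exactly those minors in which $r_1$---the least element of $R$---is not a loop; removing the two minors where $r_1$ is a loop (namely $Y = \Bqty{s_1}$ and the rank-zero minor) leaves $m$. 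Hence this width equals $m$ while $\confun_{M_m}\pqty{S} = 2$, and letting $m \to \infty$ shows that no $f$ can bound the width in terms of $\confun_{M_m}\pqty{S}$.

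The conceptual crux is the construction itself---recognising that a global capacity decouples the connectivity (pinned at the truncation rank $2$) from the \emph{variety} of loops a contraction can create, which is what defeats any pigeonhole-type bound. Once the matroid is written down, the work is routine: checking that the displayed family is laminar, that $\rankm_{M_m}\pqty{S} = \rankm_{M_m}\pqty{R} = 2$, and that contracting a single $s_j$ makes its parallel partner $r_j$ a loop while fusing the remaining $r_t$ into a single parallel class. The only bookkeeping point is to align ``nodes labelled $r_1$'' with the index convention of \cref{lem:exact_width_zdd}, which is immediate since $r_1$ is the smallest element of $R$.
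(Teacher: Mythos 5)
Your proof is correct and takes essentially the same approach as the paper: the paper's counterexample is exactly your matroid (disjoint pairs of capacity~$1$ truncated to rank~$2$, with the first elements of the pairs ordered before the second elements), and it likewise invokes \cref{lem:exact_width_zdd} to count the minors obtained by contracting a single left-side element, each distinguished by which right-side element becomes a loop. The only cosmetic difference is that you enumerate all $m+2$ minors exactly, whereas the paper contents itself with the lower bound of $\fun{f}{2}+1$ coming from the singleton contractions.
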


\begin{proof}
  Suppose that there exists a constant upper bound \(\fun{f}{2}\).
  Let \(\gset\) be a set \(\Bqty{e_1, \ldots, e_{2\pqty{\fun{f}{2} + 2}}}\), \(\preceq\) be a total order on \(\gset\) such that \(e_1 \prec e_3 \prec \cdots \prec e_{2\fun{f}{2} + 3} \prec e_2 \prec e_4 \prec \cdots \prec e_{2\pqty{\fun{f}{2} + 2}}\), and \(M\) be a laminar matroid on \(\gset\) whose collection of independent sets is
  \[
    \set{I \subseteq \gset}{\text{\(\abs{I \cap \Bqty{e_{2j - 1}, e_{2j}}} \leq 1\) and \(\abs{I \cap \Bqty{e_1, \ldots, e_{2j}}} \leq 2\) for all \(j \in \Bqty{1, \ldots, \fun{f}{2} + 2}\)}}\text{.}
  \]
  Then, for \(j \in \Bqty{1, 3, \ldots, 2\fun{f}{2} + 3}\), the collection \(\fun{\isets}{\minori{\Bqty{e_j}}}\) equals \(\Bqty{\emptyset, \Bqty{e_2}, \Bqty{e_4}, \ldots,\\ \Bqty{e_{2\pqty{\fun{f}{2} + 2}}}} \setminus \Bqty{e_{j + 1}}\).
  Thus, by \cref{lem:exact_width_zdd}, the \(\pqty{\fun{f}{2} + 2}\)-th width of \(\zdd{\fun{\isets}{M}}[\gset][\preceq]\) is at least \(\fun{f}{2} + 1\).
  On the other hands, we have \(\confun_{M}\pqty{\ei} = 2 + 2 - 2 = 2\); a contradiction.
\end{proof}

\Cref{thm:width_uniform,cor:width_free,thm:width_partition,thm:width_nested} collectively establish \cref{thm:intro_width}.
These results, along with \cref{thm:width_transversal,thm:width_laminar}, are illustrated as in \cref{fig:width_repr_matroid}.
Note that the relations between the classes of matroids can be structured with reference to the following three examples.

\begin{myexample}[Transversal matroid that is not laminar]
  For \(r \in \mathbb{N} \setminus \Bqty{0, 1, 2}\), let \(\gset_r \coloneqq \Bqty{e_1, \ldots, e_{2r - 1}}\) and
  \[
    \isets_r \coloneqq \set{I \subseteq \gset}{\text{\(\abs{I} \leq r\), \(I \neq \Bqty{e_1, \ldots, e_r}\), and \(I \neq \Bqty{e_r, \ldots, e_{2r - 1}}\)}}\text{.}
  \]
  Then \(Y_r\) is a transversal matroid, where \(Y_r \coloneqq \pqty{\gset_r, \isets_r}\).
  Indeed, \(Y_r\) has the presentation \(\Bqty{\Bqty{e_1, \ldots, e_{r - 1}}, \Bqty{e_2, \ldots, e_{r + 1}}, \Bqty{e_3, \ldots, e_{r + 2}} \ldots, \Bqty{e_{r - 1}, \ldots, e_{2r - 2}}, \Bqty{e_{r + 1}, \ldots, e_{2r - 1}}}\).
  On the other hands, \(Y_r\) is not laminar~\cite[Theorem 1.2]{FIFE2017206}.
\end{myexample}

\begin{myexample}[Laminar matroid that is not transversal]
  For the undirected graph \(G\) in \Cref{fig:transversal_not_laminar}, the cycle matroid \(\fun{M}{G}\) of \(G\) is not transversal (see \cite[Example 1.6.3]{10.1093/acprof:oso/9780198566946.001.0001}).
  On the other hand, the collection of independent sets of \(\fun{M}{G}\) can be represented as
  \[
    \set*{I \subseteq \Bqty{e_1, \ldots, e_6}}{%
      \begin{array}{@{}l@{}}
        \text{\(\abs{I \cap \Bqty{e_1, e_2}} \leq 1\), \(\abs{I \cap \Bqty{e_3, e_4}} \leq 1\), \(\abs{I \cap \Bqty{e_5, e_6}} \leq 1\),} \\
        \text{and \(\abs{I \cap \Bqty{e_1, \ldots, e_6}} \leq 2\)}
      \end{array}%
    }\text{.}
  \]
  Thus \(\fun{M}{G}\) is laminar.
\end{myexample}

\begin{myexample}[Matroid that is not nested but both transversal and laminar]
  Let \(\gset \coloneqq \Bqty{e_1, \ldots, e_6}\) and
  \[
    \isets \coloneqq \set{I \subseteq \gset}{\text{\(\abs{I \cap \Bqty{e_1, e_2, e_3}} \leq 2\), \(\abs{I \cap \Bqty{e_4, e_5, e_6}} \leq 2\), and \(\abs{I \cap \gset} \leq 3\)}}\text{.}
  \]
  Then \(N^3\) is a laminar matroid, where \(N^3 \coloneqq \pqty{\gset, \isets}\).
  Moreover, \(N^3\) is transversal because it has the presentation \(\pqty{\Bqty{e_1, e_2, e_3}, \Bqty{e_4, e_5, e_6}, \gset}\).
  On the other hands, \(N^3\) is not nested~\cite[Theorem 13]{Oxley_Prendergast_Row_1982}.
\end{myexample}

\subsection{Upper bounds on widths in good orders}\label{sec:width_with_pathwidth}

In the previous section, we considered any total order \(\preceq\) on \(\gset\).
However, certain orders may result in smaller widths of BDDs/ZDDs.
For instance, as discussed in \cref{sec:related_work}, there exists a total order \(\preceq^\ast\) on \(E\) such that the width of the BDD/ZDD for the cycle matroid of an undirected graph \(G\) with respect to \(\pqty{E, \preceq^\ast}\) is at most the \(\operatorname{lw}\pqty{G}\)-th Bell number, where \(\operatorname{lw}\pqty{G}\) is the linear-width~\cite[F9]{thomas1996} of \(G\).

In this section, we use the pathwidth of matroids as a parameter of the width and prove \cref{thm:intro_pathwidth}.
Recall from \cref{sec:introduction} that the pathwidth \(\pathwidth\pqty{M}\) of a matroid \(M\) on the \(n\)-element set \(\gset\) is defined by
\[
  \pathwidth\pqty{M} \coloneqq \min\set{\max\set{\confun_{M}\pqty{\ei}}{i \in \Bqty{1, \ldots, n}}}{\text{\(\preceq\) is a total order on \(\gset\)}}\text{.}
\]
Note that we sometimes use the following definition since \(\confun_{M}\pqty{\emptyset} = \confun_{M}\pqty{\gset} = 0\):
\[
  \pathwidth\pqty{M} \coloneqq \min\set{\max\set{\confun_{M}\pqty{\ei}}{i \in \Bqty{0, \ldots, n - 1}}}{\text{\(\preceq\) is a total order on \(\gset\)}}\text{.}
\]
Recall also from \cref{sec:introduction} that we refer to the four types of BDDs/ZDDs representing either a collection of the independent sets or bases of a matroid as BDDs/ZDDs for a matroid.

We first provide the upper bound for uniform matroids.

\begin{theorem}
  Let \(M\) be a uniform matroid on the ground set \(\gset\).
  For all total orders \(\preceq\) on \(\gset\), the width of the BDDs/ZDDs for \(M\) with respect to \(\pqty{\gset, \preceq}\) is at most \(\pathwidth\pqty{M} + 1\).
\end{theorem}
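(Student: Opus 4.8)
The plan is to combine \cref{thm:width_uniform} with the general minor-counting bound on widths established at the start of \cref{sec:width}, and then to exploit the fact that for a uniform matroid the connectivity function depends only on the cardinality of its argument. First I would recall that, for each \(i\), the \(i\)th width of every one of the four BDDs/ZDDs for \(M\) is bounded above by the number of minors of \(M\) on \(\eivar\); this is the common bound derived from the sub-ZDD/minor correspondence, so it applies uniformly to all four representations and no representation-specific argument is needed. By \cref{thm:width_uniform}, that number of minors equals \(\confun_{M}\pqty{\ei} + 1\). Hence the \(i\)th width is at most \(\confun_{M}\pqty{\ei} + 1\), and taking the maximum over \(i\) gives that the width of the BDD/ZDD is at most \(\max_i \confun_{M}\pqty{\ei} + 1\).

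The crux of the argument is the second step. For a uniform matroid \(M\) of rank \(r = \rankm\pqty{M}\) we have \(\rankm_{M}\pqty{X} = \min\Bqty{\abs{X}, r}\) for every \(X \subseteq \gset\), so
\[
  \confun_{M}\pqty{X} = \min\Bqty{\abs{X}, r} + \min\Bqty{\abs{\gset} - \abs{X}, r} - r
\]
depends only on \(\abs{X}\). Since \(\abs{\ei} = i\) regardless of the chosen order \(\preceq\), the value \(\confun_{M}\pqty{\ei}\) is a function of \(i\) alone, and therefore the quantity \(\max_i \confun_{M}\pqty{\ei}\) is the same for every total order on \(\gset\). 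Consequently the minimum over orders in the definition of \(\pathwidth\pqty{M}\) is attained by every order, which yields \(\max_i \confun_{M}\pqty{\ei} = \pathwidth\pqty{M}\) for all \(\preceq\).

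Combining the two steps shows that, for every total order \(\preceq\), the width of any of the four BDDs/ZDDs for \(M\) is at most \(\pathwidth\pqty{M} + 1\), as claimed.

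As for the main obstacle: there is essentially no difficulty beyond recognizing the order-independence of \(\confun_M\) for uniform matroids, since the substantive counting has already been carried out in \cref{thm:width_uniform}. The only point requiring a little care is that the conclusion is stated for all four representations at once, which is precisely why I would route the argument through the common minor-counting bound rather than through any individual BDD or ZDD.
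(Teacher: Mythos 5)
Your proposal is correct and follows essentially the same route as the paper: invoke the minor-counting bound together with \cref{thm:width_uniform} to bound the \(i\)th width by \(\confun_{M}\pqty{\ei} + 1\), and observe that for a uniform matroid \(\confun_{M}\pqty{\ei}\) depends only on \(i\), so \(\max_i \confun_{M}\pqty{\ei}\) equals \(\pathwidth\pqty{M}\) for every order. The paper's proof is just a more compressed version of this; your explicit justification of the order-independence of the connectivity function is the (correct) content it leaves implicit.
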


\begin{proof}
  The pathwidth of \(M\) is \(\max\set{\min\Bqty{i, \rankm\pqty{M}} + \min\Bqty{n - i, \rankm\pqty{M}} - \rankm\pqty{M}}{i \in \Bqty{0, \ldots, n - 1}}\).
  By \cref{thm:width_uniform}, the width of the BDD/ZDD for \(M\) with respect to \(\pqty{\gset, \preceq}\) is at most \(\max\set{\confun_{M}\pqty{\ei} + 1}{i \in \Bqty{0, \ldots, n - 1}} = \pathwidth\pqty{M} + 1\).
\end{proof}

The pathwidth of a free matroid is clearly zero.
Thus we obtain the following since all free matroids are uniform:

\begin{corollary}
  Let \(M\) be a free matroid on the ground set \(\gset\).
  For all total orders \(\preceq\) on \(\gset\), the width of the BDDs/ZDDs for \(M\) with respect to \(\pqty{\gset, \preceq}\) is at most one.
\end{corollary}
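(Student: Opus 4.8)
The plan is to reduce to the uniform case, exactly mirroring the derivation of \cref{cor:width_free} from \cref{thm:width_uniform}. First I would note that every free matroid is uniform: if \(M\) is free on \(\gset\), then its collection of independent sets is \(2^{\gset} = \set{I \subseteq \gset}{\abs{I} \leq \abs{\gset}}\), so \(M\) is the uniform matroid \(U_{\abs{\gset}, \abs{\gset}}\) of rank \(\rankm\pqty{M} = \abs{\gset}\). Consequently the preceding theorem for uniform matroids applies to \(M\), yielding that for every total order \(\preceq\) on \(\gset\) the width of the BDDs/ZDDs for \(M\) is at most \(\pathwidth\pqty{M} + 1\).

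It then remains to evaluate \(\pathwidth\pqty{M}\). Since \(M\) is free, for every \(X \subseteq \gset\) we have \(\confun_{M}\pqty{X} = \rankm_{M}\pqty{X} + \rankm_{M}\pqty{\gset \setminus X} - \rankm\pqty{M} = \abs{X} + \abs{\gset \setminus X} - \abs{\gset} = 0\), as already observed just before \cref{cor:width_free}. Hence \(\max\set{\confun_{M}\pqty{\ei}}{i \in \Bqty{0, \ldots, \abs{\gset} - 1}} = 0\) for any order, and taking the minimum over all orders gives \(\pathwidth\pqty{M} = 0\). Combining this with the bound from the previous paragraph yields width at most \(0 + 1 = 1\), which is the claim.

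The hard part will be essentially nonexistent: the statement is an immediate corollary of the uniform-matroid width bound together with two facts already established in the surrounding text, namely that free matroids are uniform and that their connectivity function is identically zero. The only point worth stating explicitly is that the pathwidth of \(M\) equals \(0\) irrespective of the chosen order, which is precisely what forces the uniform bound \(\pathwidth\pqty{M} + 1\) down to \(1\) simultaneously for all total orders.
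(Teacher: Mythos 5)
Your proposal is correct and follows exactly the paper's own route: the paper likewise derives this corollary by observing that every free matroid is uniform, invoking the preceding theorem bounding the width by \(\pathwidth\pqty{M} + 1\) for uniform matroids under any total order, and noting that \(\confun_{M}\) vanishes identically for a free matroid so that \(\pathwidth\pqty{M} = 0\). Your explicit computation of the connectivity function and of the pathwidth merely spells out what the paper states in one line (``The pathwidth of a free matroid is clearly zero''), so there is nothing to add.
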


The last theorem and corollary consider any total order on \(\gset\).
The following theorems show that a certain total order on \(\gset\) can ensure that the width of BDDs/ZDDs is at most \(\pathwidth\pqty{M} + 1\).

\begin{theorem}\label{thm:pathwidth_partition}
  Let \(M\) be a partition matroid on the ground set \(\gset\).
  There exists a total order \(\preceq^\ast\) on \(\gset\) such that the width of the BDD/ZDD for \(M\) with respect to \(\pqty{\gset, \preceq^\ast}\) is at most \(\pathwidth\pqty{M} + 1\).
\end{theorem}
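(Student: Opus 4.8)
The plan is to combine the explicit minor count for partition matroids from the proof of \cref{thm:width_partition} with a carefully chosen order that collapses the product bound to a single summand. Write $M = U_{r_1, n_1} \oplus \cdots \oplus U_{r_k, n_k}$ and let $G_1, \ldots, G_k$ be the ground sets of the uniform summands. Because ranks add across a direct sum, $\confun_M(X) = \sum_{j=1}^k \confun_{U_{r_j, n_j}}(X \cap G_j)$. By \cref{lem:direct_sum_width,thm:width_uniform}, the number of minors of $M$ on $\eivar$ equals $\prod_{j=1}^k \pqty{\confun_{U_{r_j, n_j}}(\ei \cap G_j) + 1}$, and this quantity bounds the $i$th width of each of the four BDDs/ZDDs for $M$. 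The difficulty is that this product is generally exponential in $\confun_M(\ei)$, so a generic order yields only the bound $2^{\confun_M(\ei)}$ of \cref{thm:width_partition}; the whole point of the good order is to make at most one factor exceed $1$.

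The order $\preceq^\ast$ I would take is the \emph{block-sequential} order: list all of $G_1$, then all of $G_2$, and so on, with an arbitrary internal order inside each block. The key observation is that, under $\preceq^\ast$, any prefix $\ei$ contains every block either fully or not at all, except for the single block $G_s$ currently being processed. Since $\confun_{U_{r,n}}(\emptyset) = \confun_{U_{r,n}}(G) = 0$, all factors but one collapse, and the product equals $\confun_{U_{r_s, n_s}}(\ei \cap G_s) + 1 = \confun_M(\ei) + 1$. Hence under $\preceq^\ast$ the $i$th width is at most $\confun_M(\ei) + 1 \le \max_i \confun_M(\ei) + 1$.

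It then remains to bound $\max_i \confun_M(\ei)$ under $\preceq^\ast$ by $\pathwidth(M)$. A direct evaluation of the tent-shaped function gives $\max_t \confun_{U_{r, n}}(\text{$t$-subset}) = \min\{r, n - r\}$, so under $\preceq^\ast$ one has $\max_i \confun_M(\ei) = \max_j \min\{r_j, n_j - r_j\}$, the maximum over blocks since at most one block is active at any prefix. For the matching lower bound on pathwidth I would argue as follows for an \emph{arbitrary} order: choose the block $j^\ast$ attaining $\max_j \min\{r_j, n_j - r_j\}$ and track $\abs{\ei \cap G_{j^\ast}}$ as $i$ runs from $0$ to $n$. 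This quantity starts at $0$, ends at $n_{j^\ast}$, and changes by at most $1$ at each step, so by a discrete intermediate-value argument it equals $\min\{r_{j^\ast}, n_{j^\ast} - r_{j^\ast}\}$ at some prefix, where $\confun_M(\ei) \ge \confun_{U_{r_{j^\ast}, n_{j^\ast}}}(\ei \cap G_{j^\ast}) = \min\{r_{j^\ast}, n_{j^\ast} - r_{j^\ast}\}$. Thus every order satisfies $\max_i \confun_M(\ei) \ge \max_j \min\{r_j, n_j - r_j\}$, giving $\pathwidth(M) \ge \max_j \min\{r_j, n_j - r_j\}$ and therefore width $\le \max_j \min\{r_j, n_j - r_j\} + 1 \le \pathwidth(M) + 1$.

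The main obstacle I anticipate is conceptual rather than computational: recognizing that the exponential product bound is entirely an artifact of letting several uniform blocks be "in progress" simultaneously, and that the block-sequential order removes exactly this inefficiency while remaining optimal for pathwidth. Once that order is fixed, the two remaining ingredients—the tent-function evaluation and the intermediate-value lower bound on $\abs{\ei \cap G_{j^\ast}}$—are routine, and the equality $\pathwidth(M) = \max_j \min\{r_j, n_j - r_j\}$ falls out as a byproduct.
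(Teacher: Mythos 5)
Your proof is correct and follows essentially the same route as the paper: the block-sequential order, the collapse of the product bound from \cref{lem:direct_sum_width,thm:width_uniform} to the single active uniform factor (since \(\confun_{U_{r,n}}(\emptyset)=\confun_{U_{r,n}}(G)=0\)), and an intermediate-value argument on \(\abs{\ei \cap G_{j^\ast}}\) for the pathwidth lower bound. The only cosmetic difference is that the paper's lower bound picks the prefix where \(\abs{\ei \cap G_p}=r_p\) rather than where it equals \(\min\{r_p, n_p-r_p\}\); both yield \(\pathwidth(M)\ge\min\{r_p,n_p-r_p\}\).
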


\begin{proof}
  It suffices to consider the case when \(\rankm\pqty{M} \geq 1\).
  There exist uniform matroids \(U_{r_1, n_1}, \ldots, U_{r_k, n_k}\) such that \(M = U_{r_1, n_1} \oplus \cdots \oplus U_{r_k, n_k}\), where \(k \geq 1\) and \(n = \sum_{j = 1}^k n_j\).
  For \(j \in \Bqty{1, \ldots, k}\), let \(\gset^{\pqty{j}} \coloneqq \Bqty{e_{j, 1}, \ldots, e_{j, n_j}}\) as the ground set of \(U_{r_j, n_j}\) and \(p \coloneqq \argmax_{j \in \Bqty{1, \ldots, k}} \Bqty{n_j - r_j, r_j}\).

  First, we show the lower bound on the pathwidth of \(M\).
  For any total order \(\preceq\) on \(\gset\), there exists \(i \in \Bqty{1, \ldots, n}\) such that \(\abs{\ei \cap \gset^{\pqty{p}}} = r_p\).
  Thus we have \(\pathwidth\pqty{M} \geq r_p + \min\Bqty{n_p - r_p, r_p} - r_p = \min\Bqty{n_p - r_p, r_p}\).

  Next, we can define the total order \(\preceq^\ast\) on \(\gset\) such that \(e_{1, 1} \prec^\ast \cdots \prec^\ast e_{1, n_1} \prec^\ast e_{2, 1} \prec^\ast \cdots \prec^\ast e_{2, n_2} \prec^\ast \cdots \prec^\ast e_{k, 1} \prec^\ast \cdots \prec^\ast e_{k, n_k}\).
  By \cref{thm:width_uniform,lem:direct_sum_width}, the width of the BDD/ZDD for \(M\) with respect to \(\pqty{\gset, \preceq^\ast}\) is at most
  \[
    \begin{split}
      &\max\set{\confun_{U_{r_j, n_j}}\pqty{\Bqty{e_{j, 1}, \ldots, e_{j, i}}} + 1}{\text{\(i \in \Bqty{0, \ldots, n_j - 1}\) and \(j \in \Bqty{1, \ldots, k}\)}} \\
      &= \max\set{\min\Bqty{i, r_j} + \min\Bqty{n_j - i, r_j} - r_j + 1}{\text{\(i \in \Bqty{0, \ldots, n_j - 1}\) and \(j \in \Bqty{1, \ldots, k}\)}} \\
      &= \max\set{\min\Bqty{i - r_j, 0} + \min\Bqty{n_j - i, r_j} + 1}{\text{\(i \in \Bqty{0, \ldots, n_j - 1}\) and \(j \in \Bqty{1, \ldots, k}\)}} \\
      &= \max\set{\min\Bqty{n_j - r_j, r_j}}{j \in \Bqty{1, \ldots, k}} + 1
      = \min\Bqty{n_p - r_p, r_p} + 1\text{.}\qedhere
    \end{split}
  \]
\end{proof}

\begin{theorem}\label{thm:pathwidth_nested}
  Let \(M\) be a nested matroid on the ground set \(\gset\) and \(\leftjustified\) be a left-justified order of \(M\).
  Then the width of the BDD/ZDD for \(M\) with respect to \(\pqty{\gset, \leftjustified}\) is at most \(\pathwidth\pqty{M} + 1\).
\end{theorem}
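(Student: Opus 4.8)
The plan is to bound each level width separately and take the maximum. Since the width of a BDD/ZDD for $M$ equals the maximum over $i$ of its $i$th width, and each $i$th width is at most the number of minors of $M$ on $\eivar[\leftjustified]$ (as discussed at the beginning of \cref{sec:width}), it suffices to bound this number of minors. By \cref{lem:order_in_nested} that number equals the size of $\set{\galebasis\pqty{\minor{M}{\ei[\leftjustified] \setminus X}{X}, \leftjustified}}{X \subseteq \ei[\leftjustified]}$, and by the sandwiching established in the proof of \cref{thm:width_nested} every such Gale basis satisfies
\[
  \galebasis\pqty{\contraction{M}{\ei[\leftjustified]}, \leftjustified} \subseteq \galebasis\pqty{\minor{M}{\ei[\leftjustified] \setminus X}{X}, \leftjustified} \subseteq \galebasis\pqty{\deletion{M}{\ei[\leftjustified]}, \leftjustified}\text{,}
\]
an interval under inclusion of height $\abs{\galebasis\pqty{\deletion{M}{\ei[\leftjustified]}, \leftjustified}} - \abs{\galebasis\pqty{\contraction{M}{\ei[\leftjustified]}, \leftjustified}} = \confun_{M}\pqty{\ei[\leftjustified]}$.

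The first key step is to show that these Gale bases form a chain under inclusion; since a chain inside an interval of height $\confun_{M}\pqty{\ei[\leftjustified]}$ has at most $\confun_{M}\pqty{\ei[\leftjustified]} + 1$ members, this yields that the $i$th width is at most $\confun_{M}\pqty{\ei[\leftjustified]} + 1$, an exponential improvement on the $2^{\confun_{M}\pqty{\ei[\leftjustified]}}$ of \cref{thm:width_nested}. To prove the chain property I would show that $\galebasis\pqty{\minor{M}{\ei[\leftjustified] \setminus X}{X}, \leftjustified}$ is determined by the rank of the minor: concretely, it should equal $\galebasis\pqty{\contraction{M}{\ei[\leftjustified]}, \leftjustified}$ together with the largest elements of $\galebasis\pqty{\deletion{M}{\ei[\leftjustified]}, \leftjustified} \setminus \galebasis\pqty{\contraction{M}{\ei[\leftjustified]}, \leftjustified}$, in number equal to the minor's rank minus $\abs{\galebasis\pqty{\contraction{M}{\ei[\leftjustified]}, \leftjustified}}$. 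As these sets of largest elements are nested as the rank grows, the Gale bases then form a chain. I would establish this by induction along the prefix $e_1, \ldots, e_i$, invoking the single-element contraction/deletion computations in the proof of \cref{thm:width_nested} to track how the Gale basis moves monotonically within the tail as each prefix element is contracted or deleted.

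The second key step is to show that the left-justified order attains the pathwidth, namely $\max_{i} \confun_{M}\pqty{\ei[\leftjustified]} = \pathwidth\pqty{M}$. For this I would prove that the prefix $\ei[\leftjustified]$ \emph{minimizes} the connectivity function among all $i$-element subsets, i.e.\ $\confun_{M}\pqty{Y} \ge \confun_{M}\pqty{\ei[\leftjustified]}$ for every $Y \subseteq \gset$ with $\abs{Y} = i$. Granting this, for an arbitrary total order $\preceq'$ we have $\confun_{M}\pqty{\gset_{\preceq', i}} \ge \confun_{M}\pqty{\ei[\leftjustified]}$ at every level, hence $\max_i \confun_{M}\pqty{\gset_{\preceq', i}} \ge \max_i \confun_{M}\pqty{\ei[\leftjustified]}$; taking the minimum over $\preceq'$ gives $\pathwidth\pqty{M} \ge \max_i \confun_{M}\pqty{\ei[\leftjustified]}$, while the reverse inequality is immediate from the definition of pathwidth. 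The minimality of the prefix follows from the left-justified (shifted) structure: any $i$-set can be turned into $\ei[\leftjustified]$ by repeatedly replacing an element with a $\leftjustified$-smaller one, and such a shifting exchange never increases $\rankm_{M}\pqty{Y} + \rankm_{M}\pqty{\gset \setminus Y}$, since the left-justified order makes the front of the ground set maximally independent (so that $\rankm_{M}\pqty{\ei[\leftjustified]} = \min\Bqty{i, \rankm\pqty{M}}$) and the back correspondingly maximally dependent.

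Combining the two steps, the width of the BDD/ZDD for $M$ with respect to $\pqty{\gset, \leftjustified}$ is at most $\max_i\pqty{\confun_{M}\pqty{\ei[\leftjustified]} + 1} = \pathwidth\pqty{M} + 1$. The main obstacle is the chain property of the first step: \cref{thm:width_nested} only confines the Gale bases to a Boolean interval, and collapsing this to a linear chain of length $\confun_{M}\pqty{\ei[\leftjustified]} + 1$ requires the finer, order-sensitive analysis of how contractions and deletions of prefix elements act on the Gale basis; the shifting argument underlying the minimality of the prefix is the other ingredient that demands care.
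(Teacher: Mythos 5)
Your first step is essentially the paper's argument: the $i$th width is bounded by the number of Gale bases of minors on $\eivar[\leftjustified]$, and for prefix deletions/contractions every such Gale basis is the suffix $\Bqty{g_{\rankm\pqty{M}-r'+1},\ldots,g_{\rankm\pqty{M}}}$ of $\galebasis\pqty{M,\leftjustified}$ determined by the minor's rank $r'$, so these form a chain and the count is $\rankm\pqty{\deletion{M}{\ei[\leftjustified]}}-\rankm\pqty{\contraction{M}{\ei[\leftjustified]}}+1=\confun_{M}\pqty{\ei[\leftjustified]}+1$. The one point you should not gloss over is that the deletion case in the proof of \cref{thm:width_nested} can move the Gale basis off the suffix when the deleted element lies in it; for the prefix element $e_1$ this is avoided because $e_1\in\galebasis\pqty{M,\leftjustified}$ forces $e_1$ to be a coloop, whence deletion equals contraction.

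Your second step has a genuine gap: the claimed levelwise minimality of the prefix, $\confun_{M}\pqty{Y}\geq\confun_{M}\pqty{\ei[\leftjustified]}$ for all $i$-element $Y$, is false, and so is the underlying assertion that replacing an element of $Y$ by a $\leftjustified$-smaller one never increases $\rankm_{M}\pqty{Y}+\rankm_{M}\pqty{\gset\setminus Y}$. Consider the nested matroid on $\Bqty{e_1,e_2,e_3}$ with presentation $\pqty{\Bqty{e_1},\Bqty{e_1,e_2,e_3}}$, whose bases are $\Bqty{e_1,e_2}$ and $\Bqty{e_1,e_3}$; the order $e_1\leftjustified e_2\leftjustified e_3$ is left-justified. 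Then $\confun_{M}\pqty{\Bqty{e_1,e_2}}=2+1-2=1$ while $\confun_{M}\pqty{\Bqty{e_2,e_3}}=1+1-2=0$, so the $2$-element prefix is not a minimizer, and the shift from $\Bqty{e_2,e_3}$ to $\Bqty{e_1,e_2}$ (replacing $e_3$ by the smaller $e_1$) increases $\rankm_{M}\pqty{Y}+\rankm_{M}\pqty{\gset\setminus Y}$ from $2$ to $3$. The intuition is in fact backwards: making the prefix maximally independent makes $\rankm_{M}\pqty{\ei[\leftjustified]}$ as large as possible, which pushes $\confun_{M}$ up, not down. What is true --- and what the paper proves --- is only the weaker statement that $\max_i\confun_{M}\pqty{\ei[\leftjustified]}$ equals a quantity $p\coloneqq\abs{\set{j}{e_{\rankm\pqty{M}}\leftjustified*g_j}}$ that every total order is forced to attain at some level: for any order $\preceq$ there is a prefix containing exactly $p$ of the $2p$ elements $e_{\rankm\pqty{M}-p+1},\ldots,e_{\rankm\pqty{M}+p}$, and a direct lower bound on the ranks of that prefix and its complement yields $\pathwidth\pqty{M}\geq p$. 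Your shifting argument would have to be replaced by an argument of this kind; as stated, it does not go through.
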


\begin{proof}
  It suffices to consider the case when \(\rankm\pqty{M} \geq 1\).
  Let \(\gset \coloneqq \Bqty{e_1, \ldots, e_n}\) such that \(e_1 \leftjustified* \cdots \leftjustified* e_n\) and \(\galebasis\pqty{M, \leftjustified} \coloneqq \Bqty{g_1, \ldots, g_{\rankm\pqty{M}}}\) such that \(g_1 \leftjustified* \cdots \leftjustified* g_{\rankm\pqty{M}}\).
  Here \(\galebasis\pqty{M, \leftjustified}\) is the Gale basis of \(M\) with respect to \(\leftjustified\) (see \cref{sec:width_with_lambda}).
  Moreover, let \(p \coloneqq \abs{\set{j \in \Bqty{1, \ldots, \rankm\pqty{M}}}{e_{\rankm\pqty{M}} \leftjustified* g_j}}\).

  First, we show that \(\pathwidth\pqty{M} \geq p\).
  Note that \(1 \leq \rankm\pqty{M} - p + 1 \leq \rankm\pqty{M} + p \leq n\) if \(p \geq 1\).
  For any total order \(\preceq\) on \(\gset\), there exists \(i \in \Bqty{0, \ldots, n - 1}\) such that \(\abs{\ei \cap \Bqty{e_{\rankm\pqty{M} - p + 1}, \ldots, e_{\rankm\pqty{M} + p}}} = p\).
  Since \(e_j \leftjustified g_j\) for \(j \in \Bqty{1, \ldots, \rankm\pqty{M} - p}\) and \(e_j \leftjustified g_{j - p}\) for \(j \in \Bqty{\rankm\pqty{M} + 1, \ldots, \rankm\pqty{M} + p}\), we have \(\rankm_{M}\pqty{\ei} \geq \abs{\ei \cap \Bqty{e_1, \ldots, e_{\rankm\pqty{M} - p}}} + p\).
  Similarly, we have \(\rankm_{M}\pqty{\eivar} \geq \abs{\pqty{\eivar} \cap \Bqty{e_1, \ldots, e_{\rankm\pqty{M} - p}}} + p\).
  Thus we can show that \(\pathwidth\pqty{M} \geq \abs{\Bqty{e_1, \ldots, e_{\rankm\pqty{M} - p}}} + 2p - \rankm\pqty{M} = p\).

  Next, we find the upper bound on the width of the BDD/ZDD for \(M\) with respect to \(\pqty{\gset, \leftjustified}\).
  By \cref{lem:order_in_nested}, the size of \(\set{\galebasis\pqty{\minor{M}{\pqty{\ei[\leftjustified] \setminus X}}{X}, \leftjustified}}{X \subseteq \ei[\leftjustified]}\) bounds the \(i\)th width.
  Now we show that the Gale basis of \(\minor{M}{\pqty{\ei[\leftjustified] \setminus X}}{X}\) equals \(\Bqty{g_{\rankm\pqty{M} - \rankm\pqty{\minor{M}{\pqty{\ei[\leftjustified] \setminus X}}{X}} + 1}, \ldots, g_{\rankm\pqty{M}}}\) for all \(X \subseteq \ei[\leftjustified]\).
  It suffices to show that the Gale basis of \Cref{item:contraction_in_pw}~\(\contraction{M}{\Bqty{e_1}}\) is \(\Bqty{g_{\rankm\pqty{M} - \rankm\pqty{\contraction{M}{\Bqty{e_1}}} + 1}, \ldots, g_{\rankm\pqty{M}}}\) and of \cref{item:deletion_in_pw}~\(\deletion{M}{\Bqty{e_1}}\) is \(\Bqty{g_{\rankm\pqty{M} - \rankm\pqty{\deletion{M}{\Bqty{e_1}}} + 1}, \ldots, g_{\rankm\pqty{M}}}\).
  \begin{enumerate}[label=(\roman*)]
    \item\label{item:contraction_in_pw}
      Since \(e_1 \leftjustified g_1\), we have \(\rankm\pqty{\contraction{M}{\Bqty{e_1}}} = \rankm\pqty{M} - 1\).
      The same arguments as in \cref{item:contraction_in_thm} of the proof of \cref{thm:width_nested} can be applied.
    \item\label{item:deletion_in_pw}
      If \(e_1\) is a coloop, then it follows from \cref{item:contraction_in_pw} since \(\deletion{M}{\Bqty{e_1}} = \contraction{M}{\Bqty{e_1}}\).
      If \(e_1\) is neither a loop nor a coloop, then we have \(e_1 \prec^\ast g_1\).
      The set \(\galebasis\pqty{M, \leftjustified}\) is the Gale basis of \(\deletion{M}{\Bqty{e_1}}\) by the same arguments as in \cref{item:deletion_in_thm} of the proof of \cref{thm:width_nested}.
  \end{enumerate}

  Therefore, the \(i\)th width is at most
  \[
    \begin{split}
      \rankm\pqty{\deletion{M}{\ei[\leftjustified]}} - \rankm\pqty{\contraction{M}{\ei[\leftjustified]}} + 1
      &= \abs{\set{j \in \Bqty{1, \ldots, \rankm\pqty{M}}}{e_{i + 1} \leftjustified g_j}} - \pqty{\rankm\pqty{M} - \min\Bqty{i, \rankm\pqty{M}}} + 1 \\
      &= \abs{\set{j \in \Bqty{1, \ldots, \rankm\pqty{M}}}{e_{i + 1} \leftjustified g_j}} + \min\Bqty{i - \rankm\pqty{M}, 0} + 1 \text{,}
    \end{split}
  \]
  which reaches its maximum value \(p + 1\) when \(i = \min\Bqty{\rankm\pqty{M}, n - 1}\).
\end{proof}

\Cref{thm:pathwidth_partition,thm:pathwidth_nested} establish \cref{thm:intro_pathwidth}.

\subsection{Application: implementation of a rank oracle}\label{sec:rank_oracle}

Matroids are often represented as oracles in algorithms on matroids (see \cite{robinson_welsh_1980,Hausmann1981}).
This section shows how to implement matroid oracles, such as a rank oracle, for a matroid \(M\) by using the property that the 0-child (resp. 1-child) of the ZDD \(\zdd{\fun{\isets}{M}}\) which has the root with label \(e \in \fun{\gset}{M}\) represents the collection of independent sets of \(\deletion{M}{\Bqty{e}}\) (resp. \(\contraction{M}{\Bqty{e}}\)).

One of the matroid oracles is an independence oracle.
The \emph{independence oracle} for \(M\) determines whether a given set \(I \subseteq \fun{\gset}{M}\) is independent in \(M\).
Since membership queries to a ZDD can be answered in time proportional to the size of its ground set, the ZDD \(\zdd{\fun{\isets}{M}}\) can simulate this oracle in \(\order\pqty{\abs{\fun{\gset}{M}}}\) time.
This is one of the reasons why we use BDDs/ZDDs as data structures for representing matroids.

\begin{algorithm}[tbp]
  \caption{Algorithm for a rank oracle}
  \label{alg:rank_oracle}
  \begin{algorithmic}[1]
    \Require A ZDD \(\zdd{\isets}\) for a collection \(\isets\) of independent sets of a matroid \(M\) with respect to a totally ordered set \(\pqty{\fun{\gset}{M}, \preceq}\) and a set \(X \subseteq \fun{\gset}{M}\)
    \Ensure The rank \(\rankm_{M}\pqty{X}\) of \(X\)
    \Function{Rank}{$\zdd{\isets}, X$}
      \State \IfThen{\(\rootdd\pqty{\zdd{\isets}} = \top \algorithmicor X = \emptyset\)}{\Return \(0\)}
      \State \(e \coloneqq \labelroot{\zdd{\isets}}\) \Comment{The element \(e\) exists since \(\rootdd\pqty{\zdd{\isets}}\) is not terminal.}
      \State \IfThen{\(e \prec \min{X}\)}{\Return $\Call{Rank}{\childz\pqty{\zdd{\isets}}, X}$} \Comment{Calculate \(\rankm_{\deletion{M}{\Bqty{e}}}\pqty{X}\).}
      \State \algorithmicelse\ \IfThen{\(e \succ \min{X}\)}{\Return \Call{Rank}{$\zdd{\isets}, X \setminus \Bqty{\min{X}}$}} \Comment{\(\min{X}\) is a loop of \(M\).}
      \State \algorithmicelse\ \Return $\Call{Rank}{\childo\pqty{\zdd{\isets}}, X \setminus \Bqty{e}} + 1$ \Comment{Calculate \(\rankm_{\contraction{M}{\Bqty{e}}}\pqty{X \setminus \Bqty{e}} + 1\).}
    \EndFunction
  \end{algorithmic}
\end{algorithm}

Another oracle is a rank oracle.
The \emph{rank oracle} for \(M\) provides the rank \(\rankm_{M}\pqty{X}\) of a given set \(X \subseteq \fun{\gset}{M}\).
Simulating this oracle in a computer usually requires \(\order\pqty{\abs{X}}\) probes of the independence oracle for \(M\) (see \cite[Proposition 1(3)]{robinson_welsh_1980} and \cite[Proposition 2.1]{Hausmann1981}) or an \(\order\pqty{2^{\abs{\fun{\gset}{M}}}}\) space preprocessing.
However, the ZDD \(\zdd{\fun{\isets}{M}}\) can simulate the rank oracle in \(\order\pqty{\abs{\fun{\gset}{M}}}\) time as shown in \cref{alg:rank_oracle}.
This algorithm is based on the following (see \cite[Proposition 3.1.6]{10.1093/acprof:oso/9780198566946.001.0001}):
\[
  \rankm_{M}\pqty{X} = \begin{cases}
    \rankm_{\deletion{M}{\Bqty{e}}}\pqty{X} & \text{if \(e \notin X\),} \\
    \rankm_{\contraction{M}{\Bqty{e}}}\pqty{X \setminus\Bqty{e}} + \rankm_{M}\pqty{\Bqty{e}} & \text{if \(e \in X\),} \\
  \end{cases}
\]
for all \(e \in \fun{\gset}{M}\), which allows us to reduce the calculation to a case with a smaller ground set.
Other basic oracles for \(M\) can be simulated in \(\order\pqty{\abs{\fun{\gset}{M}}}\) or \(\order\pqty{\abs{\fun{\gset}{M}}^2}\) time by using this rank oracle.

\section*{Acknowledgments}
\addcontentsline{toc}{section}{Acknowledgments}

The second author was supported by JSPS KAKENHI Grant Number JP22K17854.
The third author was supported by JSPS KAKENHI Grant Numbers JP20H00605, JP20H05964.

\bibliographystyle{abbrvnat}
\bibliography{references}

\begin{thebibliography}{31}
\providecommand{\natexlab}[1]{#1}
\providecommand{\url}[1]{\texttt{#1}}
\expandafter\ifx\csname urlstyle\endcsname\relax
  \providecommand{\doi}[1]{doi: #1}\else
  \providecommand{\doi}{doi: \begingroup \urlstyle{rm}\Url}\fi

\bibitem[Ardila(2003)]{ARDILA200349}
F.~Ardila.
\newblock The {C}atalan matroid.
\newblock \emph{Journal of Combinatorial Theory, Series A}, 104\penalty0 (1):\penalty0 49--62, 2003.

\bibitem[Bansal et~al.(2015)Bansal, Pendavingh, and van~der Pol]{Bansal2015}
N.~Bansal, R.~A. Pendavingh, and J.~G. van~der Pol.
\newblock On the number of matroids.
\newblock \emph{Combinatorica}, 35\penalty0 (3):\penalty0 253--277, 2015.

\bibitem[Billera et~al.(2009)Billera, Jia, and Reiner]{BILLERA20091727}
L.~J. Billera, N.~Jia, and V.~Reiner.
\newblock A quasisymmetric function for matroids.
\newblock \emph{European Journal of Combinatorics}, 30\penalty0 (8):\penalty0 1727--1757, 2009.

\bibitem[Bonin(2024)]{bonin2024characterization}
J.~E. Bonin.
\newblock A characterization of positroids, with applications to amalgams and excluded minors, 2024.

\bibitem[Bonin and de~Mier(2006)]{BONIN2006701}
J.~E. Bonin and A.~de~Mier.
\newblock Lattice path matroids: structural properties.
\newblock \emph{European Journal of Combinatorics}, 27\penalty0 (5):\penalty0 701--738, 2006.

\bibitem[Bonin and de~Mier(2008)]{Bonin2008}
J.~E. Bonin and A.~de~Mier.
\newblock The lattice of cyclic flats of a matroid.
\newblock \emph{Annals of Combinatorics}, 12\penalty0 (2):\penalty0 155--170, 2008.

\bibitem[Bonin et~al.(2003)Bonin, de~Mier, and Noy]{BONIN200363}
J.~E. Bonin, A.~de~Mier, and M.~Noy.
\newblock Lattice path matroids: enumerative aspects and {T}utte polynomials.
\newblock \emph{Journal of Combinatorial Theory, Series A}, 104\penalty0 (1):\penalty0 63--94, 2003.

\bibitem[Bryant(1986)]{10.1109/TC.1986.1676819}
R.~E. Bryant.
\newblock Graph-based algorithms for boolean function manipulation.
\newblock \emph{IEEE Transactions on Computers}, 35\penalty0 (8):\penalty0 677--691, 1986.

\bibitem[Crapo and Schmitt(2005)]{CRAPO20051066}
H.~Crapo and W.~Schmitt.
\newblock A free subalgebra of the algebra of matroids.
\newblock \emph{European Journal of Combinatorics}, 26\penalty0 (7):\penalty0 1066--1085, 2005.

\bibitem[Crapo(1965)]{Crapo1965}
H.~H. Crapo.
\newblock Single-element extensions of matroids.
\newblock \emph{Journal of Research of the National Bureau of Standards, Section B: Mathematics and Mathematical Physics}, 69B\penalty0 (1--2):\penalty0 55--65, 1965.

\bibitem[Edmonds and Fulkerson(1965)]{Edmonds1965}
J.~Edmonds and D.~R. Fulkerson.
\newblock Transversals and matroid partition.
\newblock \emph{Journal of Research of the National Bureau of Standards, Section B: Mathematics and Mathematical Physics}, 69B\penalty0 (3):\penalty0 147--153, 1965.

\bibitem[Fife and Oxley(2017)]{FIFE2017206}
T.~Fife and J.~Oxley.
\newblock Laminar matroids.
\newblock \emph{European Journal of Combinatorics}, 62:\penalty0 206--216, 2017.

\bibitem[Funk et~al.(2022)Funk, Mayhew, and Newman]{Funk2022}
D.~Funk, D.~Mayhew, and M.~Newman.
\newblock Tree automata and pigeonhole classes of matroids: {I}.
\newblock \emph{Algorithmica}, 84\penalty0 (7):\penalty0 1795--1834, 2022.

\bibitem[Funk et~al.(2023)Funk, Mayhew, and Newman]{Funk2023}
D.~Funk, D.~Mayhew, and M.~Newman.
\newblock Tree automata and pigeonhole classes of matroids: {II}.
\newblock \emph{The Electronic Journal of Combinatorics}, 30\penalty0 (3), 2023.

\bibitem[Gale(1968)]{Gale1968}
D.~Gale.
\newblock Optimal assignments in an ordered set: an application of matroid theory.
\newblock \emph{Journal of Combinatorial Theory}, 4\penalty0 (2):\penalty0 176--180, 1968.

\bibitem[Geelen et~al.(2006)Geelen, Gerards, and Whittle]{GEELEN2006405}
J.~Geelen, B.~Gerards, and G.~Whittle.
\newblock On {R}ota's conjecture and excluded minors containing large projective geometries.
\newblock \emph{Journal of Combinatorial Theory, Series B}, 96\penalty0 (3):\penalty0 405--425, 2006.

\bibitem[Hausmann and Korte(1981)]{Hausmann1981}
D.~Hausmann and B.~Korte.
\newblock Algorithmic versus axiomatic definitions of matroids.
\newblock In \emph{Mathematical Programming at Oberwolfach}, pages 98--111. Springer, 1981.

\bibitem[Kr{\'{a}}{\v{l}}(2012)]{KRAL2012913}
D.~Kr{\'{a}}{\v{l}}.
\newblock Decomposition width of matroids.
\newblock \emph{Discrete Applied Mathematics}, 160\penalty0 (6):\penalty0 913--923, 2012.

\bibitem[Minato(1993)]{10.1145/157485.164890}
S.~Minato.
\newblock Zero-suppressed {BDD}s for set manipulation in combinatorial problems.
\newblock In \emph{Proceedings of the 30th International Design Automation Conference (DAC '93)}, pages 272--277, 1993.

\bibitem[Nakasawa(1935)]{23e983ed-3432-3b97-b3be-0381d88d5f62}
T.~Nakasawa.
\newblock Zur {A}xiomatik der linearen {A}bh{\"{a}}ngigkeit. {I}.
\newblock \emph{Science Reports of the Tokyo Bunrika Daigaku, Section A}, 2\penalty0 (43):\penalty0 235--255, 1935.

\bibitem[Nakasawa(1936{\natexlab{a}})]{37dbf464-6053-35fb-a8a6-c5f70977014e}
T.~Nakasawa.
\newblock Zur {A}xiomatik der linearen {A}bh{\"{a}}ngigkeit, {III} ({S}chluss).
\newblock \emph{Science Reports of the Tokyo Bunrika Daigaku, Section A}, 3\penalty0 (55):\penalty0 123--136, 1936{\natexlab{a}}.

\bibitem[Nakasawa(1936{\natexlab{b}})]{97ddae92-4833-382b-b222-4cea46100541}
T.~Nakasawa.
\newblock Zur {A}xiomatik der linearen {A}bh{\"{a}}ngigkeit. {II}.
\newblock \emph{Science Reports of the Tokyo Bunrika Daigaku, Section A}, 3\penalty0 (50--51):\penalty0 45--69, 1936{\natexlab{b}}.

\bibitem[Oxley(2011)]{10.1093/acprof:oso/9780198566946.001.0001}
J.~Oxley.
\newblock \emph{Matroid Theory}.
\newblock Oxford University Press, 2nd edition, 2011.

\bibitem[Oxley et~al.(1982)Oxley, Prendergast, and Row]{Oxley_Prendergast_Row_1982}
J.~Oxley, K.~Prendergast, and D.~Row.
\newblock Matroids whose ground sets are domains of functions.
\newblock \emph{Journal of the Australian Mathematical Society. Series A. Pure Mathematics and Statistics}, 32\penalty0 (3):\penalty0 380--387, 1982.

\bibitem[Robinson and Welsh(1980)]{robinson_welsh_1980}
G.~C. Robinson and J.~A.~D. Welsh.
\newblock The computational complexity of matroid properties.
\newblock \emph{Mathematical Proceedings of the Cambridge Philosophical Society}, 87\penalty0 (1):\penalty0 29--45, 1980.

\bibitem[Sekine et~al.(1995)Sekine, Imai, and Tani]{10.1007/BFb0015427}
K.~Sekine, H.~Imai, and S.~Tani.
\newblock Computing the {T}utte polynomial of a fraph of moderate size.
\newblock In \emph{Algorithms and Computations}, pages 224--233, 1995.

\bibitem[Seymour(1988)]{SEYMOUR198825}
P.~D. Seymour.
\newblock On the connectivity function of a matroid.
\newblock \emph{Journal of Combinatorial Theory, Series B}, 45\penalty0 (1):\penalty0 25--30, 1988.

\bibitem[Sohoni(1999)]{Sohoni1999}
M.~Sohoni.
\newblock Rapid mixing of some linear matroids and other combinatorial objects.
\newblock \emph{Graphs and Combinatorics}, 15\penalty0 (1):\penalty0 93--107, 1999.

\bibitem[Thomas(1996)]{thomas1996}
R.~Thomas.
\newblock Tree-decompositions of graphs (lecture notes).
\newblock School of Mathematics, Georgia Institute of Technology, 1996.
\newblock \url{https://thomas.math.gatech.edu/PAP/treenotes.pdf}.

\bibitem[Tutte(1966)]{tutte_1966}
W.~T. Tutte.
\newblock Connectivity in matroids.
\newblock \emph{Canadian Journal of Mathematics}, 18:\penalty0 1301--1324, 1966.

\bibitem[Whitney(1935)]{b2d08917-c364-3337-af1c-57f8ed647c1c}
H.~Whitney.
\newblock On the abstract properties of linear dependence.
\newblock \emph{American Journal of Mathematics}, 57\penalty0 (3):\penalty0 509--533, 1935.

\end{thebibliography}
\addcontentsline{toc}{section}{References}

\end{document}